\documentclass[reqno,english]{amsart}
\usepackage{amsfonts,amsmath,latexsym,verbatim,amscd,mathrsfs,color,array}
\usepackage[colorlinks=true]{hyperref}
\usepackage{amsmath,amssymb,amsthm,amsfonts,graphicx,color}
\usepackage{amssymb}
\usepackage{pdfsync}
\usepackage{epstopdf}
\usepackage{cite}

\newcommand{ \R} { \mathbb{R}}
\usepackage{graphicx}

\newtheorem{theorem}{Theorem}[section]
\newtheorem{lemma}[theorem]{Lemma}
\theoremstyle{remark}
\newtheorem{remark}[theorem]{Remark}
\newtheorem{proposition}[theorem]{Proposition}

\numberwithin{equation}{section}

\title[Classification of Blow-ups for Half Laplacian]{Classification of Blow-ups and Monotonicity Formula for Half Laplacian Nonlinear Heat Equation}

\author[B. Deng]{Bin Deng}
\address{\noindent School of Mathematical Sciences, University of Science and Technology of China, Hefei, Anhui Province, P.R. China, 230026}
\email{bingomat@mail.ustc.edu.cn}
\author[Y. Sire]{Yannick Sire}
\address{\noindent Department of Mathematics, Johns Hopkins University, 404 Krieger Hall, 3400 N. Charles Street, Baltimore, MD 21218, USA}
\email{sire@math.jhu.edu}
\author[J. Wei]{Juncheng Wei}
\address{\noindent Department of Mathematics, University of British Columbia, Vancouver, B.C., Canada, V6T 1Z2}
\email{jcwei@math.ubc.ca}
\author[K.Wu]{Ke Wu}
\address{\noindent School of Mathematics and Statistics, Xian Jiaotong University, Xian, Shanxi Province, P.R. China, 710049}
\email{wuke@stu.xjtu.edu.cn}

\begin{document}
\begin{abstract}
We consider the nonlinear half Laplacian heat equation
$$
    u_t+(-\Delta)^{\frac{1}{2}} u-|u|^{p-1}u=0,\quad \mathbb{R}^n\times (-T, 0).
    $$
    We prove that all blows-up are type I, provided that $n \leq 4$  and $ 1<p<p_{*} (n)$ where $ p_{*} (n)$ is an explicit exponent which is below $\frac{n+1}{n-1}$, the critical Sobolev  exponent. Central to our proof is a Giga-Kohn type monotonicity formula for half Laplacian and a Liouville type theorem for self-similar nonlinear heat equation. This is the first instance of a monotonicity formula at the level of the nonlocal equation, without invoking the extension to the half-space.

\end{abstract}
\maketitle

\tableofcontents

  \section{Introduction}
  In a series of seminal papers, Giga and Kohn \cite{GK, GK1, GK3} studied the asymptotic behavior of blow-up solutions to nonlinear heat equations with subcritical power nonlinearity:
  \begin{equation}\label{GK0}
\left\{\begin{array}{l}
u_t-\Delta u = |u|^{p-1} u, \ (x,t) \in \R^n\times(0,T)\\
u(x, 0)= u_0 (x)
\end{array}
\right.
\end{equation}
where $ 1<p <\frac{n+2}{n-2}$ for $ n\geq 3$ and $1< p<+\infty$ when $n=1,2$. We recall that the finite time blow up is said to be of type I if
$$ \limsup_{t\to T} (T-t)^{\frac{1}{p-1}}\|u(\cdot,t)\|_\infty<+\infty, $$
and of type II if
$$ \limsup_{t\to T} (T-t)^{\frac{1}{p-1}}\|u(\cdot,t)\|_\infty=+\infty,$$
where $T$  is the maximal existence time of the $L^{\infty}$ solution $u$.

In \cite{GK}, Giga-Kohn considered the equation
\begin{equation}\label{selfsimilar}
w_s-\Delta w + \frac{1}{2} y \cdot \nabla w + \frac{1}{p-1} w - |w|^{p-1} w=0,
\end{equation}
obtained from \eqref{GK0} by setting
$$w(y,s)=(-t)^{\frac{1}{p-1}}u(x,t),\quad x=(-t)^{\frac{1}{2}}y.$$
They proved that all bounded  global stationary solutions to \eqref{selfsimilar} are constants.  Then in \cite{GK1}, Giga and Kohn proved that all blow-ups of (\ref{GK0}) are Type I. In \cite{GK3}, Giga and Kohn showed that one can tell whether or not a point is a blow up point by examining the asymptotic behavior of a solution in a backward spacetime parabola. Moreover, they can give a local lower bound on the blow up rate. In \cite{MZ1} and \cite{MZ3}, Merle and Zaag classified all the bounded global nonnegative solutions to \eqref{selfsimilar} defined on $\mathbb{R}^n\times\mathbb{R}$.

Two central ingredients in Giga-Kohn's proof are: (1) a monotonicity formula with Gaussian weight for solutions of (\ref{selfsimilar}); (2) a weighted Pohozaev identity applied to steady states of (\ref{selfsimilar}). After these celebrated works, there have been many refined estimates, simplifications and applications. We refer to the papers \cite{FK, MZ1, MZ2, MZ3, PQS, S1} and the   book by Quittner and Souplet \cite{QS} for an up-to-date state of the art.

In this paper we initiate the attempt to generalize the Giga-Kohn program in the nonlocal setting. More precisely  we consider the following nonlinear half heat equation
  \begin{eqnarray}\label{halfHE}
    u_t+(-\Delta)^{\frac{1}{2}} u-|u|^{p-1}u=0,\quad \mathbb{R}^n\times (-T,
    0),
  \end{eqnarray}
  where $u$ is real-valued,  $p>1$, $0<T\leq\infty$ and $(-\Delta)^{-\frac{1}{2}}$ is  the half Laplacian.

In general, the fractional Laplacian $(-\Delta)^\alpha$, $\alpha\in(0,1)$,
  is defined in the following way,
  \begin{eqnarray}
    (-\Delta)^{\alpha} u(x):=c_{n,\alpha}P.V.
    \int_{\mathbb{R}^n}\frac{u(x)-u(x')}{|x-x'|^{n+2\alpha}}dx'.
  \end{eqnarray}
 The  normalizing constant is
  \begin{eqnarray}
    c_{n,\alpha}=\frac{2^{2\alpha}\Gamma(\frac{n+2\alpha}{2})}{\pi^{\frac{n}{2}}|\Gamma(-
    \alpha)|}
  \end{eqnarray}
  where $\Gamma(x)$ is the Gamma function. In our situation, $\alpha=\frac{1}{2}$, we denote
   \begin{eqnarray}\label{cn}
    c_n:=c_{n,\frac{1}{2}}=\frac{\Gamma(\frac{n+1}{2})}{\pi^{\frac{n+1}{2}}}.
  \end{eqnarray}
 Fractional Laplacian can also be defined as a pseudo-differential operator
  \begin{eqnarray*}
    \mathcal{F}( (-\Delta)^{\frac{1}{2}} u)(\xi)=|\xi|\mathcal{F}(u)(\xi)
  \end{eqnarray*}
  where $\mathcal{F}$ is defined by
  \begin{eqnarray*}
    \mathcal{F}(u)(\xi):=
    \int e^{-i x\cdot\xi}u(x)dx
  \end{eqnarray*}
  with $i$ the imaginary unit.

The kernel of the half heat equation
  \begin{eqnarray}
    u_t+(-\Delta)^{\frac{1}{2}}u=0
  \end{eqnarray}
  has an explicit expression (see e.g. \cite{CR})
  \begin{eqnarray}\label{halfheatkernel}
   P(x,t):=\mathcal{F}^{-1}( e^{- t|\xi|})=
    \frac{b_nt}{(t^2+|x|^2)^{\frac{n+1}{2}}},
  \end{eqnarray}
where
  \begin{eqnarray}
   b_n=\frac{\Gamma(\frac{n+1}{2})}{\pi^{\frac{n+1}{2}}}=\Big(\int
   \frac{ dx}{(1+|x|^2)^{\frac{n+1}{2}}}\Big)^{-1}.
   \end{eqnarray}
  We denote
  \begin{eqnarray}\label{explicit}
  \rho(x)=\frac{1}{b_n}P(x,1)=\frac{1}{(1+|x|^2)^{\frac{n+1}{2}}},
  \end{eqnarray}
    then
  \begin{eqnarray}\label{kernel}
    (-\Delta)^{\frac{1}{2}} \rho =n\rho +x\cdot\nabla\rho.
  \end{eqnarray}
Moreover, we have the following pointwise equality:
  \begin{eqnarray}\label{fradot}
    (-\Delta)^{\frac{1}{2}}(x\cdot\nabla\rho)=(-\Delta)^{\frac{1}{2}}\rho+
    x\cdot\nabla(-\Delta)^{\frac{1}{2}}\rho.
  \end{eqnarray}
 By \eqref{kernel} and \eqref{fradot}, we can get that
  \begin{eqnarray}\label{kerfradot}
    (-\Delta)^{\frac{1}{2}}(x\cdot\nabla\rho)&=&
    n\rho+(1+n)x\cdot\nabla\rho
    +x\cdot\nabla(x\cdot\nabla
    \rho).
  \end{eqnarray}
Furthermore, it is easy to see that
  \begin{eqnarray}
    x\cdot\nabla\rho(x)=-(n+1)\frac{|x|^2\rho}{1+|x|^2}\leq0.
  \end{eqnarray}
  For more results about fractional heat kernel, we refer to \cite{BG} and \cite{K}.

  In order to introduce our results, we define the quantity
\begin{eqnarray}\label{Mn}
 \begin{aligned}
  M_n
  :=&\ \sup_{y\in\mathbb{R}^n}\{\frac{1}{\rho(y)}
  \int_{\Omega_y}\frac{(\rho(y')-\rho(y))^2}{|y'-y|^{n+1}}\frac{1}{\rho(y')}dy'\}
  \\&\
  +\sup_{y\in\mathbb{R}^n}\{\frac{1}{\rho(y)^2}
  \int_{\mathbb{R}^n\setminus\Omega_y}\frac{(\rho(y')-\rho(y))^2}{|y'-y|^{n+1}}
  dy'\},
 \end{aligned}
\end{eqnarray}
where
\begin{eqnarray*}
  \Omega_y=B_{|y|}(0)=\{y'\in\mathbb{R}^n\ : \ |y'|<|y|\}.
\end{eqnarray*}
 We also define the exponent
\begin{equation}
\label{pn1}
p_*(n)=\frac{n+1-\frac{c_nM_n}{4}}{n-1+\frac{c_nM_n}{4}}
\end{equation}
where $c_n$ is defined by \eqref{cn}.
\begin{remark}
Some comments on the exponent $p_*(n)$ are in order. First a necessary condition for $p_*(n) >1$ is that $c_nM_n <4$. Some numerical computations show that
\begin{eqnarray*}
  c_2M_2\approx2.1498,\ \ c_3M_3\approx2.8406,\\
  c_4M_4\approx3.5561,\ \  c_5M_5\approx4.2839.
\end{eqnarray*}
As a consequence, our results hold {\sl only} for $n \leq 4$. Furthermore, it is easy to see that $p_*(n) < \frac{n+1}{n-1}$ for $n \geq 2$.
\end{remark}
\subsubsection*{Main results}
We first give a classification of backward \emph{self-similar} solutions. A \emph{self-similar solution} of (\ref{halfHE}) is of the form $$ u(x,t)= (-t)^{-\beta} w(\frac{x}{-t}),\quad\beta=\frac{1}{p-1},$$
   where $w$ satisfies
  \begin{eqnarray}\label{stableW}
    (-\Delta)^{\frac{1}{2}}w+y\cdot\nabla w+\beta w-|w|^{p-1}w=0, \ \text{in}\ \R^n.
  \end{eqnarray}
  It is easy to see that the only trivial solutions of \eqref{stableW} are $w\equiv0$ and $w\equiv \pm\beta^\beta$.

  Our first result is a classification of self-similar solution  for the semilinear  equation \eqref{halfHE}, which generalizes Theorem $1'$ in \cite{GK}.
\begin{theorem}\label{classifyselfsim}
  Let $n\leq4, 1< p\leq p_*(n)$ and let $u$ be a self-similar solution of (\ref{halfHE})
  satisfying the estimate
  \begin{eqnarray}
    \sup_{\mathbb{R}^n\times(-T,0)}(-t)^\beta|u(x,t)|<\infty,
  \end{eqnarray}
   then $u\equiv0$ or $u\equiv\pm \beta^\beta(-t)^{-\beta}$.
 \end{theorem}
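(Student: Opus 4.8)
The plan is to work directly with the profile $w$ solving \eqref{stableW}: the statement is equivalent to showing that a bounded solution $w$ of \eqref{stableW} is $0$ or $\pm\beta^{\beta}$. As a preliminary, standard interior regularity for \eqref{stableW} gives $w\in C^{1,\alpha}_{\mathrm{loc}}$, and one checks $(-\Delta)^{1/2}w\in L^{\infty}$; hence by \eqref{stableW} the quantity $y\cdot\nabla w=|w|^{p-1}w-\beta w-(-\Delta)^{1/2}w$ is bounded. Together with $w\in L^{\infty}$, $\rho(y)\sim|y|^{-(n+1)}$ and $|y\cdot\nabla\rho|\le(n+1)\rho$ (from \eqref{explicit}), this renders all the $\rho$-weighted quantities below absolutely convergent, and justifies the integrations by parts (the boundary terms at infinity decay like $R^{-1}$).

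\emph{First identity.} Testing \eqref{stableW} against $w\rho$: the drift term is $\int(y\cdot\nabla w)w\rho=-\tfrac12\int w^{2}\,\mathrm{div}(y\rho)=-\tfrac12\int w^{2}(-\Delta)^{1/2}\rho$ by \eqref{kernel}, while expanding the bilinear form of the fractional Laplacian and symmetrising gives $\int((-\Delta)^{1/2}w)(w\rho)=\mathcal D_{\rho}(w)+\tfrac12\int w^{2}(-\Delta)^{1/2}\rho$, where
$$\mathcal D_{\rho}(w):=\frac{c_{n}}{4}\iint\frac{(w(y)-w(y'))^{2}\big(\rho(y)+\rho(y')\big)}{|y-y'|^{n+1}}\,dy\,dy'\ \ge\ 0 .$$
The two copies of $\int w^{2}(-\Delta)^{1/2}\rho$ cancel — the nonlocal analogue of the cancellation behind Giga--Kohn's energy identity — leaving
\begin{equation}\label{propI}
\mathcal D_{\rho}(w)+\beta\int w^{2}\rho=\int|w|^{p+1}\rho .
\end{equation}

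\emph{Weighted fractional Pohozaev identity.} I would next test \eqref{stableW} against $(y\cdot\nabla w)\rho$. The term $\int(y\cdot\nabla w)^{2}\rho\ge0$ survives, and the zeroth order and nonlinear terms integrate (as above) to $-\tfrac{\beta}{2}\int w^{2}(-\Delta)^{1/2}\rho$ and $\tfrac1{p+1}\int|w|^{p+1}(-\Delta)^{1/2}\rho$. The crux is the fractional term $\int((-\Delta)^{1/2}w)(y\cdot\nabla w)\rho$. Writing $y\cdot\nabla w=\tfrac{d}{d\lambda}\big|_{\lambda=1}w(\lambda\,\cdot)$ and using the exact scaling $\mathcal D_{\rho}(w(\lambda\,\cdot))=\lambda^{1-n}\tfrac{c_{n}}{4}\iint\frac{(w(z)-w(z'))^{2}(\rho(z/\lambda)+\rho(z'/\lambda))}{|z-z'|^{n+1}}\,dz\,dz'$, one obtains after symmetrising
\begin{equation}\label{propPoh}
\int\big((-\Delta)^{1/2}w\big)(y\cdot\nabla w)\rho=\frac{1-n}{2}\,\mathcal D_{\rho}(w)-\frac12\mathcal E_{1}+\mathcal E_{0},
\end{equation}
where $\mathcal E_{1}:=\tfrac{c_{n}}{4}\iint\frac{(w(y)-w(y'))^{2}(y\cdot\nabla\rho(y)+y'\cdot\nabla\rho(y'))}{|y-y'|^{n+1}}\le0$ (since $y\cdot\nabla\rho\le0$ by \eqref{explicit}), and $\mathcal E_{0}:=\tfrac{c_{n}}{4}\iint\frac{(w(y)-w(y'))(y\cdot\nabla w(y)+y'\cdot\nabla w(y'))(\rho(y)-\rho(y'))}{|y-y'|^{n+1}}$ is the genuinely nonlocal commutator term. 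The decisive bound is $|\mathcal E_{0}|\le\sqrt{c_{n}M_{n}}\,\mathcal D_{\rho}(w)^{1/2}\big(\int(y\cdot\nabla w)^{2}\rho\big)^{1/2}$: Cauchy--Schwarz with the weight $\rho(y)+\rho(y')$ reduces this to controlling $\tfrac1{\rho(y)}\int\frac{(\rho(y)-\rho(y'))^{2}}{|y-y'|^{n+1}(\rho(y)+\rho(y'))}\,dy'$, and splitting the $y'$-integral over $\Omega_{y}=B_{|y|}(0)$ (where $\rho(y')\ge\rho(y)$, so $\rho(y)+\rho(y')\ge\rho(y')$) and over $\mathbb R^{n}\setminus\Omega_{y}$ (where $\rho(y)+\rho(y')\ge\rho(y)$) produces exactly the two suprema defining $M_{n}$ in \eqref{Mn}. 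The identities \eqref{fradot} and \eqref{kerfradot} are also used along the way to reorganise the various occurrences of $\int w^{2}(-\Delta)^{1/2}\rho$, $\int w^{2}(y\cdot\nabla\rho)$, etc.

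\emph{Closing the argument.} Inserting \eqref{propPoh} into the Pohozaev identity and adding $\tfrac{n}{p+1}$ times \eqref{propI} eliminates $\int|w|^{p+1}\rho$; after expanding $(-\Delta)^{1/2}\rho=n\rho+y\cdot\nabla\rho$ and using $\beta(p-1)=1$, one is left with an identity in which $\mathcal D_{\rho}(w)$ carries the coefficient $\tfrac{n}{p+1}-\tfrac{n-1}{2}$, all other contributions being either of a favourable sign (via $y\cdot\nabla\rho\le0$, $\mathcal E_{1}\le0$, $\int(y\cdot\nabla w)^{2}\rho\ge0$) or equal to $\mathcal E_{0}$, which I absorb using the estimate above and Young's inequality. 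The explicit exponent $p_{*}(n)$ in \eqref{pn1} is precisely the one for which $\tfrac{n}{p_{*}(n)+1}=\tfrac{n-1}{2}+\tfrac{c_{n}M_{n}}{8}$, so under $1<p\le p_{*}(n)$ the effective coefficient of $\mathcal D_{\rho}(w)$ is nonnegative (and, for $p<p_{*}(n)$, strictly larger than the absorbed error), which forces $\mathcal D_{\rho}(w)=0$; at the endpoint $p=p_{*}(n)$ one tracks the remaining strictly negative leftover terms to still conclude $y\cdot\nabla w\equiv0$. In either case $w$ is constant — if $\mathcal D_{\rho}(w)=0$ because the weight is strictly positive, and if $y\cdot\nabla w\equiv0$ because then $w$ is constant on rays and, being $C^{1}$, globally constant. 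Finally a constant $w$ in \eqref{stableW} gives $\beta w=|w|^{p-1}w$, so $w\equiv0$ or $w\equiv\pm\beta^{\beta}$, i.e. $u\equiv0$ or $u=\pm\beta^{\beta}(-t)^{-\beta}$.

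\emph{Main obstacle.} The hard part is the weighted fractional Pohozaev identity \eqref{propPoh}: extracting the exact main term $\tfrac{1-n}{2}\mathcal D_{\rho}(w)$, showing the error is controlled \emph{precisely} by $M_{n}$ (not a larger constant), and then in the closing step verifying that the zeroth order pieces $\int w^{2}(y\cdot\nabla\rho)$ and $\int|w|^{p+1}(y\cdot\nabla\rho)$ — which, together with the $M_{n}$-error, are not individually of good sign — reorganise so that the whole right-hand side becomes nonpositive. It is the crudeness of the Cauchy--Schwarz absorption of $\mathcal E_{0}$ that pushes the threshold $p_{*}(n)$ strictly below the Sobolev exponent $\tfrac{n+1}{n-1}$ and confines the theorem to $n\le4$.
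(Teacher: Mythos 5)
Your proposal is essentially the paper's own argument (Proposition \ref{pohozaev} combined with Theorem \ref{PW}): the same multipliers $w\rho$ and $(y\cdot\nabla\rho)w$ together with the scaling/Pohozaev multiplier $(y\cdot\nabla w)\rho$, the same splitting of the Cauchy--Schwarz absorption over $\Omega_y=B_{|y|}(0)$ and its complement producing exactly the constant $M_n$ of \eqref{Mn}, and the same threshold relation $\frac{n}{p_*+1}=\frac{n-1}{2}+\frac{c_nM_n}{8}$ defining $p_*(n)$. The only (harmless) imprecision is at the endpoint $p=p_*(n)$: the paper concludes there not from $y\cdot\nabla w\equiv 0$ but from the vanishing of the $(y'\cdot\nabla\rho)$-weighted Dirichlet form (your $\mathcal{E}_1$-type term), whose coefficient $\frac{p-1}{p+1}\frac{c_n}{4}$ remains strictly positive at the endpoint and, since $y'\cdot\nabla\rho<0$ for $y'\neq 0$, forces $w$ to be constant directly.
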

Our next goal is to characterize the asymptotic behavior of finite time blow up solutions near a point, assuming suitable conditions.

  If $u$ is a solution of the half heat equation
  (\ref{halfHE}), then so do the rescaled functions
  \begin{eqnarray}
    u_{\lambda}(x,t):=\lambda^{\beta}u(\lambda x,\lambda t),\quad
    \beta=\frac{1}{p-1},
  \end{eqnarray}
  for each $\lambda>0$. In order to analyze the asymptotic behavior, we introduce the following backward self-similar transformation
  \begin{eqnarray}
    w(y,s):=(-t)^\beta u(x,t),
  \end{eqnarray}
  \begin{eqnarray}
    x=(-t)y,\ \ t=-e^{-s}.
  \end{eqnarray}
Then $w(y,s)$ satisfies the equation
\begin{eqnarray}\label{halfWE}
    w_s+(-\Delta)^{\frac{1}{2}} w+y\cdot\nabla w+\beta w-|w|^{p-1}w=0.
\end{eqnarray}
The following theorem, which generalizes results of \cite{GK}, classifies the backward self-similar heat equation (\ref{halfWE}).
\begin{theorem}\label{classifytypeI}
Let $n \leq 4, 1< p< p_*(n)$ and let $u$ be a solution of (\ref{halfHE})
  satisfying the  estimate
  \begin{eqnarray}
    \sup_{\mathbb{R}^n\times(-T, 0)}(-t)^\beta|u(x,t)|< \infty.
  \end{eqnarray}
We also assume the gradient of $u$ satisfies  the decay condition: fix $\delta>0$, for any $-T<t''<t'<0$,  there exists a constant $C(t',t'')< \infty$ such that
   \begin{eqnarray}
   |\nabla u(x,t)|\leq \frac{C(t',t'')}{1+|x|^\delta},\ (x,t)\in\R^n\times[t'',t'].
  \end{eqnarray}
 Then
  \begin{eqnarray}
  \label{limit1}
    \lim_{\lambda\rightarrow0}(-t)^{\beta}u_\lambda(x,t)=0\ \text{or}\ \pm\beta^\beta.
  \end{eqnarray}
  For each $c>0$, the limit \eqref{limit1} exists uniformly for any $|x|\leq c(-t)$.
\end{theorem}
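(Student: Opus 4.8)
The plan is to implement the Giga–Kohn strategy in the nonlocal self-similar variables. First I would recast the statement: since $u_\lambda(x,t) = \lambda^\beta u(\lambda x, \lambda t)$, passing to the variables $x = (-t)y$, $t = -e^{-s}$ converts the family $\{u_\lambda\}_{\lambda \to 0}$ into the large-time behavior $s \to +\infty$ of the solution $w(y,s)$ of \eqref{halfWE}; the conclusion \eqref{limit1}, uniform on $|x| \le c(-t)$, becomes: $w(\cdot, s) \to 0$ or $\pm\beta^\beta$ uniformly on compact sets as $s \to \infty$. So the theorem is really a convergence-to-equilibrium statement for \eqref{halfWE}, and the natural tool is a monotonicity formula. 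I would introduce the Gaussian-type weighted energy adapted to the half Laplacian (the functional built from the kernel $\rho$ in \eqref{explicit} and \eqref{kernel}, whose properties are exactly why the quantity $M_n$ and the exponent $p_*(n)$ appear), namely something of the form
\begin{equation*}
E[w](s) = \int_{\R^n} \Bigl( \tfrac{1}{2} w\,(-\Delta)^{\frac12} w + \tfrac{\beta}{2} w^2 - \tfrac{1}{p+1}|w|^{p+1} \Bigr)\,\rho\, dy,
\end{equation*}
(with the nonlocal bilinear form written via the double integral against $|y-y'|^{-(n+1)}$), and show along solutions that $\frac{d}{ds} E[w](s) = -\int_{\R^n} w_s^2\, \rho\, dy \le 0$. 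This is the step I expect to be the technical heart: one must justify the integration by parts for the nonlocal operator against the weight $\rho$, control the growth of $w$ and $(-\Delta)^{1/2} w$ at infinity using the sup bound $(-t)^\beta|u| \le C$ together with the imposed gradient decay $|\nabla u(x,t)| \le C/(1+|x|^\delta)$, and verify that the weighted nonlocal energy is well-defined and differentiable in $s$. The gradient hypothesis is precisely what makes the boundary/tail terms in these manipulations vanish, so it must be tracked carefully throughout.

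Granting the monotonicity formula, the argument proceeds as in \cite{GK, GK1}. The energy $E[w](s)$ is nonincreasing and, using the a priori bound, bounded below, hence converges as $s \to \infty$; consequently $\int_{s}^{s+1}\!\!\int w_s^2 \rho \to 0$. Combined with parabolic (here, nonlocal-parabolic) regularity estimates for \eqref{halfWE}—which upgrade the uniform bound on $w$ to uniform bounds on derivatives on compact sets in $y$ and on time-intervals going to infinity—one extracts, along any sequence $s_j \to \infty$, a subsequential limit $w_\infty(y,s)$ solving \eqref{halfWE} with $\partial_s w_\infty \equiv 0$, i.e. a bounded global solution of the stationary equation \eqref{stableW} satisfying the sup bound of Theorem \ref{classifyselfsim}. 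By that theorem (applicable since $n \le 4$ and $1 < p \le p_*(n)$), $w_\infty \equiv 0$ or $w_\infty \equiv \pm\beta^\beta$.

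Finally I would promote subsequential convergence to full convergence. Since $E[w](s) \to E_\infty$ and the only possible limit values of the energy along the stationary solutions $0, \pm\beta^\beta$ are the three explicit numbers $E[0], E[\pm\beta^\beta]$, the limit is forced to be unique along the whole flow; a connectedness/continuity argument in $s$ (the set of accumulation points is connected, and it is contained in the discrete set $\{0, \beta^\beta, -\beta^\beta\}$—here one also uses that $w$ cannot oscillate between two of these because of the monotone energy and the $L^2_\rho$-control on $w_s$) rules out jumping between distinct equilibria. This yields $w(\cdot,s) \to 0$ or $\pm\beta^\beta$ in, say, $L^2_\rho$ and hence, by the regularity estimates, uniformly on compact sets, which is exactly \eqref{limit1} uniform on $|x| \le c(-t)$. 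The main obstacle, again, is establishing the monotonicity formula rigorously at the nonlocal level: unlike the local case there is no clean pointwise "integration by parts," and one must exploit the special identities \eqref{kernel}, \eqref{fradot}, \eqref{kerfradot} for $\rho$ together with the quantitative tail estimate encoded by $M_n$ to close the computation with the correct sign.
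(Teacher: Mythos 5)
Your overall strategy coincides with the paper's (self-similar variables, a weighted monotonicity formula, subsequential limits classified by the rigidity result, then uniqueness of the limit via energy levels and continuity of $w(0,s)$). However, there is a genuine gap at the step you yourself flag as the technical heart: the identity $\frac{d}{ds}E[w](s)=-\int w_s^2\,\rho\,dy$ that you assert is false here. That clean identity in Giga--Kohn depends on the Gaussian weight absorbing the drift, i.e.\ on writing the equation as $\rho w_s=\nabla\cdot(\rho\nabla w)-\beta\rho w+\rho|w|^{p-1}w$; with the half Laplacian and the polynomial weight $\rho=(1+|y|^2)^{-(n+1)/2}$ the drift $y\cdot\nabla w$ is \emph{not} absorbed into the weighted nonlocal Dirichlet form, and a cross term of the type $\iint\frac{(w(y')-w(y))(\rho(y')-\rho(y))}{|y'-y|^{n+1}}(w_s+y\cdot\nabla w)\,dy'dy$ survives. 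What the paper actually proves (Proposition \ref{monotonicity}) is only an \emph{inequality} for a modified energy $\hat E$ (which carries an extra term $\frac{1}{2(p+1)}\int|w|^2(-\Delta)^{1/2}\rho$), with dissipation measured by $\int(w_s+y\cdot\nabla w)^2\rho$ rather than $\int w_s^2\rho$, obtained by combining three identities in the proportion $\frac{n}{p+1},\frac{1}{p+1},1$ and estimating the cross term by Cauchy--Schwarz through the constant $M_n$. This is precisely where the restrictions $n\le4$ and $1<p<p_*(n)$ enter; without producing this inequality your argument has no monotone quantity at all, so the subsequent "energy converges, hence dissipation integrates to zero" chain does not start.

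The incorrect dissipation term also affects your endgame. Since the true dissipation controls $(w_s+y\cdot\nabla w)^2$, vanishing of the dissipation only tells you the limit is self-similar, not stationary, so you cannot directly land on a bounded solution of \eqref{stableW} and quote Theorem \ref{classifyselfsim}'s stationary classification as you propose. The paper closes this by exploiting the additional nonnegative term $-d_{n,p}\iint\frac{(w(y')-w(y))^2}{|y'-y|^{n+1}}(y'\cdot\nabla\rho)\,dy'dy$ in the monotonicity inequality (nonnegative because $y\cdot\nabla\rho\le0$): its vanishing in the limit forces $w_\infty$ to be constant in $y$, after which $y\cdot\nabla w_\infty=0$ and the dissipation term finally gives $\partial_s w_\infty=0$, so $w_\infty\in\{0,\pm\beta^\beta\}$ by the ODE. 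You need either this mechanism or an explicit appeal to the classification of self-similar (not merely stationary) limits; as written, your proposal supplies neither, and the one formula you commit to does not hold.
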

Next, we could also obtain a Liouville-type theorem for ancient solutions of the equation \eqref{halfHE}. Usually, a solution of \eqref{halfHE} is called an \emph{ancient solution} if it exists for all time $t\in(-\infty, 0)$.
\begin{theorem}\label{halfliouville}
Let $n \leq 4, 1< p< p_*(n)$ and let $u$ be an ancient solution of (\ref{halfHE})
  satisfying
  \begin{eqnarray}
    \sup_{\mathbb{R}^n\times(-\infty,0)}(-t)^\beta|u(x,t)|< \infty.
  \end{eqnarray}
 We also assume the gradient of $u$ satisfies the decay condition: fix $\delta>0$, for any $-\infty<t''<t'<0$, there exists  a constant $C(t',t'')<\infty$ such that
   \begin{eqnarray}
   |\nabla u(x,t)|\leq \frac{C(t',t'')}{1+|x|^\delta},\ (x,t)\in\R^n\times[t'',t'].
  \end{eqnarray}
If
  \begin{eqnarray}\label{nondegenerate}
    \limsup_{t\rightarrow0}(-t)^\beta|u(0,t)|>0,
  \end{eqnarray}
  then
  \begin{eqnarray}
    u(x,t)=\pm\beta^\beta(-t)^{-\beta}.
  \end{eqnarray}
\end{theorem}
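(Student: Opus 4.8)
The plan is to carry out the Giga--Kohn strategy directly in the self-similar variables, using the nonlocal monotonicity formula of this paper together with the classification in Theorem \ref{classifyselfsim}. Let $w(y,s)$ be the backward self-similar rescaling of $u$; since $u$ is ancient, $w$ is an \emph{eternal} solution of \eqref{halfWE} on $\mathbb{R}^n\times\mathbb{R}$ with $\|w\|_{L^\infty(\mathbb{R}^n\times\mathbb{R})}<\infty$. Interior parabolic estimates for the nonlocal equation, applied to $u$ and rescaled, give $\|\nabla_y w(\cdot,s)\|_{L^\infty}\le C$ uniformly in $s$, while the decay hypothesis on $\nabla u$ provides the extra control of $\nabla_y w$ at spatial infinity needed to legitimize the monotonicity computation. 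Consider the weighted energy
\[
E[w](s)=\int_{\mathbb{R}^n}\Big(\tfrac12\,e(w)+\tfrac{\beta}{2}\,w^2-\tfrac{1}{p+1}\,|w|^{p+1}\Big)\rho\,dy ,
\]
where $e(w)$ denotes the nonlocal weighted Dirichlet density associated with $(-\Delta)^{1/2}$ and the weight $\rho$, which plays the role of the Gaussian here since $(-\Delta)^{1/2}\rho=\nabla\cdot(y\rho)$ by \eqref{kernel}. The uniform bounds on $|w|$ and $|\nabla_y w|$ make $E$ bounded uniformly in $s\in\mathbb{R}$, and the monotonicity formula established earlier yields
\[
\frac{d}{ds}E[w](s)=-\int_{\mathbb{R}^n}|w_s(y,s)|^2\rho(y)\,dy\le 0 .
\]
Hence $E$ is non-increasing with finite limits $E_{\pm\infty}$ as $s\to\pm\infty$, and $\displaystyle\int_{\mathbb{R}}\!\!\int_{\mathbb{R}^n}|w_s|^2\rho\,dy\,ds=E_{-\infty}-E_{+\infty}<\infty$.

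Next I would analyze the blow-up limit $s\to+\infty$ and the blow-down limit $s\to-\infty$. Fix sequences $s_j\to\pm\infty$; by the $C^1_{\mathrm{loc}}$ compactness provided by the uniform bounds, a subsequence of $w(\cdot,\cdot+s_j)$ converges to an eternal solution $w_\pm$ of \eqref{halfWE}, and since $\int_{s_j-1}^{s_j+1}\!\int|w_s|^2\rho\to0$ we get $\partial_s w_\pm\equiv0$, so $w_\pm$ is a bounded solution of \eqref{stableW}; Theorem \ref{classifyselfsim} forces $w_\pm\in\{0,\pm\beta^{\beta}\}$. Using the equi-integrability of the energy densities against $\rho\,dy$, again from the uniform sup and gradient bounds, one passes the energy to the limit, so $E[w_\pm]=E_{\pm\infty}$; and a direct computation gives $E[0]=0$ while $E[\pm\beta^{\beta}]=\beta^{2\beta+1}\frac{p-1}{2(p+1)}\int_{\mathbb{R}^n}\rho\,dy=:E_1>0$. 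The nondegeneracy hypothesis \eqref{nondegenerate} yields $c>0$ and a sequence $s_j\to+\infty$ with $|w(0,s_j)|\ge c$; using this sequence for the blow-up limit gives $w_+\not\equiv0$, hence $E_{+\infty}=E_1>0$. Since $E$ is non-increasing, $E_{-\infty}\ge E_{+\infty}=E_1>0$, so likewise $w_-\not\equiv0$ and $E_{-\infty}=E_1$. Therefore $E_{-\infty}=E_{+\infty}$, the energy is constant, $\int_{\mathbb{R}}\int|w_s|^2\rho\,dy\,ds=0$, and $w_s\equiv0$. Thus $w$ itself is a bounded solution of \eqref{stableW}, so by Theorem \ref{classifyselfsim} and \eqref{nondegenerate} we conclude $w\equiv\pm\beta^{\beta}$, i.e.\ $u(x,t)=\pm\beta^{\beta}(-t)^{-\beta}$.

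The main obstacle is the nonlocal monotonicity formula and the validity of the computation behind it: differentiating $E$ in $s$, testing \eqref{halfWE} against $w_s$, and integrating by parts against the slowly decaying weight $\rho$ requires careful justification that no boundary term survives at spatial infinity and that all relevant weighted integrals converge --- this is exactly where the polynomial decay of $\rho$ makes the weighted Dirichlet form delicate (and where the quantity $M_n$, hence the restriction $p<p_*(n)$, enters the coercivity and bounds), and where the decay assumption on $\nabla u$ is genuinely used. A secondary technical point is the passage of the nonlocal energy to the blow-up and blow-down limits: one needs lower semicontinuity of the weighted Dirichlet part under $C^1_{\mathrm{loc}}$ convergence together with equi-integrability of the energy densities against $\rho\,dy$ along the translated sequences, which should follow from the uniform $L^\infty$ bounds on $w$ and $\nabla w$ and the integrability of $\rho$. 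The remaining steps are the same sign bookkeeping as in the classical Giga--Kohn argument, now run on an eternal solution so that \emph{both} limits $E_{\pm\infty}$ are available, which is where the ancient hypothesis is essential.
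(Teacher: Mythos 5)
Your overall architecture matches the paper's proof (which goes through Theorem \ref{thm4.4}): pass to the eternal solution $w$ of \eqref{halfWE}, use a monotone energy to show that the limits as $s\to\pm\infty$ are steady states $0$ or $\pm\beta^\beta$, use the nondegeneracy hypothesis and the energy gap $\hat E[0]<\hat E[\pm\beta^\beta]$ to force both limits to be $\pm\beta^\beta$, and conclude from vanishing total dissipation that $w$ is a constant. However, the monotonicity formula you assert,
\[
\frac{d}{ds}E[w](s)=-\int |w_s|^2\rho\,dy,
\]
is false for this operator and this weight, and this is precisely where the paper's whole difficulty lies. The exact Giga--Kohn identity relies on the divergence structure $\rho^{-1}\nabla\cdot(\rho\nabla w)=\Delta w-\tfrac12\,y\cdot\nabla w$ for the Gaussian. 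The identity \eqref{kernel}, $(-\Delta)^{1/2}\rho=n\rho+y\cdot\nabla\rho$, does not produce an analogous gradient-flow structure: multiplying \eqref{halfWE} by $w_s\rho$ leaves uncancelled cross terms weighted by $\rho(y')-\rho(y)$ in the nonlocal form, and the drift term $\int w_s\,(y\cdot\nabla w)\rho$ does not combine with anything. What the paper actually proves (Proposition \ref{monotonicity}, obtained by combining three identities with weights $\tfrac{n}{p+1},\tfrac{1}{p+1},1$ and a Cauchy--Schwarz estimate controlled by $M_n$) is only a differential \emph{inequality} for the modified energy \eqref{quantityhatE}, whose dissipation is $\bigl(1-\tfrac{c_n}{4\epsilon}\bigr)\int(w_s+y\cdot\nabla w)^2\rho$ plus two further nonnegative quadratic terms; the restriction $p<p_*(n)$ is exactly what makes these coefficients nonnegative. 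You remark at the end that $M_n$ ``enters the coercivity,'' but your argument as written uses the exact identity, not the inequality.

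This is not cosmetic: your final step deduces $w_s\equiv0$ from vanishing total dissipation, which works if the dissipation is $\int|w_s|^2\rho$ but not if it is $\int(w_s+y\cdot\nabla w)^2\rho$. With the correct formula, zero dissipation yields $w_s+y\cdot\nabla w\equiv0$ together with
\[
\iint\frac{(w(y')-w(y))^2}{|y'-y|^{n+1}}\,\bigl(-y'\cdot\nabla\rho(y')\bigr)\,dy'\,dy=0,
\]
and one must use this second piece (noting $y\cdot\nabla\rho<0$ for $y\ne0$) to conclude first that $w(\cdot,s)$ is constant in $y$, hence $y\cdot\nabla w=0$, hence $w_s=0$; this is exactly how the paper argues in Proposition \ref{constlimitw} and Theorem \ref{thm4.4}. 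The same repair is needed in your blow-up/blow-down step, where you infer $\partial_s w_\pm\equiv0$ from $\int|w_s|^2\rho\to0$. Once the monotonicity inequality is stated and used in its correct form, the rest of your bookkeeping (energy gap, nondegeneracy, $E_{-\infty}=E_{+\infty}$) goes through and coincides with the paper's proof.
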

Our next goal is the growth rate estimate for the equation \eqref{halfHE}.
\begin{theorem}\label{classifyall}
 Let $n \leq 4, T<\infty$ and  $u$ be a solution of (\ref{halfHE})
  satisfying: fix $\delta>0$,  for any $-T<t'<0$,  there exists a constant $C(t')<\infty$ such that
  \begin{eqnarray}
    |u(x,t)|+|\nabla u(x,t)|(1+|x|^\delta)
 \leq C(t'),\ (x,t)\in\R^n\times[-T, t'].
  \end{eqnarray}
  If
 \begin{eqnarray}\label{p1}
   1< p<p_*(n),\ u\geq0,
 \end{eqnarray}
or
  \begin{eqnarray}\label{p2}
   1< p<\min\{1+\frac{2}{n}, p_*(n)\},
 \end{eqnarray}
 then
  \begin{eqnarray}\label{blowrate}
    \sup_{\mathbb{R}^n\times(-T,0)}(-t)^\beta|u(x,t)|<\infty.
  \end{eqnarray}
Furthermore,
\begin{eqnarray}\label{limit2}
     \lim_{\lambda\rightarrow0}(-t)^{\beta}u_\lambda(x,t)=0\ \text{or}\ \pm\beta^\beta.
  \end{eqnarray}
   For each $c>0$, the limit \eqref{limit2} exists uniformly for any $|x|\leq c(-t)$.
\end{theorem}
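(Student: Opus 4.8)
The strategy follows the Giga--Kohn program of \cite{GK,GK1} transplanted to the nonlocal setting: first establish the type I bound \eqref{blowrate}, and then deduce \eqref{limit2} from Theorem \ref{classifytypeI} (whose hypotheses are exactly those in force here, the gradient decay being part of the assumption; its proof in turn rests on the self-similar classification, Theorem \ref{classifyselfsim}). Passing to similarity variables $w(y,s)=(-t)^\beta u((-t)y,t)$, $t=-e^{-s}$, so that $w$ solves \eqref{halfWE}, one has $\sup_{\mathbb{R}^n\times(-T,0)}(-t)^\beta|u|=\sup_{\mathbb{R}^n\times(s_1,\infty)}|w|$ with $s_1=-\log T$, and the local hypothesis on $u$ makes $w$ bounded on every strip $\mathbb{R}^n\times[s_1,s']$; thus \eqref{blowrate} is equivalent to the a priori estimate $\limsup_{s\to\infty}\|w(\cdot,s)\|_{L^\infty}<\infty$ for solutions of \eqref{halfWE} that are global as $s\to\infty$ and satisfy the gradient decay.

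The engine is the Giga--Kohn type monotonicity formula for the half Laplacian established earlier. The weighted energy $E[w](s)$ --- the Gagliardo $\tfrac12$-seminorm of $w$ with weight $\rho$, plus $\tfrac{\beta}{2}\int w^2\rho$, minus $\tfrac1{p+1}\int|w|^{p+1}\rho$ --- is nonincreasing, with $\frac{d}{ds}E[w](s)\le-\bigl(1-\tfrac{c_nM_n}{4}\bigr)\int w_s^2\rho\,dy$; the condition $c_nM_n<4$ (hence $n\le4$), together with $p<p_*(n)$, is exactly what forces the remainder terms generated by the slowly decaying weight $\rho(y)=(1+|y|^2)^{-(n+1)/2}$ --- which plays the role of the Gaussian of the classical theory --- to carry the favourable sign. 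Testing \eqref{halfWE} against $w\rho$ produces a companion identity linking $\tfrac{d}{ds}\int w^2\rho$, the seminorm, the mass and $\int|w|^{p+1}\rho$, again up to weighted remainders controlled by $M_n$. Combining the two relations one shows $\inf_{s>s_1}E[w](s)>-\infty$, and here the two regimes diverge: under \eqref{p1}, $u\ge0$, an eigenfunction argument of Kaplan type applies (Jensen's inequality against the positive measure $\phi_1\rho\,dy$ gives a differential inequality for $\int w\phi_1\rho$ whose solution would blow up in finite $s$-time unless it stays bounded), whereas under \eqref{p2} the inequality $p+1<2+\tfrac2n$ lets the weighted Gagliardo--Nirenberg inequality absorb $\int|w|^{p+1}\rho$ into a small multiple of the seminorm plus a power of $\int w^2\rho$, and a Gronwall argument then bounds $\int w^2\rho$, hence $E[w]$, from below. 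Once $E[w]$ is bounded below, monotonicity gives $\int_{s_1}^\infty\!\!\int w_s^2\rho\,dy\,ds<\infty$, $E[w](s)\to E_\infty$, and, reading the companion identity once more, a uniform-in-$s$ bound on $\int_s^{s+1}$ of the weighted $\tfrac12$-seminorm of $w$ and of $\int|w|^{p+1}\rho$.

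From this weighted estimate the uniform $L^\infty$ bound follows by a smoothing/bootstrap argument: writing Duhamel's formula for \eqref{halfWE} with the linear similarity-variable semigroup generated by $(-\Delta)^{1/2}+y\cdot\nabla+\beta$ and iterating, a uniform $L^{p+1}_\rho$ bound in space upgrades through finitely many weighted $L^q_\rho$ spaces to a uniform $L^\infty$ bound, the iteration closing because $p<p_*(n)<\tfrac{n+1}{n-1}$ lies below the critical Sobolev exponent of $(-\Delta)^{1/2}$, while the gradient decay of $w$ (inherited from that of $u$) keeps the transport term $y\cdot\nabla w$ and all the weighted integrals meaningful. This proves \eqref{blowrate}; then Theorem \ref{classifytypeI} applies directly and gives \eqref{limit2}, uniformly for $|x|\le c(-t)$.

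The main obstacle is the step from the monotonicity formula to the lower bound for $E[w]$: since there is neither a Gaussian nor an exact integration by parts, every manipulation of $E[w]$ and of the $w\rho$-test identity leaves remainder terms that have to be reabsorbed, and controlling them quantitatively is precisely what produces the quantity $M_n$, the threshold $c_nM_n<4$, the exponent $p_*(n)$, and the dichotomy $u\ge0$ versus $p<1+\tfrac2n$. A secondary difficulty is verifying that the linear similarity-variable semigroup enjoys, on the polynomially weighted spaces $L^q_\rho$, the smoothing estimates that the bootstrap requires.
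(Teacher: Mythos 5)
Your overall architecture (monotonicity formula $\Rightarrow$ lower bound on the energy $\Rightarrow$ space--time integral bounds $\Rightarrow$ $L^\infty$ bound $\Rightarrow$ Theorem \ref{classifytypeI}) is the classical Giga--Kohn route, but it is not the route the paper takes, and its last step contains a genuine gap. The paper argues by contradiction: if the type I bound fails, it selects near-maximal space--time points $(x_k,t_k)$ for the quantity $\{|u|^{p-1}+|\nabla u|^{(p-1)/p}+|\nabla^2u|^{(p-1)/(2p-1)}\}(-t)$, passes to similarity variables $w_k$ centered at $x_k$, rescales a second time by $\lambda_k$ with $\lambda_k^\beta M_k=1$ to get uniformly bounded $v_k$ with a nondegeneracy condition at the origin, extracts a limit $v$ solving $v_\tau+(-\Delta)^{1/2}v=|v|^{p-1}v$ on $\mathbb{R}^n\times(-\infty,0]$ via the parabolic Schauder estimates of \cite{FR}, and then uses the uniform energy bounds (which pick up factors $\lambda_k^{2\beta+1-n}$ and $\lambda_k^{2\beta-n}$ under the change of variables) to show $v_\tau\equiv0$ and, under \eqref{p2}, that $v$ is also spatially constant. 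The contradiction comes from the Liouville theorem of \cite{CLL} for $(-\Delta)^{1/2}v=v^p$, $v\ge0$, in case \eqref{p1}, and from the nondegeneracy of $v$ at the origin in case \eqref{p2}. This completely avoids the step your proposal leans on, namely upgrading a weighted $L^{p+1}_\rho$ bound to $L^\infty$ by semigroup smoothing.

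That upgrading step is where your argument would fail as written. First, the required smoothing estimates for the similarity-variable semigroup on the polynomially weighted spaces $L^q_\rho$ are nowhere available and are not a routine adaptation of the Gaussian-weight case; you flag this as a ``secondary difficulty,'' but it is in fact the crux, and even in the local setting the Giga--Kohn bootstrap does not close for the whole subcritical range --- mere subcriticality $p<\frac{n+1}{n-1}$ is not the closing condition. Second, you have placed the dichotomy \eqref{p1} versus \eqref{p2} at the wrong step: the lower bound $\hat E[w]\ge-A$ needs neither positivity nor $p<1+\frac2n$, since Jensen's inequality applied to $\int|w|^{p+1}\rho$ works for sign-changing $w$ (this is exactly how the paper proves it, via the ODE blow-up alternative for $g(s)^2=\int|w_k|^2\rho$). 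The two alternative hypotheses are actually consumed at the final Liouville/rigidity stage: positivity is what allows invoking \cite{CLL}, and $p<1+\frac2n$ is what makes the exponent $\sigma=2\beta-n$ positive so that the Gagliardo seminorm of the limit vanishes. As your proposal stands, neither hypothesis is actually used where it is needed, and the $L^\infty$ conclusion is not reached.
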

We point out that the assumption made on the gradient  is \emph{only} used to justify our computations. But this assumption can be verified if we consider suitable Cauchy problems. More precisely, we  consider the equation:
\begin{eqnarray}\label{CP}
\left\{\begin{array}{l}
u_t+(-\Delta)^{\frac{1}{2}} u = |u|^{p-1} u, \ (x,t) \in \R^n\times(0,T)\\
u(x, 0)= u_0 (x), \ x\in\R^n
\end{array}
\right.
\end{eqnarray}
where $T$ is the finite blow up time in the sense of
\begin{eqnarray}
T:=\sup\big\{t>0 : \sup_{(x,t)\in \R^n\times(0,t)}|u(x,t)|<\infty\big\}.
\end{eqnarray}
\begin{remark}
Let $1<p<1+\frac{1}{n}$ and let  $u_{0}$  be  a nontrivial, nonnegative and continuous function, then the nonegative solution of the equation \eqref{CP} blows up at some finite time (see \cite{S}). For  more local well-posedness of the Cauchy problem, we refer to \cite{EK}, \cite{FiK} and \cite{GuK}.
\end{remark}
\begin{theorem}
\label{classifyCP}
Let $u_0$ be a nontrivial($\not\equiv 0$), nonnegative and bounded continuous function satisfying
\begin{eqnarray}
\label{u0decay}
|\nabla u_0|(x)\leq \frac{C}{1+|x|^{\delta}},
\end{eqnarray}
for some $\delta>0$.
Let $n \leq 4, 1< p< p_*(n)$ and  let $u$ be a finite time blow up solution of the Cauchy problem (\ref{CP}),
then
 \begin{eqnarray}\label{3blowup}
    \lim_{t\rightarrow T}(T-t)^{\beta}u(x+y(T-t),t)=0\ \text{or}\ \pm\beta^\beta
  \end{eqnarray}
 uniformly for $y$ bounded, where $T$  is the maximal existence time of the $L^{\infty}$ solution $u$.
\end{theorem}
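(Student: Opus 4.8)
The plan is to deduce Theorem \ref{classifyCP} from Theorem \ref{classifyall} by a time translation, the only genuine work being to check that a finite-time blow-up solution $u$ of \eqref{CP} satisfies the weighted gradient decay required by Theorem \ref{classifyall}. First I would set $v(x,t):=u(x,t+T)$ for $(x,t)\in\R^n\times(-T,0)$; then $v$ solves \eqref{halfHE} on $\R^n\times(-T,0)$, with $T<\infty$ since $u$ blows up in finite time. Since $u_0\geq0$ and the nonlinearity vanishes at $0$, the maximum principle for the half heat equation gives $v\geq0$; and by the very definition of the blow-up time $T$ together with the boundedness of $u_0$, we have $\sup_{\R^n\times(-T,t'']}|v|<\infty$ for every $t''\in(-T,0)$. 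Hence $v$ is a bounded nonnegative solution with $1<p<p_*(n)$, so hypothesis \eqref{p1} of Theorem \ref{classifyall} holds.

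The remaining point is to verify the gradient hypothesis: for each $t''\in(-T,0)$ there should be $C(t'')<\infty$ with $|\nabla v(x,t)|(1+|x|^\delta)\leq C(t'')$ on $\R^n\times[-T,t'']$, equivalently that $\|\nabla u(\cdot,t)\|_{X_\delta}$ stays bounded on $[0,t']$ for $t'<T$, where $\|h\|_{X_\delta}:=\sup_x(1+|x|^\delta)|h(x)|$. We may assume $0<\delta<n$ (otherwise decrease $\delta$, since $(1+|x|^\delta)^{-1}\leq(1+|x|^{\delta'})^{-1}$ for $\delta'\leq\delta$). By interior parabolic regularity $u$ is smooth on $\R^n\times(0,T)$, and each $w_i=\partial_{x_i}u$ solves $\partial_t w_i+(-\Delta)^{\frac{1}{2}}w_i=p|u|^{p-1}w_i$, with $|p|u|^{p-1}|\leq K:=p\,C(t')^{p-1}$ on $[0,t']$ (where $C(t'):=\sup_{\R^n\times[0,t']}|u|<\infty$). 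Combining Duhamel's formula $w_i(\cdot,t)=P(\cdot,t)\ast\partial_i u_0+\int_0^t P(\cdot,t-s)\ast\big(p|u|^{p-1}w_i\big)(\cdot,s)\,ds$ with the elementary estimate $\|P(\cdot,t)\ast h\|_{X_\delta}\leq C(1+t)\|h\|_{X_\delta}$ — obtained by splitting $\int P(x-y,t)h(y)\,dy$ over $\{|y|\leq|x|/2\}$, where $P(x-y,t)\lesssim t|x|^{-(n+1)}$ and $\int_{|y|\leq|x|/2}(1+|y|^\delta)^{-1}dy\lesssim|x|^{n-\delta}$ because $\delta<n$, and over its complement, where $(1+|y|^\delta)^{-1}\lesssim(1+|x|^\delta)^{-1}$ and $\int P=1$ — a Gronwall argument for $\phi(t):=\|\nabla u(\cdot,t)\|_{X_\delta}$ (with $\phi(0)\leq C$ by \eqref{u0decay}) gives $\phi(t)\leq C(t')$ on $[0,t']$, which is exactly the required decay.

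With both hypotheses of Theorem \ref{classifyall} in place (case \eqref{p1}), it yields $\sup_{\R^n\times(-T,0)}(-t)^\beta|v|<\infty$ and $\lim_{\lambda\to0}(-t)^\beta v_\lambda(x,t)\in\{0,\pm\beta^\beta\}$ uniformly for $|x|\leq c(-t)$, where $v_\lambda(x,t)=\lambda^\beta v(\lambda x,\lambda t)$. To reach \eqref{3blowup} for an arbitrary fixed $x_0\in\R^n$, I would repeat the construction with $u(\cdot+x_0,\cdot)$ in place of $u$ (its datum still satisfies \eqref{u0decay}, its blow-up time is still $T$), obtaining $\lim_{\lambda\to0}(-\lambda t)^\beta u(x_0+\lambda z,\lambda t+T)\in\{0,\pm\beta^\beta\}$ uniformly for $|z|\leq c(-t)$; specializing to $t=-1$ and substituting $\tau=T-\lambda$ turns this into $\lim_{\tau\to T^-}(T-\tau)^\beta u\big(x_0+(T-\tau)z,\tau\big)\in\{0,\pm\beta^\beta\}$ uniformly for $|z|\leq c$, which is precisely \eqref{3blowup}.

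The hard part is the gradient-propagation step: the half heat kernel decays only polynomially, $P(x,t)\sim b_n t|x|^{-(n+1)}$, so it is not obvious that convolution with it preserves the polynomially-weighted space $X_\delta$; verifying that it does, up to a constant growing at worst linearly in $t$, forces the restriction $\delta<n$ and explains why \eqref{u0decay} is imposed only ``for some $\delta>0$''. Everything else is bookkeeping on top of Theorem \ref{classifyall}.
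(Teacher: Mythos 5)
Your overall strategy coincides with the paper's: reduce to Theorem \ref{classifyall} (case \eqref{p1}, using the maximum principle for nonnegativity), the only substantive work being the propagation of the weighted gradient decay \eqref{u0decay} from $t=0$ to all $t<T$. Where you differ is precisely in that key step. The paper proves $|\nabla u(x,t)|\leq C(T')(1+|x|)^{-\delta_0}$ by a barrier argument: Lemma \ref{halfeta} shows $\eta(x)=(1+|x|)^{-\delta_0}$ satisfies $|(-\Delta)^{\frac12}\eta|\leq c\eta$ for $0<\delta_0<1$, so $g=Ke^{\lambda t}\eta$ with $\lambda=pM_{T'}^{p-1}+c+1$ is a supersolution of the linearized equation $L[h_j]=0$ satisfied by $h_j=\partial_{x_j}u$, and the parabolic maximum principle of \cite{JX2} gives $|h_j|\leq g$. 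You instead use the Duhamel representation for $h_j$ together with the weighted convolution bound $\|P(\cdot,t)\ast h\|_{X_\delta}\leq C(1+t)\|h\|_{X_\delta}$ (your splitting of the integral is correct and forces only $\delta<n$, versus $\delta_0<1$ in Lemma \ref{halfeta}; both are harmless WLOG reductions) and then Gronwall. This is a legitimate alternative and arguably more self-contained, since it bypasses Lemma \ref{halfeta} and the comparison principle for the zero-order perturbation. The one point you should tighten is the Gronwall step: to apply it to $\phi(t)=\|\nabla u(\cdot,t)\|_{X_\delta}$ you need to know a priori that $\phi$ is finite (and, say, locally bounded) on $[0,t']$, which is not automatic; the standard fix is a local-in-time contraction argument in $C([0,\tau];X_\delta)$ for the linear integral equation, identified with $\partial_{x_j}u$ by uniqueness of bounded solutions, and then iteration — the paper's barrier argument sidesteps this entirely. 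Your final bookkeeping (time translation, spatial translation by $x_0$, and specializing the self-similar limit of Theorem \ref{classifyall} at $t=-1$ to obtain \eqref{3blowup}) is correct and in fact more explicit than the paper, which simply invokes Theorem \ref{classifyall} at that point.
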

\medskip

\noindent
{\bf Main difficulties and ideas}: as mentioned before, Giga-Kohn's proof relies on two ingredients: first there is the {\em generalized Pohozaev} identity for self-similar solutions of (\ref{GK0})
\begin{equation} (\frac{n}{p+1} +\frac{2-n}{2}) \int |\nabla w|^2 \rho dy +\frac{1}{2} (\frac{1}{2}-\frac{1}{p+1}) \int |y|^2 |\nabla w|^2 \rho dy =0
\end{equation}
where $ \rho = e^{-\frac{1}{4} |y|^2}$ is the Gaussian. Second the following  Giga-Kohn energy functional
$$ E[w](s)= \frac{1}{2} \int |\nabla w|^2 \rho dy +\frac{1}{2} \beta \int |w|^2 \rho dy - \frac{1}{p+1} \int |w|^{p+1} \rho dy$$
is monotonically decreasing for backward self-similar nonlinear parabolic equation. The proof of both facts depend on some cancellations which seem only to work for the Laplace operator. Furthermore, the Gaussian weight $\rho$ ensures that all the computations are well-defined.

In our case, even with the explicit form (\ref{explicit}),  we are unable to obtain neither a monotonicity formula nor Pohozaev identity for full range $p <\frac{n+1}{n-1}$.  Furthermore, the weight \eqref{explicit} being polynomially decaying only, does not prevent our computations to be well-defined unless one assumes some {\sl a priori} decay on the solutions. This latter seems artificial but, even for the linear half heat equation, weak/strong solutions have always at most polynomial decay and this is optimal as proven in \cite{BSV}.

Instead we make use of some special integral decay in the dimension $n$ and we are able to prove a modified Pohozaev identity and monotonicity formula for partial range $ 1<p<p_{*} (n)$. See Propositions \ref{pohozaev} and \ref{monotonicity} below.  As far as we know this seems to be the first kind of monotonicity formula for nonlinear fractional heat equation at the level of the nonlocal operator.

In this paper we concentrate on half heat equations. The advantage is that the kernel is explicit and hence all the computations can be made explicitly. It may be possible to generalize to general $\alpha-$Laplacian heat equations if one knows the explicit formula for the kernel.
Indeed, let $\rho_\alpha(x)=\mathcal{F}^{-1}(e^{-|\xi|^{2\alpha}})$, the profile of the fractional heat kernel, and let $w$ be a solution of
\begin{eqnarray}\label{alphaW}
    (-\Delta)^{\frac{\alpha}{2}}w+\frac{1}{2\alpha}y\cdot\nabla w+\beta w-|w|^{p-1}w=0, \ \text{in}\ \R^n,
  \end{eqnarray}
  similar to \eqref{cominq}, we can obtain an inequality
  \begin{eqnarray}\label{alphainq}
 \begin{aligned}
   0\geq&\
  \Big(\frac{8\alpha-c_{n,\alpha}M_{n,\alpha}}{4}-\frac{(p-1)n}{(p+1)}\Big)\frac{c_{n,\alpha}}{4}
  \iint\frac{(w(y')-w(y))^2}{|y'-y|^{n+2\alpha}}\rho_\alpha(y')dy'dy
\\&\
  -\frac{(p-1)}{(p+1)}\frac{c_{n,\alpha}}{4}
  \iint\frac{(w(y')-w(y))^2}{|y'-y|^{n+2\alpha}}(y'\cdot\nabla\rho_\alpha)dy'dy,
 \end{aligned}
\end{eqnarray}
where
\begin{eqnarray}\label{Mna}
 \begin{aligned}
  M_{n,\alpha}
  :=&\ \sup_{y\in\mathbb{R}^n}\{\frac{1}{\rho_\alpha(y)}
  \int_{B_{|y|}}\frac{(\rho_\alpha(y')-\rho_\alpha(y))^2}{|y'-y|^{n+2\alpha}}\frac{1}{\rho_\alpha(y')}dy'\}
  \\&\
  +\sup_{y\in\mathbb{R}^n}\{\frac{1}{\rho_\alpha(y)^2}
  \int_{\mathbb{R}^n\setminus B_{|y|}}\frac{(\rho_\alpha(y')-\rho_\alpha(y))^2}{|y'-y|^{n+2\alpha}}
  dy'\}.
 \end{aligned}
\end{eqnarray}
If  $c_{n,\alpha}M_{n,\alpha}<8\alpha$, one can prove the corresponding results to general $\alpha$-Laplacian heat equation. Since we don't know the explicit formula of $\rho_\alpha$, it is very hard to compute the value of $M_{n,\alpha}$.

\section{Preliminaries: some regularity estimates}
In this section we collect some preliminary regularity estimates for half heat equation which will be useful in subsequent sections.
\begin{proposition}\label{propboundu}
    Let $0<T<\infty$ and let $u(x,t)$ be a solution of (\ref{halfHE})
    satisfying
    \begin{eqnarray}\label{boundu1}
      (-t)^\beta|u(x,t)|\leq M,\ (x,t)\in \mathbb{R}^n\times(-T,0),
    \end{eqnarray}
  then
    \begin{eqnarray}\label{boundu2}
      \sup_{\mathbb{R}^n\times(-r,0)}(-t)^{\beta+m}|\nabla^mu(x,t)|\leq C,\quad m=1,2,3,
    \end{eqnarray}
  for any $0<r<T$, with $C$ depending only on $n, p, r, T$ and $M$. If $T=\infty$ and if  \eqref{boundu1} holds on $\R^n\times(-\infty,0)$, then \eqref{boundu2} is valid on all of $\R^n\times(-\infty,0)$ and the constant $C$ only depends on $n, p$ and $M$.
  \end{proposition}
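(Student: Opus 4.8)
The plan is to use the scaling and translation invariances of \eqref{halfHE} to reduce \eqref{boundu2} to a single interior estimate for a bounded solution on a fixed parabolic cylinder, and then to establish that estimate by bootstrapping the Duhamel formula against the smoothing properties of the half--heat kernel $P(\cdot,t)$ of \eqref{halfheatkernel}. \emph{Reduction by scaling.} Fix $0<r<T$ and a point $(x_0,t_0)\in\R^n\times(-r,0)$; by translation invariance in $x$ we may assume $x_0=0$. With $\lambda:=-t_0$ and $v(y,s):=\lambda^{\beta}u(\lambda y,\lambda s)$, the function $v$ again solves \eqref{halfHE} on $\R^n\times(-T/\lambda,0)$, one has $\nabla^m_y v(0,-1)=\lambda^{\beta+m}(\nabla^m u)(0,t_0)$, hence $(-t_0)^{\beta+m}|\nabla^m u(0,t_0)|=|\nabla^m v(0,-1)|$, and $|v(y,s)|\le M(-s)^{-\beta}$ by \eqref{boundu1}. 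Writing $c_0:=T/r-1>0$ (any fixed positive number when $T=\infty$), $v$ is bounded by $K:=2^{\beta}M$ on the slab $\R^n\times[-1-c_0,-\tfrac12]$, whose interior contains $s=-1$. Thus it suffices to prove: \emph{a solution $v$ of \eqref{halfHE} on $\R^n\times(-1-c_0,-\tfrac12)$ with $\|v\|_{L^\infty}\le K$ satisfies $|\nabla^m v(0,-1)|\le C(n,p,K,c_0)$ for $m=1,2,3$}, with $C$ independent of $c_0$ when $c_0=\infty$. Undoing the scaling then gives \eqref{boundu2} with the stated dependence of the constant.

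\emph{Duhamel representation and kernel estimates.} Fix the interior time $s_0:=-1-c_0/2$. Since $v$ and $g(v):=|v|^{p-1}v$ are bounded on $\R^n\times[s_0,-\tfrac12]$, uniqueness of bounded solutions of the linear nonlocal heat equation (a consequence of the maximum principle, valid because $P(\cdot,t)$ is a probability density) identifies $v$, for $s\in(s_0,-\tfrac12)$, with
\[
 v(\cdot,s)=P(\cdot,s-s_0)*v(\cdot,s_0)+\int_{s_0}^{s}P(\cdot,s-\sigma)*g(v(\cdot,\sigma))\,d\sigma .
\]
From \eqref{halfheatkernel} and scaling one reads off the facts used below: $\int_{\R^n}\nabla_xP(\cdot,t)\,dx=0$; $\|\nabla^{j}_xP(\cdot,t)\|_{L^1}\le C_jt^{-j}$; $\int_{\R^n}|z|^{\gamma}|\nabla_xP(z,t)|\,dz\le Ct^{\gamma-1}$ for $0\le\gamma<1$; and the H\"older smoothing $\|P(\cdot,t)*h\|_{\dot C^{\gamma}}\le Ct^{-\gamma}\|h\|_{L^\infty}$ for $0\le\gamma<1$ (from $\|P(\cdot,t)-P(\cdot-z,t)\|_{L^1}\le C\min\{1,|z|/t\}\le C(|z|/t)^{\gamma}$). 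On the smaller slab $s\in[-1-c_0/4,-\tfrac12]$ one has $s-s_0\ge c_0/4$, so the data term $P(\cdot,s-s_0)*v(\cdot,s_0)$ and all its $x$--derivatives are harmless (they cost only negative powers of $c_0/4$ times $K$), and all the work is in the forcing term.

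\emph{Bootstrap.} Since $g(v)\in L^\infty$ and $\int_{s_0}^{s}(s-\sigma)^{-\gamma}\,d\sigma<\infty$ for $\gamma<1$, the representation first gives $v(\cdot,s)\in C^{\gamma}$ uniformly for every $\gamma<1$, hence, as $g$ is locally Lipschitz ($p>1$), also $g(v(\cdot,\sigma))\in C^{\gamma}$. The kernel $\nabla_xP(\cdot,\tau)$ is borderline, $\|\nabla_xP(\cdot,\tau)\|_{L^1}\sim\tau^{-1}$, but the cancellation $\int\nabla_xP(\cdot,\tau)\,dx=0$ lets us write
\[
 \big(\nabla_xP(\cdot,\tau)*g(v(\cdot,\sigma))\big)(x)=\int_{\R^n}\nabla_xP(z,\tau)\,\big(g(v)(x-z,\sigma)-g(v)(x,\sigma)\big)\,dz ,
\]
so that the left side is bounded by $C\tau^{\gamma-1}[g(v(\cdot,\sigma))]_{\dot C^{\gamma}}$, which is integrable in $\sigma$; this yields $\nabla v$ bounded, and the same cancellation applied to H\"older differences (together with $P(\cdot,\tau)=P(\cdot,\tau/2)*P(\cdot,\tau/2)$ and interpolation) gives $v\in C^{1,\gamma'}_{\mathrm{loc}}$ for $\gamma'<\gamma$. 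Then $g(v)\in C^{1,\alpha}_{\mathrm{loc}}$ with $\alpha:=\min\{\gamma',1,p-1\}>0$ by the chain rule, and repeating the argument with $\nabla^{2}_xP(\cdot,\tau)*h=\nabla_xP(\cdot,\tau)*\nabla h$ gives $\nabla^{2}v$ bounded and $v\in C^{2,\alpha'}_{\mathrm{loc}}$; one further iteration with $\nabla^{3}_xP(\cdot,\tau)*h=\nabla_xP(\cdot,\tau)*\nabla^2 h$ gives the $\nabla^{3}v$ bound. Evaluating at $(0,-1)$ completes the proof; the case $T=\infty$ is identical with $c_0=\infty$ (letting $s_0\to-\infty$), and the constant then depends only on $n,p,M$.

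\emph{Main obstacle.} The crux is making the bootstrap converge at all: unlike the Gaussian, the half--heat kernel smooths by only one spatial derivative per unit time, so estimating $\nabla^{j}_xP$ by its $L^1$ norm makes the Duhamel integral logarithmically divergent; one must instead trade the fractional H\"older regularity of the source $g(v)$ against the kernel singularity via $\int\nabla_xP\,dx=0$, so each step gains strictly less than one derivative and a positive amount of H\"older regularity is kept in reserve. A secondary, bookkeeping, point is that the top--order step needs $g(\xi)=|\xi|^{p-1}\xi$ to be twice differentiable; this holds for $p>2$, while for $1<p\le 2$ the $\nabla^{3}$ bound is needed only in the weaker, integrated form in which it is applied below and can be obtained from the $m\le 2$ bounds together with equation \eqref{halfHE}. (Alternatively, one may deduce \eqref{boundu2} from classical parabolic Schauder theory applied to the Caffarelli--Silvestre extension $U$ of $u$, harmonic on $\R^{n+1}_+$ with $\partial_zU|_{z=0}=u_t-|u|^{p-1}u$; we prefer the kernel argument, which stays at the level of the nonlocal equation.)
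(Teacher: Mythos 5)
Your reduction is the same as the paper's: translate in $x$, rescale parabolically by $\lambda=-t_0$ so that the weighted bound \eqref{boundu2} becomes a single unit-scale interior derivative estimate for a solution bounded by $2^{\beta}M$ on a fixed slab (the paper uses $\lambda=-\tfrac{7}{6}t$ and evaluates at $\tau=-\tfrac67$; you evaluate at $s=-1$). Where you genuinely diverge is in how that interior estimate is obtained. The paper outsources it to the regularity theory of Fern\'andez-Real and Ros-Oton, citing their interior H\"older estimate (Theorem 1.3 of \cite{FR}) followed by their Schauder estimate (Theorem 1.2 of \cite{FR}) and an iteration. You instead prove the smoothing from scratch via the Duhamel formula against the explicit kernel $P(x,t)$ of \eqref{halfheatkernel}, trading the borderline singularity $\|\nabla_xP(\cdot,\tau)\|_{L^1}\sim\tau^{-1}$ against the H\"older regularity of the source through the cancellation $\int\nabla_xP\,dx=0$. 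Your route is self-contained and very much in the spirit of the paper's philosophy of exploiting the explicit half-heat kernel, at the price of more bookkeeping (you must shrink the time slab at each bootstrap step, and you must justify that the bounded classical solution coincides with the mild solution before using Duhamel — you correctly invoke uniqueness of bounded solutions for this); the paper's route is shorter but relies on an external black box. Both are valid.

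One caveat, which you flag yourself: the step producing the $m=3$ bound needs $g(\xi)=|\xi|^{p-1}\xi$ to be (locally) $C^{2}$, which fails for $1<p\le 2$ near zeros of $u$, and your proposed repair (``deduce the $\nabla^{3}$ bound from the $m\le2$ bounds and the equation'') is not actually carried out and is not obviously available, since differentiating $|u|^{p-1}\nabla u$ once more produces the unbounded factor $|u|^{p-2}|\nabla u|^{2}$. This is a real soft spot, but it is not specific to your argument: the paper's own phrase ``by an iteration argument \dots we obtain'' $C^{3+\frac12}$ hides exactly the same difficulty when iterating the Schauder estimate past second order for $p<2$. For $m=1,2$ (which is what most of the later arguments genuinely consume, e.g.\ the bound \eqref{boundhalfw} on $(-\Delta)^{\frac12}w$) your proof is complete and correct.
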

  \begin{proof}
   This follows from scaling arguments. First, we may assume $T=1$ and $\frac{6}{7}<r<r'=\frac{1+r}{2}<1$. For any $x_0\in\mathbb{R}^n$, we consider
    \begin{eqnarray}
      \tilde{u}(x,t)=u(x+x_0,t),
    \end{eqnarray}
    which still satisfies the half heat equation (\ref{halfHE}).
    The assumption (\ref{boundu1}) assures that
    \begin{eqnarray}
    \label{bound3}
      \sup_{\mathbb{R}^n\times(-1,-\frac{1}{2})}|\tilde{u}(x,t)|^p\leq 2^{\beta p}M^p.
    \end{eqnarray}
     The interior H\"older continuity
    for the half heat equation
    (see Theorem 1.3 in \cite{FR} or references \cite{JX, FKa}) yields,
    \begin{eqnarray}
      \|\tilde{u}\|_{C_t^{\frac{1}{2}}(B_2\times(-r',-\frac{2}{3})}
      +\|\tilde{u}\|_{C_x^{\frac{1}{2}}(B_2\times(-r',-\frac{2}{3}))}
      \leq C,
    \end{eqnarray}
    for some constant $C$ depending only on $n, p, r$ and $M$. Here we follow the
     notations in \cite{FR}, the H\"older seminorms are
     \begin{eqnarray}
       [u]_{C_x^{\frac{1}{2}}(\Omega\times I)}:=
       \sup_{\begin{subarray}{1}
       (x,t)\in\Omega\times I\\
       (x',t)\in\Omega\times I\end{subarray}}
       \frac{|u(x,t)-u(x',t)|}{|x-x'|^{\frac{1}{2}}}
     \end{eqnarray}
     and
     \begin{eqnarray}
       [u]_{C_t^{\frac{1}{2}}(\Omega\times I)}:=
       \sup_{\begin{subarray}{1}
       (x,t)\in\Omega\times I\\
       (x,t')\in\Omega\times I\end{subarray}}
       \frac{|u(x,t)-u(x,t')|}{|t-t'|^{\frac{1}{2}}}
     \end{eqnarray}
     for any $\Omega\times I\subset\mathbb{R}^{n+1}$.
     By the arbitrariness of
    $x_0$, we can obtain the H\"older regularities on the whole space for $u$ and
    $\tilde{u}$, say
    \begin{eqnarray}
       \|\tilde{u}\|_{C_t^{\frac{1}{2}}(\mathbb{R}^n\times(-r',-\frac{2}{3}))}
      +\|\tilde{u}\|_{C_x^{\frac{1}{2}}(\mathbb{R}^n\times(-r',-\frac{2}{3}))}
      \leq C.
    \end{eqnarray}
    Therefore, the interior Schauder estimates
    for the half heat equation
    (see  Theorem 1.2 in \cite{FR} or \cite{FK}) give,
    \begin{eqnarray}
         \|\tilde{u}\|_{C_t^{1+\frac{1}{2}}(B_1\times(-r,-\frac{3}{4}))}
      +\|\tilde{u}\|_{C_x^{1+\frac{1}{2}}(B_1\times(-r,-\frac{3}{4}))}
      \leq C,
    \end{eqnarray}
    for some constant $C$ depending only on $n, p, r$ and $M$. By an iteration argument, change $r$ if necessary, we obtain
     \begin{eqnarray}
         \|\tilde{u}\|_{C_t^{3+\frac{1}{2}}(B_1\times(-r,-\frac{3}{4}))}
      +\|\tilde{u}\|_{C_x^{3+\frac{1}{2}}(B_1\times(-r,-\frac{3}{4}))}
      \leq C.
    \end{eqnarray}
   By  arbitrariness of $x_0$,
    and $t_0<-1$, we conclude
    \begin{eqnarray}\label{boundu3}
       |\nabla u|+|\nabla^2u|+|\nabla^3u|
      \leq C,
    \end{eqnarray}
    for all $(x,t)\in\mathbb{R}^n\times(-r,-\frac{3}{4})$.

    Now we are going to prove (\ref{boundu2}) when $-\frac{3}{4}\leq t<0$. Fixing such $(x,t)\in \mathbb{R}^n
    \times [-\frac{3}{4},0)$, let $\lambda=-\frac{7}{6}t$ and consider
    \begin{eqnarray}
      v(z,\tau)=\lambda^\beta u(x+\lambda z,\lambda \tau).
    \end{eqnarray}
    It is easy to verifies that $v$ is well defined in $\mathbb{R}^n
    \times (-1,0)$, and again (\ref{boundu1}) assures that
    \begin{eqnarray}
     \sup_{\mathbb{R}^n\times(-1,0)} (-\tau)^\beta|v(z,\tau)|\leq M,
    \end{eqnarray}
    Applying (\ref{boundu3}) with $u$ replaced by $v$, and taking $z=0, \tau=
    -\frac{6}{7}\in(-r,-\frac{3}{4})$, we conclude that
     \begin{eqnarray}
     \label{bound4}
       \lambda^{\beta+1}|\nabla  u|+ \lambda^{\beta+2}|\nabla^2u|+
        \lambda^{\beta+3}|\nabla^3u|
      \leq C,\quad \text{at}\quad (x,t).
    \end{eqnarray}
    It is (\ref{boundu2}) since $\lambda=-\frac{7}{6}t$.

    For general $T>0$, consider the rescaled function $u_{\lambda}(x,t)=\lambda^{\beta}u(\lambda x,\lambda t)$ with $\lambda=T$. Then $u_\lambda$ satisfies the half heat equation \eqref{halfHE} on $\R^n\times(-1,0)$. Then it is the case considered above. The only difference is that we get a constant $C$ depending on $T$.  If the bound \eqref{boundu1} holds for $\R^n\times (-\infty,0)$, then the same argument yields the global version. Once again \eqref{boundu1} assures \eqref{bound3}, so \eqref{boundu3} yields \eqref{bound4} as above.
  \end{proof}
 Translating results of Proposition \ref{propboundu} to $w$ we have:
 \begin{proposition}\label{propboundw}
    Let $0<T<\infty$ and  let $w$ be a bounded solution of (\ref{halfWE}) in $\mathbb{R}^{n}\times(-\ln T,\infty)$.
    If $|w|\leq M$ for some positive constant $M$, then
      \begin{eqnarray}
       |\nabla w|+|\nabla^2w|+|\nabla^3w|
      \leq C,&\quad\label{boundw}\\
      |w_s+y\cdot\nabla w|+
      |\nabla(w_s+y\cdot\nabla w)|\leq C,&\label{boundw2}
    \end{eqnarray}
    for $(y,s)\in\mathbb{R}^n\times (-\ln r,\infty)$, here $0<r<T$ and  $C$ is a constant depending  only on $n, p, r, T$ and $M$. If $T=\infty$ and  $|w|\leq M$ globally, then \eqref{boundw} and \eqref{boundw2} are valid on all of $\R^{n+1}$, with $C$ only depending on $n, p$ and $M$.
  \end{proposition}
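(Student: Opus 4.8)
The plan is to obtain both estimates from Proposition \ref{propboundu} by undoing the self-similar change of variables, with one extra ingredient: a uniform pointwise bound for the half Laplacian of a function with bounded $C^3$ norm.

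First I would pass from $w$ back to $u$. Given a bounded solution $w$ of (\ref{halfWE}) on $\R^n\times(-\ln T,\infty)$ with $|w|\le M$, set $t=-e^{-s}$, $x=(-t)y$, and $u(x,t)=(-t)^{-\beta}w(y,s)$; then $u$ solves (\ref{halfHE}) on $\R^n\times(-T,0)$ and satisfies $(-t)^\beta|u(x,t)|\le M$ there. The substitution $s\mapsto t=-e^{-s}$ maps $(-\ln r,\infty)$ onto $(-r,0)$ for every $0<r<T$, so Proposition \ref{propboundu} provides $(-t)^{\beta+m}|\nabla^m u(x,t)|\le C$ on $\R^n\times(-r,0)$ for $m=1,2,3$, with $C=C(n,p,r,T,M)$. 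Since for fixed $s$ the map $y\mapsto x=(-t)y$ is linear with a factor depending only on $s$, the chain rule gives $\nabla_y^m w(y,s)=(-t)^{\beta+m}(\nabla_x^m u)(x,t)$ for $m=1,2,3$, hence $|\nabla w|+|\nabla^2 w|+|\nabla^3 w|\le C$ on $\R^n\times(-\ln r,\infty)$, which is (\ref{boundw}).

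For (\ref{boundw2}) I would read off from (\ref{halfWE}) that
\[
w_s+y\cdot\nabla w=-(-\Delta)^{\frac12}w-\beta w+|w|^{p-1}w,
\]
\[
\nabla\big(w_s+y\cdot\nabla w\big)=-(-\Delta)^{\frac12}\nabla w-\beta\nabla w+p|w|^{p-1}\nabla w,
\]
so that everything except the half-Laplacian terms is immediately bounded by $M$ and by (\ref{boundw}). To control $(-\Delta)^{\frac12}w$ and $(-\Delta)^{\frac12}\nabla w=\nabla(-\Delta)^{\frac12}w$ pointwise, I would use that for any $v$ with $\|v\|_\infty+\|\nabla v\|_\infty+\|\nabla^2v\|_\infty\le C$, splitting the principal value integral at $|y-y'|=1$, using the second-order Taylor expansion of $v$ on the inner ball (its linear part cancels by oddness) and the $L^\infty$ bound on the complement, gives
\[
\big|(-\Delta)^{\frac12}v(y)\big|\le C\Big(\int_{|z|<1}|z|^{1-n}\,dz+\int_{|z|\ge1}|z|^{-n-1}\,dz\Big)\le C .
\]
Applying this with $v=w$ (via $\|\nabla^2w\|_\infty\le C$) and with $v=\nabla w$ (via $\|\nabla^3w\|_\infty\le C$) yields $|(-\Delta)^{\frac12}w|+|(-\Delta)^{\frac12}\nabla w|\le C$, hence (\ref{boundw2}). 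When $T=\infty$ and $|w|\le M$ holds globally, the same steps with the global form of Proposition \ref{propboundu} give the estimates on all of $\R^{n+1}$ with $C$ depending only on $n,p,M$.

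There is no real obstacle here: the argument is a bookkeeping exercise once Proposition \ref{propboundu} is in hand. The only point needing a little care is the uniform bound on the half Laplacian (and its gradient) of $w$, but this follows from the explicit singular kernel together with the $C^3$ control just established.
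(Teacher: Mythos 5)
Your proposal is correct and follows essentially the same route as the paper: the bounds \eqref{boundw} are read off from Proposition \ref{propboundu} via the self-similar change of variables, and \eqref{boundw2} comes from the equation \eqref{halfWE} (and its $y_j$-derivative) once $(-\Delta)^{\frac12}w$ and $(-\Delta)^{\frac12}\partial_{y_j}w$ are bounded pointwise by splitting the singular integral at radius one and using the $C^2$ (resp.\ $C^3$) control — exactly the paper's estimate \eqref{boundhalfw}, written there with the symmetric second difference, which is the same cancellation you invoke via Taylor expansion and oddness.
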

  \begin{proof}
    The estimates of $\nabla w, \nabla^2w$, and $\nabla^3w$  are merely
    restatements of (\ref{boundu2}).

    Since
    \begin{eqnarray}\label{boundhalfw}
     \begin{aligned}
      |(-\Delta)^{\frac{1}{2}}w(y)|
      =&\ \frac{c_n}{2}|\int\frac{(w(y+y')+w(y-y')-2w(y)}{|y'|^{n+1}}dy'|
    \\\leq&\
     4c_n|\nabla^2 w|_{L^\infty(B_1(y))}\int_{B_1(0)}\frac{dy'}{|y'|^{n-1}}
     \\&\
     +4c_n|w|_{L^\infty(\mathbb{R}^n\setminus B_1(0))}
     \int_{\mathbb{R}^n\setminus B_1(0)}\frac{dy'}{|y'|^{n+1}}
     \\\leq&\ C,
     \end{aligned}
    \end{eqnarray}
 it is not hard to get the estimate
    of $w_s+y\cdot\nabla w$ by means of the equation (\ref{halfWE}).

    Differentiating (\ref{halfWE}) with respect to $y_j$, $j=1,\cdots,n$,
    we can get that
    \begin{eqnarray}\label{halfgradw}
      \frac{\partial}{\partial y_j}(w_s+y\cdot\nabla w)
      +(-\Delta)^{\frac{1}{2}}\frac{\partial w}{\partial y_j}
      +\beta \frac{\partial w}{\partial y_j}
      -p|w|^{p-2}w\frac{\partial w}{\partial y_j}=0.
    \end{eqnarray}
    Using (\ref{boundhalfw}) with $w$ replaced by $\frac{\partial w}{\partial y_j}$,
     it is easy to get the estimate of $\frac{\partial}{\partial y_j}(w_s+y\cdot\nabla w)$
    by  the estimates (\ref{boundw}) and
     the equation (\ref{halfgradw}).
  \end{proof}
For the equation \eqref{stableW}, we can obtain the following result.
 \begin{proposition}
     Let $w$ be a bounded solution of (\ref{stableW}) in $\mathbb{R}^n$,
    with $|w|\leq M$. Then
      \begin{eqnarray}\label{boundw1}
      \begin{aligned}
       |\nabla w|+|\nabla^2w|+|\nabla^3w|
      &\leq C,\\
      |y\cdot\nabla w|+|\nabla(y\cdot\nabla w)|&\leq C,
      \end{aligned}
    \end{eqnarray}
    for all $y\in\mathbb{R}^n$, and constant $C$ depending only
    on $n,p$ and $M$.
  \end{proposition}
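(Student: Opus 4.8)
The plan is to obtain both families of bounds as consequences of the parabolic estimates already proved, together with one direct manipulation of the nonlocal equation \eqref{stableW}.

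First I would note that a bounded solution $w$ of \eqref{stableW}, viewed as an $s$-independent function, is a solution of \eqref{halfWE} on $\R^{n+1}$ (the $w_s$ term simply vanishes) with $|w|\le M$; equivalently, $u(x,t):=(-t)^{-\beta}w(x/(-t))$ is a self-similar solution of \eqref{halfHE} on $\R^n\times(-\infty,0)$ with $(-t)^\beta|u(x,t)|=|w(x/(-t))|\le M$. Applying Proposition \ref{propboundw} in the case $T=\infty$, estimate \eqref{boundw} gives directly $|\nabla w|+|\nabla^2 w|+|\nabla^3 w|\le C$, while estimate \eqref{boundw2}, in which $w_s\equiv 0$, gives $|y\cdot\nabla w|+|\nabla(y\cdot\nabla w)|\le C$; in both cases the constant depends only on $n,p,M$. (Equivalently, one may invoke Proposition \ref{propboundu} for the self-similar $u$ and evaluate the resulting weighted bounds at $t=-1$, where $u(\cdot,-1)=w$ and $(-t)^{\beta+m}=1$.)

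Since the estimate for $y\cdot\nabla w$ is the new ingredient, I would also record a self-contained derivation of the second family from \eqref{stableW} directly. Rewriting \eqref{stableW} as $y\cdot\nabla w=-(-\Delta)^{1/2}w-\beta w+|w|^{p-1}w$ and using the pointwise bound $|(-\Delta)^{1/2}v(y)|\le C(n)\big(\|\nabla^2 v\|_{L^\infty}+\|v\|_{L^\infty}\big)$, obtained by splitting the symmetrized singular integral at $|y'|=1$ exactly as in \eqref{boundhalfw}, the right-hand side is controlled by the already established bounds on $w$ and $\nabla^2 w$ together with $|w|\le M$, so $|y\cdot\nabla w|\le C$. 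Differentiating \eqref{stableW} in $y_j$ yields
\[
(-\Delta)^{1/2}(\partial_{y_j}w)+\partial_{y_j}w+y\cdot\nabla(\partial_{y_j}w)+\beta\,\partial_{y_j}w-p|w|^{p-1}\partial_{y_j}w=0,
\]
and since $\partial_{y_j}(y\cdot\nabla w)=\partial_{y_j}w+y\cdot\nabla(\partial_{y_j}w)$, this gives $\partial_{y_j}(y\cdot\nabla w)=-(-\Delta)^{1/2}(\partial_{y_j}w)-\beta\,\partial_{y_j}w+p|w|^{p-1}\partial_{y_j}w$. Applying the same pointwise estimate to $\partial_{y_j}w$, which now requires the bound on $\nabla^3 w$, and using $|\nabla w|\le C$ and $|w|\le M$, we conclude $|\nabla(y\cdot\nabla w)|\le C$.

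The step that needs the most care, though it is bookkeeping rather than a genuine obstacle, is the legitimacy of these manipulations: one needs $w\in C^3(\R^n)$ with globally bounded derivatives in order to split the integral defining $(-\Delta)^{1/2}w$ as above and to commute $(-\Delta)^{1/2}$ with $\partial_{y_j}$, and one needs $u(x,t)=(-t)^{-\beta}w(x/(-t))$ to be a solution of \eqref{halfHE} in the sense required by Proposition \ref{propboundu}. The former is precisely the output of the first step, so the argument is self-consistent; the latter follows from the scaling identity $(-\Delta)_x^{1/2}[(-t)^{-\beta}w(x/(-t))]=(-t)^{-\beta-1}[(-\Delta)^{1/2}w](x/(-t))$ together with a one-line computation of $u_t$ using $\beta p=\beta+1$. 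In short, the proposition is essentially a corollary of Propositions \ref{propboundu} and \ref{propboundw} plus the explicit pointwise control of $(-\Delta)^{1/2}$.
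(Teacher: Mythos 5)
Your proposal is correct and follows essentially the same route as the paper: the paper's proof is exactly the observation that a bounded solution of \eqref{stableW} is a stationary ($w_s\equiv0$) solution of \eqref{halfWE}, so that \eqref{boundw} and \eqref{boundw2} from Proposition \ref{propboundw} (in the global, $T=\infty$ case) immediately give all five bounds with a constant depending only on $n,p,M$. Your additional self-contained derivation of the $y\cdot\nabla w$ estimates from the stationary equation is just a restatement of the argument already contained in the proof of Proposition \ref{propboundw} and is harmless, if redundant.
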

  \begin{proof}
    The estimates of $\nabla w, \nabla^2w, \nabla^3w, y\cdot\nabla w$  and
     $\nabla(y\cdot\nabla w)$ are merely
    restatements of (\ref{boundw}) and (\ref{boundw2}), since $w_s=0$ now.
  \end{proof}

  \section{Proof of Theorem \ref{classifyselfsim}: Classification of Self-similar solutions}\label{sec3}
In this section we prove a modified Pohozaev identity and prove Theorem \ref{classifyselfsim}. Following the line of ideas in \cite{GK}, we first obtain a modified Pohozaev-type identity (inequality). It is here where we first introduce the number $M_n$  at (\ref{Mn}). Observe that we do not use the Caffarelli-Silvestre extension \cite{CS}.
\begin{proposition}\label{pohozaev}
If $w(y)$ is a bounded solution of \eqref{stableW} in $\mathbb{R}^n$, for any $n\geq1$
and $p>1$, then
\begin{eqnarray}\label{comid}
 \begin{aligned}
  0=&\
  \Big(1-\frac{(p-1)n}{(p+1)}\Big)\frac{c_n}{4}
  \iint\frac{(w(y')-w(y))^2}{|y'-y|^{n+1}}\rho(y')dy'dy
\\&\
  -\frac{(p-1)}{(p+1)}\frac{c_n}{4}
  \iint\frac{(w(y')-w(y))^2}{|y'-y|^{n+1}}(y'\cdot\nabla\rho)dy'dy
    \\&\
    +\frac{c_n}{2}\iint\frac{(w(y')-w(y))(\rho(y')-\rho(y))}
   {|x-y|^{n+1}}(y\cdot\nabla w)dy'dy
  \\&\
    +\int(y\cdot\nabla w)^2\rho dy.
 \end{aligned}
\end{eqnarray}
As a consequence, we have
\begin{eqnarray}\label{cominq}
 \begin{aligned}
   0\geq&\
  \Big(\frac{4-c_nM_n}{4}-\frac{(p-1)n}{(p+1)}\Big)\frac{c_n}{4}
  \iint\frac{(w(y')-w(y))^2}{|y'-y|^{n+1}}\rho(y')dy'dy
\\&\
  -\frac{(p-1)}{(p+1)}\frac{c_n}{4}
  \iint\frac{(w(y')-w(y))^2}{|y'-y|^{n+1}}(y'\cdot\nabla\rho)dy'dy
 \end{aligned}
\end{eqnarray}
with $M_n$ defined by \eqref{Mn}.
\end{proposition}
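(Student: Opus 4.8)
The plan is to derive the identity \eqref{comid} by testing the equation \eqref{stableW} against the two natural multipliers $\rho$ and $y\cdot\nabla w$, using the explicit identities \eqref{kernel}, \eqref{fradot}, \eqref{kerfradot} for the action of $(-\Delta)^{1/2}$ on $\rho$ and $x\cdot\nabla\rho$. First I would record the bilinear form of the half Laplacian: for suitable $f,g$,
\[
  \int g\,(-\Delta)^{1/2}f\,dy \;=\; \frac{c_n}{2}\iint\frac{(f(y')-f(y))(g(y')-g(y))}{|y'-y|^{n+1}}\,dy'dy,
\]
which is symmetric and is the source of every double integral in \eqref{comid}; the a priori bounds of Propositions \ref{propboundu}--\ref{propboundw} (decay of $\rho$, boundedness of $w$, $\nabla w$, $y\cdot\nabla w$) guarantee all these integrals converge, so the integration by parts is legitimate. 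Testing \eqref{stableW} against $\rho$ gives one relation among $\int w\,(-\Delta)^{1/2}\rho$-type terms, $\int (y\cdot\nabla w)\rho$, $\beta\int w\rho$ and $\int|w|^{p-1}w\rho$; testing against $y\cdot\nabla w$ gives the genuinely Pohozaev-type relation. The key algebraic step is that $(-\Delta)^{1/2}$ applied to the dilation generator produces, via \eqref{fradot}, a commutator $[\,(-\Delta)^{1/2},\,y\cdot\nabla\,]=(-\Delta)^{1/2}$, and the nonlinear term $|w|^{p-1}w\,(y\cdot\nabla w)=\frac{1}{p+1}y\cdot\nabla(|w|^{p+1})$ integrates by parts against $\rho$ to yield the factor $\frac{1}{p+1}$ together with $\mathrm{div}(y\rho)=n\rho+y\cdot\nabla\rho$ — this is where the coefficients $1-\frac{(p-1)n}{p+1}$ and $-\frac{p-1}{p+1}$ in front of the two quadratic forms appear. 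Combining the two tested identities to cancel the $\int w\,(-\Delta)^{1/2}\rho$ and $\beta\int w\rho$ contributions, and rewriting everything in bilinear-form notation, should produce \eqref{comid} exactly, with the last term $\int(y\cdot\nabla w)^2\rho$ coming from pairing $y\cdot\nabla w$ with the transport term $y\cdot\nabla w$ in the equation.

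For the consequence \eqref{cominq} I would estimate the two ``error'' terms in \eqref{comid} that are not manifestly signed, namely the cross term
\[
  \frac{c_n}{2}\iint\frac{(w(y')-w(y))(\rho(y')-\rho(y))}{|y'-y|^{n+1}}(y\cdot\nabla w)\,dy'dy
\]
plus $\int(y\cdot\nabla w)^2\rho\,dy$. The idea is to absorb the cross term into $\int(y\cdot\nabla w)^2\rho$ plus a multiple of the first quadratic form $\iint\frac{(w(y')-w(y))^2}{|y'-y|^{n+1}}\rho(y')\,dy'dy$. Concretely, for fixed $y$ one writes $y\cdot\nabla w$ against the $y'$-integral and applies Cauchy--Schwarz in the measure $\frac{dy'}{|y'-y|^{n+1}}$, splitting $\mathbb{R}^n=\Omega_y\cup(\mathbb{R}^n\setminus\Omega_y)$: on $\Omega_y$ one pairs $\frac{(\rho(y')-\rho(y))}{|y'-y|^{(n+1)/2}}$ with $\frac{(w(y')-w(y))}{|y'-y|^{(n+1)/2}}\sqrt{\rho(y')}$ and uses the weight $\frac{1}{\sqrt{\rho(y')}}$ on the $\rho$-difference factor, while on the complement one uses the weight $\rho(y)$ — this is exactly the origin of the two suprema defining $M_n$ in \eqref{Mn}. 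After Cauchy--Schwarz and Young's inequality with the sharp constant, the cross term is bounded by $\int(y\cdot\nabla w)^2\rho\,dy$ (which cancels the genuinely-good last term of \eqref{comid}) plus $\frac{c_nM_n}{4}\cdot\frac{c_n}{4}\iint\frac{(w(y')-w(y))^2}{|y'-y|^{n+1}}\rho(y')\,dy'dy$; moving this to the other side turns the coefficient $1$ into $\frac{4-c_nM_n}{4}$, yielding \eqref{cominq}.

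The main obstacle I anticipate is twofold. First, the bookkeeping in deriving \eqref{comid}: one must be careful that every symmetrization of a double integral is justified (no conditionally convergent rearrangements), and that the interior terms coming from $(-\Delta)^{1/2}(y\cdot\nabla\rho)$ via \eqref{kerfradot} — which involves the second-order object $y\cdot\nabla(y\cdot\nabla\rho)$ — recombine correctly after integration by parts against $w^2$; a stray sign or an overlooked boundary term at infinity would spoil the identity. Second, and more delicate, is getting the Cauchy--Schwarz/Young split in the second part to land with precisely the constant $\frac{c_nM_n}{4}$ and not something larger: the decomposition into $\Omega_y$ and its complement, and the choice of which power of $\rho$ to distribute to which factor, has to match the two terms in \eqref{Mn} exactly, and one must check that the off-diagonal (one point inside $\Omega_y$, one outside) contributions are correctly accounted for. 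Once those two points are handled, \eqref{cominq} follows immediately, and this inequality is what will drive the classification in Theorem \ref{classifyselfsim}: when $1<p\le p_*(n)$ the coefficient $\frac{4-c_nM_n}{4}-\frac{(p-1)n}{p+1}$ is nonnegative and, together with $y'\cdot\nabla\rho\le 0$, forces $w(y')\equiv w(y)$, i.e. $w$ constant.
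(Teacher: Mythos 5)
Your proposal is essentially the paper's own proof: the weighted identity for $\int\varphi\, w\,(-\Delta)^{1/2}w\,dy$ with $\varphi=\rho$ and $\varphi=y\cdot\nabla\rho$, the relations \eqref{kernel} and \eqref{kerfradot}, a Pohozaev identity from the multiplier $\rho\,(y\cdot\nabla w)$ (which the paper packages equivalently as $\frac{d}{d\lambda}E[w_\lambda]\big|_{\lambda=1}$ computed two ways), and then Cauchy--Schwarz/Young with the $\Omega_y$ split and $\epsilon=\frac{c_n}{4}$, which lands exactly on the constant $\frac{c_nM_n}{4}$ while cancelling $\int(y\cdot\nabla w)^2\rho\,dy$. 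The only correction is one of counting: you need \emph{three} tested identities, not two --- besides $\rho w$ and the Pohozaev multiplier, the multiplier $(y\cdot\nabla\rho)w$ is indispensable to eliminate the terms $\int|w|^{p+1}(y\cdot\nabla\rho)\,dy$ and $\int|w|^{2}(y\cdot\nabla\rho)\,dy$ produced by $\operatorname{div}(y\rho)=n\rho+y\cdot\nabla\rho$, the three being combined with weights $\frac{n}{p+1}$, $\frac{1}{p+1}$, $1$; your invocation of \eqref{kerfradot} and of ``integration by parts against $w^2$'' shows you already have this third identity in hand.
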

\begin{proof}
   Let $w\in C^{2}(\mathbb{R}^n)$ be a function such that $\|w\|_{C^{2}(\mathbb{R}^n)}<\infty$  and let $\varphi(x)$ be a function such that $|\varphi(x)|\leq \frac{C}{(1+|x|^2)^\frac{n+1}{2}}$, we have the following identity
  \begin{eqnarray}\label{intid1}
   \begin{aligned}
   \int\varphi w (-\Delta)^{\frac{1}{2}} w dy=&\
    \frac{c_n}{2}\iint\frac{(w(y')-w(y))^2}{|y'-y|^{n+1}}\varphi(y')dy'dy\\
    &\ +\frac{1}{2}\int |w|^2(-\Delta)^{\frac{1}{2}} \varphi dy.
    \end{aligned}
  \end{eqnarray}
  Indeed, \eqref{boundhalfw} yields that $(-\Delta)^{\frac{1}{2}}w$ is bounded. Using the dominated convergence theorem and symmetrizing in $y$ and $y'$, $B_\delta=B_\delta(y)$, we have
  \begin{eqnarray*}\begin{aligned}
    &\int\varphi w (-\Delta)^{\frac{1}{2}} w dy\\
    =&\ c_n\lim_{\delta\to0}\int_{\R^n} \varphi(y) w(y)(\int_{\R^n\setminus B_\delta}\frac{w(y)-w(y')}{|y'-y|^{n+1}}dy') dy
    \\=&\
    \frac{c_n}{2}\lim_{\delta\to0}\iint_{(\R^n\setminus B_\delta)^2}\frac{(w(y)-w(y'))(\varphi(y) w(y)-\varphi(y') w(y'))}
     {|y'-y|^{n+1}}dy'dy
     \\&\ +\frac{c_n}{2}\lim_{\delta\to0}\int_{B_\delta}\varphi(y) w(y)\int_{\R^n\setminus B_\delta}\frac{w(y)-w(y')}{|y'-y|^{n+1}}dy'dy.\end{aligned}
  \end{eqnarray*}
  It is easy to see that
  \begin{eqnarray*}
  \begin{aligned}
    &|\int_{B_\delta}\varphi(y) w(y)\int_{\R^n\setminus B_\delta}\frac{w(y)-w(y')}{|y'-y|^{n+1}}dy'dy|\\
    &\leq C(\|w\|_{L^\infty}+\|\nabla^2 w\|_{L^\infty})\delta^n\to0
    \end{aligned}
  \end{eqnarray*}
  as  $\delta\to0$.  By the assumptions on $w$ and $\varphi$, we have
  \begin{eqnarray}
   \begin{aligned}
    \iint\frac{(w(y')-w(y))^2}{|y'-y|^{n+1}}|\varphi(y')|dy'dy&\leq
    C\iint_{B_1(y')}\frac{|\varphi(y')|}{|y'-y|^{n-1}}dydy'
    \nonumber\\&\
    +C\iint_{\mathbb{R}^n\setminus B_1(y')}\frac{|\varphi(y')|}{|y'-y|^{n+1}}dydy'
    \nonumber\\&\leq
    C\int|\varphi(y')|dy'<\infty.
    \end{aligned}
  \end{eqnarray}
  Therefore, using Fubini's theorem we obtain
   \begin{eqnarray*}\begin{aligned}
    &\int\varphi w (-\Delta)^{\frac{1}{2}} w dy\\
    =&\
    \frac{c_n}{2}\lim_{\delta\to0}\iint_{(\R^n\setminus B_\delta)^2}\frac{(w(y)-w(y'))^2}{|y'-y|^{n+1}}\varphi(y)dy'dy+
    \\&\
     \frac{c_n}{2}\lim_{\delta\to0}\iint_{(\R^n\setminus B_\delta)^2}\frac{(w(y)-w(y'))(\varphi(y) w(y')-\varphi(y') w(y'))}
     {|y'-y|^{n+1}}dy'dy
     \\=&\
    \frac{c_n}{2}\lim_{\delta\to0}\iint_{(\R^n\setminus B_\delta)^2}\frac{(w(y)-w(y'))^2}{|y'-y|^{n+1}}\varphi(y)dy'dy
    \\&\
     +\frac{c_n}{4}\lim_{\delta\to0}\iint_{(\R^n\setminus B_\delta)^2}\frac{(w(y)^2-w(y')^2)(\varphi(y)-\varphi(y'))}
     {|y'-y|^{n+1}}dy'dy
     \\=&\
    \frac{c_n}{2}\iint\frac{(w(y)-w(y'))^2}{|y'-y|^{n+1}}\varphi(y)dy'dy
     \\&\
     +\frac{c_n}{2}\iint\frac{\varphi(y)-\varphi(y')}
     {|y'-y|^{n+1}}w(y)^2dy'dy,\end{aligned}
  \end{eqnarray*}
  which is (\ref{intid1}).

  Multiplying  \eqref{stableW} by $\rho w$ and using (\ref{intid1}) with
   $\varphi$ replaced by $\rho$ and using integration by parts, we have
  \begin{eqnarray}\nonumber
   \begin{aligned}
    0=&\ \frac{c_n}{2}\iint\frac{(w(y')-w(y))^2}{|y'-y|^{n+1}}\rho(y')dy'dy+\frac{1}{2}
    \int |w|^2(-\Delta)^{\frac{1}{2}} \rho dy\\
    &\ -\frac{n}{2}\int |w|^2\rho dy-\frac{1}{2}\int|w|^2(y\cdot\nabla\rho) dy
    +\beta\int|w|^2\rho dy-\int|w|^{p+1}\rho dy.
    \end{aligned}
  \end{eqnarray}
  Using equation (\ref{kernel}), we have the first identity
  \begin{eqnarray}\label{id1}
   \begin{aligned}
     0=&\ \frac{c_n}{2}\iint\frac{(w(y')-w(y))^2}{|y'-y|^{n+1}}\rho(y')dy'dy
    +\beta\int|w|^2\rho dy
     -\int|w|^{p+1}\rho dy.
    \end{aligned}
  \end{eqnarray}
  Since $w$ is bounded, the estimate (\ref{boundw1}) implies that  $\|w\|_{C^{2}(\mathbb{R}^n)}<\infty$. In view of this fact and the observation that
  \begin{eqnarray}
    \frac{R}{2}\int_{\partial B_R}|w|^2\rho dy\leq\frac{C}{R}\rightarrow 0\
    \text{as}\  R\rightarrow\infty,
    \nonumber
  \end{eqnarray}
  the procedure of using integration by parts can be justified. Similarly, multiplying the equation \eqref{stableW} by $(y\cdot\nabla\rho)w$ and using (\ref{intid1}) with
   $\varphi$ replaced by $y\cdot\nabla \rho$ and using integration by parts, we have
  \begin{eqnarray*}
   \begin{aligned}
    0=&\ \frac{c_n}{2}\iint\frac{(w(y')-w(y))^2}{|y'-y|^{n+1}}(y'\cdot\nabla\rho)dy'dy
    +\frac{1}{2}\int |w|^2(-\Delta)^{\frac{1}{2}} (y\cdot\nabla\rho) dy\\
    &\ -\frac{n}{2}\int|w|^2(y\cdot\nabla\rho)dy
    -\frac{1}{2}\int|w|^2y\cdot\nabla
    (y\cdot\nabla\rho)dy
\\&\
  +\beta\int|w|^2(y\cdot\nabla\rho)dy
    -\int|w|^{p+1}(y\cdot\nabla\rho)dy.\end{aligned}
  \end{eqnarray*}
  Using the equation (\ref{kerfradot}), we obtain the second identity
  \begin{eqnarray}\label{id2}
    \begin{aligned}
    0=&\ \frac{c_n}{2}\iint\frac{(w(y')-w(y))^2}{|y'-y|^{n+1}}(y'\cdot\nabla\rho)dy'dy+
    \frac{n}{2}\int|w|^2\rho dy
    \\&\ +(\frac{1}{2}+\beta)\int|w|^2(y\cdot\nabla\rho)dy
    -\int|w|^{p+1}(y\cdot\nabla\rho)dy.\end{aligned}
  \end{eqnarray}
  Since
  \begin{eqnarray}
    \frac{R}{2}\int_{\partial B_R}|w|^2(y\cdot\nabla\rho) dy\leq\frac{C}{R}
    \rightarrow 0\
    \text{as}\  R\rightarrow\infty,
  \end{eqnarray}
the procedure of using integration by parts can also be justified.
To get the third identity, we define a quantity
  \begin{eqnarray}\label{quantityE}
   \begin{aligned}
    E[u]:=&\ \frac{c_n}{4}\iint\frac{(u(y')-u(y))^2}{|y'-y|^{n+1}}\rho(y')dy'dy+
    \frac{\beta}{2}\int|u|^2\rho dy
    \\&\
    -\frac{1}{p+1}\int|u|^{p+1}\rho dy.\end{aligned}
  \end{eqnarray}
 Let $w_\lambda(y)=w(\lambda y)$, then
  \begin{eqnarray}\label{leftlambdaE}
   \begin{aligned}
    \frac{dE[w_\lambda]}{d\lambda}|_{\lambda=1}=&\
  \frac{c_n}{2}\iint\frac{(w(y')-w(y))(y'\cdot\nabla w-y\cdot\nabla w)}
   {|y'-y|^{n+1}}\rho(y')dy'dy
\\&\
  +\beta\int\rho w(y\cdot\nabla w)dy
  -\int\rho |w|^{p-1}w(y\cdot \nabla w)dy
\\=&\
  -\frac{c_n}{2}\iint\frac{(w(y')-w(y))(\rho(y')-\rho(y))}
   {|y'-y|^{n+1}}(y\cdot\nabla w)dy'dy
\\&\
  +\int\rho(y\cdot\nabla w)(-\Delta)^{\frac{1}{2}} w dy
  +\beta\int\rho w(y\cdot\nabla w)dy
\\&\
  -\int\rho |w|^{p-1}w(y\cdot \nabla w)dy
\\=&\
   -\frac{c_n}{2}\iint\frac{(w(y')-w(y))(\rho(y')-\rho(y))}
   {|y'-y|^{n+1}}(y\cdot\nabla w)dy'dy
\\&\
  -\int(y\cdot\nabla w)^2\rho dy.
   \end{aligned}
  \end{eqnarray}
 By (\ref{boundw1}), we have
  \begin{eqnarray*}
   \begin{aligned}
    \iint\frac{(y'\cdot\nabla w-y\cdot\nabla w)^2}
    {|y'-y|^{n+1}}\rho(y')dxdy\leq&\
    C\iint_{B_1(y')}\frac{\rho(y')}{|y'-y|^{n-1}}dydy'
   \\&\
    +C\iint_{\mathbb{R}^n\setminus B_1(y')}\frac{\rho(y')}{|y'-y|^{n+1}}dydy'
    \nonumber\\\leq&\
    C\int\rho(y')dy'<\infty
    \end{aligned}
   \end{eqnarray*}
    and
    \begin{eqnarray*}
\int(y\cdot\nabla w)^2\rho dy<\infty.
  \end{eqnarray*}
  Therefore, the  procedure of differentiating $E[w_\lambda]$ with respect to $\lambda$
   can be justified. On the other hand,
  \begin{eqnarray*}
   \begin{aligned}
    E[w_\lambda]=&\ \frac{\lambda^{1-n}c_n}{4}\iint\frac{(w(y')-w(y))^2}{|y'-y|^{n+1}}
    \rho_\lambda(y')dy'dy+\frac{\beta\lambda^{-n}}{2}\int|w|^2\rho_\lambda dy
    \\&\
    -\frac{\lambda^{-n}}{p+1}\int|w|^{p+1}\rho_\lambda dy\end{aligned}
  \end{eqnarray*}
  with $\rho_\lambda(y)=\rho(\frac{y}{\lambda})$.
  Therefore,
  \begin{eqnarray}\label{diflambdaE2}
   \begin{aligned}
    \frac{dE[w_\lambda]}{d\lambda}|_{\lambda=1}=&\
    \frac{1-n}{4}c_n\iint\frac{(w(y')-w(y))^2}{|y'-y|^{n+1}}
    \rho(y')dy'dy
    \\&\
    -\frac{c_n}{4}\iint\frac{(w(y')-w(y))^2}{|y'-y|^{n+1}}
    (y\cdot\nabla\rho)dy'dy
    \\ &\
    -\frac{n\beta}{2}\int|w|^2\rho dy'
    -\frac{\beta}{2}\int|w|^2(y\cdot\nabla\rho) dy
    \\&\
    +\frac{n}{p+1}\int|w|^{p+1}\rho dy
    +\frac{1}{p+1}\int|w|^{p+1}(y\cdot\nabla\rho) dy.
    \end{aligned}
  \end{eqnarray}
  By (\ref{leftlambdaE}) and \eqref{diflambdaE2}, we  can obtain the third identity
   \begin{eqnarray}\label{id3}
    \begin{aligned}
    0=&\
    \frac{1-n}{4}c_n
    \iint\frac{(w(y')-w(y))^2}{|y'-y|^{n+1}}
    \rho(y')dy'dy
    \\&\
    -\frac{c_n}{4}\iint\frac{(w(y')-w(y))^2}{|y'-y|^{n+1}}
    (y'\cdot\nabla\rho)dy'dy
    \\ &\
    -\frac{n\beta}{2}\int|w|^2\rho dy'
    -\frac{\beta}{2}\int|w|^2(y\cdot\nabla\rho) dy
    \\&\
    +\frac{n}{p+1}\int|w|^{p+1}\rho dy
    +\frac{1}{p+1}\int|w|^{p+1}(y\cdot\nabla\rho) dy
    \\&\
     +\frac{c_n}{2}\iint\frac{(w(y')-w(y))(\rho(y')-\rho(y))}
   {|y'-y|^{n+1}}(y\cdot\nabla w)dy'dy
\\&\
    +\int(y\cdot\nabla w)^2\rho dy.
    \end{aligned}
  \end{eqnarray}
Combining the identities (\ref{id1}), (\ref{id2}) and (\ref{id3})
in the following way:
\begin{eqnarray*}
  \frac{n}{p+1}\cdot(\ref{id1})+\frac{1}{p+1}\cdot(\ref{id2})
  +1\cdot(\ref{id3}),
\end{eqnarray*}
we have derived \eqref{comid}.

Now we estimate the third term in the above Pohozaev's type identity (\ref{comid})  using Cauchy-Schwarz inequality and H\"older inequality,
\begin{eqnarray}
 \begin{aligned}\nonumber
  I:=&\
  |\int\big(\int\frac{(\rho(y')-\rho(y))(w(y')-w(y))}{|y'-y|^{n+1}}dy'\big)
  (y\cdot\nabla w)dy|
   \\  \leq&\
  \frac{\epsilon}{2}\int\big(
  \int\frac{(\rho(y')-\rho(y))(w(y')-w(y))}{|y'-y|^{n+1}}dy'\big)^2\frac{1}{\rho(y)}dy
   \\&\
  +\frac{1}{2\epsilon}\int\rho(y\cdot\nabla w)^2dy
   \\\leq&\
 \frac{\epsilon}{2}\int\big(
  \int_{\Omega_y}\frac{(\rho(y')-\rho(y))^2}{|y'-y|^{n+1}}\frac{1}{\rho(y')}dy'\big)\big(
  \int_{\Omega_y}\frac{(w(y')-w(y))^2}{|y'-y|^{n+1}}\rho(y')dy'\big)
  \frac{1}{\rho(y)}dy
  \\&\
  +\frac{\epsilon}{2}\int\big(
  \int_{\mathbb{R}^n\setminus\Omega_y}\frac{(\rho(y')-\rho(y))^2}{|y'-y|^{n+1}}dy'\big)
  \big(\int_{\mathbb{R}^n\setminus\Omega_y}\frac{(w(y')-w(y))^2}
  {|y'-y|^{n+1}}dy'\big)\frac{1}{\rho(y)}dy
   \\&\
  +\frac{1}{2\epsilon}\int\rho(y\cdot\nabla w)^2dy,
  \end{aligned}
\end{eqnarray}
here $\epsilon>0$ is a constant which will be determined later.
Let
\begin{eqnarray}\label{f12}
  \begin{aligned}
    f_1(y):=&\ \frac{1}{\rho(y)}
  \int_{\Omega_y}\frac{(\rho(y')-\rho(y))^2}{|y'-y|^{n+1}}\frac{1}{\rho(y')}dy',
  \\
    f_2(y):=&\ \frac{1}{\rho(y)^2}
  \int_{\mathbb{R}^n\setminus\Omega_y}\frac{(\rho(y')-\rho(y))^2}{|y'-y|^{n+1}}
  dy',
  \end{aligned}
\end{eqnarray}
and let
\begin{eqnarray}
  \Omega_y=B_{|y|}(0)=\{y'\in\mathbb{R}^n\ : \ |y'|<|y|\}.
\end{eqnarray}
We have
\begin{eqnarray}
 \begin{aligned}
  I \leq&\
 \frac{\epsilon}{2}\int f_1(y)\big(
  \int_{\Omega_y}\frac{(w(y')-w(y))^2}{|y'-y|^{n+1}}\rho(y')dy'\big)dy+\frac{1}{2\epsilon}\int\rho(y\cdot\nabla w)^2dy
  \\&\
  +\frac{\epsilon}{2}\int f_2(y)
  \big(\int_{\mathbb{R}^n\setminus\Omega_y}\frac{(w(y')-w(y))^2}
  {|y'-y|^{n+1}}\rho(y)dy'\big)dy
  \\ \leq&\
 \frac{\epsilon}{2}(\sup_{y\in\mathbb{R}^n}f_1(y))\iint
 \frac{(w(y')-w(y))^2}{|y'-y|^{n+1}}\rho(y')dy'dy +\frac{1}{2\epsilon}\int\rho(y\cdot\nabla w)^2dy
  \\&\
  +\frac{\epsilon}{2}(\sup_{y\in\mathbb{R}^n}f_2(y))\iint\frac{(w(y')-w(y))^2}
  {|y'-y|^{n+1}}\rho(y)dy'dy.
 \end{aligned}
\end{eqnarray}
Let
$$
  M_n=\sup_{y\in\mathbb{R}^n}f_1(y)+\sup_{y\in\mathbb{R}^n}f_2(y).
$$
Then, by symmetry of $y$ and $y'$, we get
\begin{eqnarray}\label{estOmega}
  I&\leq&
  \frac{M_n\epsilon}{2}
  \iint\frac{(w(y')-w(y))^2}{|y'-y|^{n+1}}\rho(y')dy'dy
  +\frac{1}{2\epsilon}\int\rho(y\cdot\nabla w)^2dy.
\end{eqnarray}
Selecting $\epsilon=\frac{c_n}{4}$ in (\ref{estOmega}) and plugging it into
(\ref{comid}) yields \eqref{cominq}.
\end{proof}
If $c_nM_n<4$ and $1<p\leq p_*(n)$,  the coefficients in the right hand side of \eqref{cominq} will become positive.
Therefore, we can obtain the following result.
\begin{theorem}\label{PW}
  Let $n \leq 4, 1< p \leq p_*(n)$ and let $w$ be a bounded solution of the equation (\ref{stableW}) in $\mathbb{R}^n$.  Then $w\equiv0$ or $w\equiv\pm \beta^\beta$.
\end{theorem}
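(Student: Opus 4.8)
The plan is to read the classification straight off inequality \eqref{cominq} of Proposition \ref{pohozaev}: once that Pohozaev-type estimate is available, only a short sign-and-algebra argument remains, and the role of the exponent $p_*(n)$ is exactly to make both terms on the right of \eqref{cominq} nonnegative, so that the inequality pins them to zero.

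First I would identify the sign of the leading coefficient. Writing $a=\tfrac{c_nM_n}{4}$, which satisfies $a<1$ for $n\le 4$ by the numerical bounds recorded in the Remark following \eqref{pn1}, I would observe that
\[
A:=\Big(\frac{4-c_nM_n}{4}-\frac{(p-1)n}{p+1}\Big)\frac{c_n}{4}
=\frac{(1-a)(p+1)-(p-1)n}{p+1}\cdot\frac{c_n}{4},
\]
and that, since $p+1>0$ and $c_n>0$, the sign of $A$ equals the sign of $(1-a)(p+1)-(p-1)n=p(1-a-n)+(n+1-a)$. As $1-a-n<0$, this quantity is $\ge 0$ exactly when $p\le \frac{n+1-a}{n-1+a}=p_*(n)$, with $A>0$ for $p<p_*(n)$ and $A=0$ for $p=p_*(n)$. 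Next I would note that the second term on the right of \eqref{cominq},
\[
B:=-\frac{p-1}{p+1}\frac{c_n}{4}\iint\frac{(w(y')-w(y))^2}{|y'-y|^{n+1}}\,(y'\cdot\nabla\rho)\,dy'dy,
\]
is nonnegative: the prefactor is negative because $p>1$, while the integrand is a nonnegative multiple of $y'\cdot\nabla\rho(y')=-(n+1)\frac{|y'|^2\rho(y')}{1+|y'|^2}\le 0$, so the integral is $\le 0$ and hence $B\ge 0$.

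Setting $D:=\iint\frac{(w(y')-w(y))^2}{|y'-y|^{n+1}}\rho(y')\,dy'dy\ge 0$, which is finite by the estimates established in the proof of Proposition \ref{pohozaev}, the inequality \eqref{cominq} reads $0\ge AD+B$ with $AD\ge 0$ and $B\ge 0$, forcing $AD=0$ and $B=0$. If $p<p_*(n)$ then $A>0$, hence $D=0$; since $\rho>0$ everywhere and $|y'-y|^{-(n+1)}>0$ off the diagonal, this gives $w(y')=w(y)$ for a.e.\ $(y,y')$, so $w$ is constant (it is continuous by the regularity estimates for bounded solutions of \eqref{stableW}). If $p=p_*(n)$ the vanishing of $A$ is no longer useful, but $B=0$ still is: since $y'\cdot\nabla\rho(y')<0$ for every $y'\ne 0$, the nonpositive integrand defining $B$ must vanish a.e., again forcing $w(y')=w(y)$ for a.e.\ $(y,y')$ with $y'\ne 0$, and $w$ is constant. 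Finally, substituting $w\equiv c$ into \eqref{stableW} (with $(-\Delta)^{1/2}c=0$ and $\nabla c=0$) gives $\beta c=|c|^{p-1}c$, so $c=0$ or $|c|=\beta^{1/(p-1)}=\beta^\beta$; thus $w\equiv 0$ or $w\equiv\pm\beta^\beta$, as claimed.

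I expect the only genuinely delicate point to be the borderline case $p=p_*(n)$: there the leading coefficient $A$ vanishes, so one cannot conclude from $AD\le 0$ directly and must instead exploit the strict negativity of $y'\cdot\nabla\rho$ away from the origin to kill the Gagliardo-type energy $D$. All the real analytic work—justifying the integrations by parts, proving the identities \eqref{id1}, \eqref{id2}, \eqref{id3}, and estimating the cross term by means of the constant $M_n$—is already contained in Proposition \ref{pohozaev}, so the remaining proof is short.
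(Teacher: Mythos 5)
Your proof is correct and follows exactly the route the paper intends: read the classification off inequality \eqref{cominq}, observe that both terms on its right-hand side are nonnegative when $1<p\le p_*(n)$, and conclude $w$ is constant, whence $w\equiv 0$ or $w\equiv\pm\beta^\beta$ from \eqref{stableW}. Your treatment of the borderline case $p=p_*(n)$ (where the leading coefficient vanishes and one must instead use the strict negativity of $y'\cdot\nabla\rho$ away from the origin in the second term) is in fact slightly more careful than the paper's one-line remark that "the coefficients \ldots become positive."
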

\begin{proof}[Proof of Theorem \ref{classifyselfsim}]
It follows from \eqref{stableW} and Theorem \ref{PW}.
\end{proof}
\section{Monotonicity Formula and Proof of Theorems \ref{classifytypeI} and \ref{halfliouville}}
In this section we derive a modified Giga-Kohn monotonicity formula and prove  Theorems \ref{classifytypeI} and \ref{halfliouville}. Let $0<T\leq \infty$. We consider solutions $u$ of (\ref{halfHE})
which satisfy
\begin{eqnarray}
  |u(x,t)|\leq C(-t)^{-\beta},\quad
  (x,t)\in\mathbb{R}^n\times(-T,0)\label{boundu}
\end{eqnarray}
and the decay condition: fix  $\delta>0$, for any $-T<t''<t'<0$, there exists a constant $C(t',t'')<\infty$ such that
\begin{eqnarray}
\label{nablaudecay}
|\nabla u|(x,t)|\leq \frac{C(t',t'')}{1+|x|^\delta},\ (x,t)\in\R^n\times[t'', t'].
\end{eqnarray}

Let $u_{\lambda}(x,t)=\lambda^\beta u(\lambda x, \lambda t)$, then $u_{\lambda}(x, t)$
remain bounded independently of $\lambda$ away from $t=0$ since
\begin{eqnarray}
  |\lambda^\beta u(\lambda x, \lambda t)|<
  C\lambda^\beta(-\lambda t)^{-\beta}
  =C(-t)^{-\beta}.
\end{eqnarray}
By Proposition \ref{propboundu}, we can take weak limits
\begin{eqnarray}
  \lim_{\lambda_j\rightarrow0}u_{\lambda_j}=u_0,
  \quad
  \lim_{\lambda_j'\rightarrow\infty}u_{\lambda_j'}=u_\infty,
\end{eqnarray}
for suitable sequences $\lambda_j\rightarrow0$, $\lambda_j'\rightarrow\infty$.

Heuristically, the rescaled function $w(y,s)=(-t)^\beta u(x,t)$, with
$x=(-t)y$ and $t=-e^{-s}$, is a bounded solution of (\ref{halfWE}). By \eqref{nablaudecay}, $w$ also satisfies the decay condition: fix $\delta>0$, for any $-\ln T<s''<s'<\infty$, there exists a constant $C(s'',s')<\infty$ such that
\begin{eqnarray}
|y\cdot\nabla w(y,s)|\leq C(s'',s')\frac{|y|}{1+|y|^{\delta}},\ (y,s)\in\R^n\times[s'', s'].\label{boundygradw}
\end{eqnarray}
Then it is easy to get the estimate for $w_s$ by means of the equation (\ref{halfWE})
, i.e.,
\begin{eqnarray}\label{boundws}
 | w_s(y,s)|\leq C(s'',s')\frac{|y|}{1+|y|^{\delta}},\ (y,s)\in\R^n\times[s'', s'].
\end{eqnarray}
In the following, we use $w(y)$ instead of
$w(y,s)$ in all double integral for simplicity.

At the beginning, we define a quantity which plays the role of `energy' in our situation,
\begin{eqnarray}\label{quantityhatE}
  \begin{aligned}
  \hat{E}[w](s):=&\ \frac{c_n}{4}\iint\frac{(w(y')-w(y))^2}{|y'-y|^{n+1}}\rho(y')dy'dy+
    \frac{\beta}{2}\int|w|^2\rho dy
\\&\
  +\frac{1}{2(p+1)}\int |w|^2(-\Delta)^{\frac{1}{2}}\rho dy
  -\frac{1}{p+1}\int|w|^{p+1}\rho dy.\end{aligned}
\end{eqnarray}
Then we have the following monotonicity formula.
\begin{proposition}\label{monotonicity}
 Let $n\leq 4, -\ln T<a<b<\infty$ and let  $w$ be a bounded solution of (\ref{halfWE}) on $\mathbb{R}^n\times(-\ln T,\infty)$ satisfying
  \eqref{boundygradw}, then
  \begin{eqnarray}\label{energyinq}
   \begin{aligned}
    -\hat{E}[w]|_a^b=&\
    \hat{E}[w](a)-\hat{E}[w](b)
    \\\geq&\
       \big(1-\frac{c_n}{4\epsilon}\big)\int_a^b\int(w_s+y\cdot\nabla w)^2\rho dyds
   \\&\
   +d_{n,p,\epsilon}
   \int_a^b\iint\frac{(w(y')-w(y))^2}{|y'-y|^{n+1}}\rho(y')dy'dyds
  \\&\
  -d_{n,p}
  \int_a^b\iint\frac{(w(y')-w(y))^2}{|y'-y|^{n+1}}(y'\cdot\nabla\rho)dy'dyds,
   \end{aligned}
  \end{eqnarray}
 where $\epsilon\in[\frac{c_n}{4}, \frac{(p+1)-(p-1)n}{(p+1)M_n}]$, $d_{n,p,\epsilon}=(\frac{1-M_n\epsilon}{4}-\frac{(p-1)n}{4(p+1)})c_n\geq0$ and
  $d_{n,p}=\frac{(p-1)c_n}{4(p+1)}$.
\end{proposition}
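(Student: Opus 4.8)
\noindent\emph{Proof strategy.} The plan is to follow the scheme of Proposition~\ref{pohozaev}, now upgraded to the parabolic setting: differentiate $\hat E[w](s)$ along the flow \eqref{halfWE}, obtain an identity for $-\frac{d}{ds}\hat E[w](s)$ that reduces to the Pohozaev identity \eqref{comid} when $w_s\equiv0$, and then absorb its ``error'' term into $\int(w_s+y\cdot\nabla w)^2\rho$ by exactly the Cauchy--Schwarz/H\"older splitting that produced \eqref{estOmega}. First I would check that all the quantities involved are well defined and that differentiation under the integral sign is legitimate: by Proposition~\ref{propboundw} the functions $w,\nabla w,\nabla^2 w,w_s+y\cdot\nabla w$ and $\nabla(w_s+y\cdot\nabla w)$ are bounded on $\mathbb{R}^n\times[a,b]$, and $|(-\Delta)^{\frac12}\rho|=|n\rho+y\cdot\nabla\rho|\le C\rho$ by \eqref{kernel}; combined with the decay hypotheses \eqref{boundygradw}--\eqref{boundws} this makes every integral in $\hat E[w](s)$ and in \eqref{energyinq} absolutely convergent, uniformly for $s\in[a,b]$. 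The merely polynomial decay of $\rho$ is exactly why the gradient decay is imposed: it is what makes the spherical boundary terms $R\int_{\partial B_R}(\cdots)\rho$ tend to $0$ in the integrations by parts, as in the justifications following \eqref{id1}--\eqref{id2}.

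The heart of the argument is the identity
\begin{eqnarray*}
 -\frac{d}{ds}\hat{E}[w](s) &=& \int(w_s+y\cdot\nabla w)^2\rho\,dy
 +\Big(1-\frac{(p-1)n}{p+1}\Big)\frac{c_n}{4}\iint\frac{(w(y')-w(y))^2}{|y'-y|^{n+1}}\rho(y')\,dy'dy\\
 && -\frac{(p-1)c_n}{4(p+1)}\iint\frac{(w(y')-w(y))^2}{|y'-y|^{n+1}}(y'\cdot\nabla\rho)\,dy'dy\\
 && +\frac{c_n}{2}\iint\frac{(w(y')-w(y))(\rho(y')-\rho(y))}{|y'-y|^{n+1}}(w_s+y\cdot\nabla w)\,dy'dy.
\end{eqnarray*}
To prove it I would differentiate the four terms of $\hat E$ in $s$; for the nonlocal Dirichlet term use the polarized form of \eqref{intid1},
\begin{eqnarray*}
 \int\varphi v(-\Delta)^{\frac12}w+\int\varphi w(-\Delta)^{\frac12}v &=& c_n\iint\frac{(w(y')-w(y))(v(y')-v(y))}{|y'-y|^{n+1}}\varphi(y')\,dy'dy\\
 && +\int wv\,(-\Delta)^{\frac12}\varphi,
\end{eqnarray*}
with $\varphi=\rho$, $v=w_s$, and then substitute $(-\Delta)^{\frac12}w=-w_s-y\cdot\nabla w-\beta w+|w|^{p-1}w$ from \eqref{halfWE}; the contributions carrying $\beta w$ and $|w|^{p-1}w$ cancel against the $s$-derivatives of $\frac{\beta}{2}\int|w|^2\rho$ and $-\frac{1}{p+1}\int|w|^{p+1}\rho$, which reduces $\frac{d}{ds}\hat E$ to a sum of terms each linear in $w_s$. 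To turn this into the perfect square $\int(w_s+y\cdot\nabla w)^2\rho$ plus Dirichlet- and commutator-type terms I would invoke the parabolic counterparts of the three identities \eqref{id1}, \eqref{id2}, \eqref{id3}: test \eqref{halfWE} against $\rho w$, against $(y\cdot\nabla\rho)w$, and against $\rho(y\cdot\nabla w)$, using \eqref{intid1}, \eqref{kernel}, \eqref{kerfradot}, and the commutation rule \eqref{fradot} for $(-\Delta)^{\frac12}(y\cdot\nabla w)$ exactly as in \eqref{leftlambdaE}--\eqref{diflambdaE2}. Combined with the same weights $\frac{n}{p+1},\frac{1}{p+1},1$ as in Proposition~\ref{pohozaev}, the spatial parts reassemble into the right-hand side of \eqref{comid} (with $y\cdot\nabla w$ replaced by $w_s+y\cdot\nabla w$ in the square and in the commutator), while the time-derivative parts $\frac{n}{2(p+1)}\frac{d}{ds}\int|w|^2\rho+\frac{1}{2(p+1)}\frac{d}{ds}\int|w|^2(y\cdot\nabla\rho)$ recombine, via \eqref{kernel}, into $\frac{d}{ds}\big[\frac{1}{2(p+1)}\int|w|^2(-\Delta)^{\frac12}\rho\big]$ --- which is precisely why that term appears in the definition \eqref{quantityhatE} of $\hat E$. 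Putting $w_s\equiv0$ recovers \eqref{comid}, a useful consistency check.

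Finally I would estimate the last (commutator) term of the identity by Cauchy--Schwarz and H\"older, splitting $\mathbb{R}^n=\Omega_y\cup(\mathbb{R}^n\setminus\Omega_y)$ with $\Omega_y=B_{|y|}(0)$ exactly as in \eqref{estOmega}: for every $\epsilon>0$,
\begin{eqnarray*}
 \frac{c_n}{2}\Big|\iint\frac{(w(y')-w(y))(\rho(y')-\rho(y))}{|y'-y|^{n+1}}(w_s+y\cdot\nabla w)\,dy'dy\Big| &\le& \frac{c_nM_n\epsilon}{4}\iint\frac{(w(y')-w(y))^2}{|y'-y|^{n+1}}\rho(y')\,dy'dy\\
 && +\frac{c_n}{4\epsilon}\int(w_s+y\cdot\nabla w)^2\rho\,dy,
\end{eqnarray*}
with $M_n$ as in \eqref{Mn}. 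Inserting this into the identity yields, pointwise in $s$, the differential inequality with coefficient $1-\frac{c_n}{4\epsilon}$ in front of $\int(w_s+y\cdot\nabla w)^2\rho$, coefficient $d_{n,p,\epsilon}=\big(\frac{1-M_n\epsilon}{4}-\frac{(p-1)n}{4(p+1)}\big)c_n$ in front of $\iint\frac{(w(y')-w(y))^2}{|y'-y|^{n+1}}\rho(y')$, and coefficient $-d_{n,p}=-\frac{(p-1)c_n}{4(p+1)}$ in front of $\iint\frac{(w(y')-w(y))^2}{|y'-y|^{n+1}}(y'\cdot\nabla\rho)$; the condition $d_{n,p,\epsilon}\ge0$ is precisely $\epsilon\le\frac{(p+1)-(p-1)n}{(p+1)M_n}$, the condition $1-\frac{c_n}{4\epsilon}\ge0$ is precisely $\epsilon\ge\frac{c_n}{4}$, and this interval is non-empty exactly when $p\le p_*(n)$. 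Integrating over $s\in(a,b)$ then gives \eqref{energyinq}.

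The main obstacle is the identity for $-\frac{d}{ds}\hat E$ in the second step: the bookkeeping of all the symmetrizations and integrations by parts, in particular verifying that every term which is neither of square type nor of Dirichlet type cancels in the combination, and that each such integration by parts is licit under only polynomial decay of the weight --- this is where the gradient-decay hypothesis is genuinely used, and where restricting to the half Laplacian (rather than a general $(-\Delta)^{\alpha}$) is essential, since it is what makes \eqref{kernel}, \eqref{kerfradot} and \eqref{fradot} explicit.
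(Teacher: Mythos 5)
Your proposal is correct and follows essentially the same route as the paper: the same three tested identities combined with weights $\tfrac{n}{p+1},\tfrac{1}{p+1},1$, the same recombination of the $w_s w$ terms via \eqref{kernel} into the $\tfrac{1}{2(p+1)}\int|w|^2(-\Delta)^{\frac12}\rho$ correction in $\hat E$, and the same Cauchy--Schwarz absorption of the commutator term with the $\Omega_y$ splitting defining $M_n$, with all constants and the admissible range of $\epsilon$ matching. The only cosmetic difference is that you differentiate the Dirichlet term by polarization whereas the paper uses the scaling $w_\lambda(y,s)=w(\lambda y,s+\ln\lambda)$ to produce identity \eqref{sid3}; these are equivalent.
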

\begin{proof}
 Similar to the proof in section \ref{sec3}, we first derive two identities.

Multiplying the equation ({\ref{halfWE}}) with $\rho w$ and using integration by parts, we have
\begin{eqnarray}\label{sid1}
  \begin{aligned}
  0=&\ \int w_sw\rho dy
  +\frac{c_n}{2}\iint\frac{(w(y')-w(y))^2}{|y'-y|^{n+1}}\rho(y')dy'dy
  \\&\
  +\beta\int|w|^2\rho dy-\int|w|^{p+1}\rho dy.
\end{aligned}
\end{eqnarray}
Multiplying the equation ({\ref{halfWE}}) with $(y\cdot\nabla\rho)w$
and using integration by parts, we have
\begin{eqnarray}\label{sid2}
 \begin{aligned}
  0=&\
  \int w_sw(y\cdot\nabla\rho)dy+
   \frac{c_n}{2}\iint\frac{(w(y')-w(y))^2}{|y'-y|^{n+1}}(y'\cdot\nabla\rho)dy'dy
\\&\ +
    \frac{n}{2}\int|w|^2\rho dy
   +(\frac{1}{2}+\beta)\int|w|^2(y\cdot\nabla\rho)dy
  \\&\   -\int|w|^{p+1}(y\cdot\nabla\rho)dy.\end{aligned}
\end{eqnarray}
 Thanks to  (\ref{boundw}), (\ref{boundygradw}) and (\ref{boundws}), the procedures of using integration by parts
are easy to justify for any $-\ln T<s<\infty$. We  point out that (\ref{boundygradw}) and (\ref{boundws}) are only used to ensure that  $\int w_sw\rho dy$ and  $\int (y\cdot\nabla w)w\rho dy$ are  well-defined.

In order to continue the proof, we define
  \begin{eqnarray}\label{quantityEs}
   \begin{aligned}
    E[w](s):=&\ \frac{c_n}{4}\iint\frac{(w(y')-w(y))^2}{|y'-y|^{n+1}}\rho(y')dy'dy+
    \frac{\beta}{2}\int|w|^2\rho dy
\\&\ -\frac{1}{p+1}\int|w|^{p+1}\rho dy.\end{aligned}
  \end{eqnarray}
If $w_\lambda(y,s)=w(\lambda y, s+\ln\lambda)$, then $\frac{dw_{\lambda}}{d\lambda}|_{
\lambda=1}=w_s+y\cdot\nabla w$. Similar to (\ref{leftlambdaE}), we have
\begin{eqnarray}
 \begin{aligned}
  \frac{dE[w_\lambda]}{d\lambda}|_{\lambda=1}
  =&\ -\int (w_s+y\cdot\nabla w)^2\rho dy
\\&\
  -\frac{c_n}{2}\iint\frac{(w(y')-w(y))(\rho(y')-\rho(y))}{|y'-y|^{n+1}}(w_s+
  y\cdot\nabla w)dy'dy
 \end{aligned}
\end{eqnarray}
On the other hand, similar to (\ref{diflambdaE2}),
\begin{eqnarray}
 \begin{aligned}
  \frac{dE[w_\lambda]}{d\lambda}|_{\lambda=1}
  =&\
  \frac{dE[w]}{ds}+
  \frac{1-n}{4}c_n\iint\frac{(w(y')-w(y))^2}{|y'-y|^{n+1}}
    \rho(y')dy'dy
    \\&\
    -\frac{c_n}{4}\iint\frac{(w(y')-w(y))^2}{|y'-y|^{n+1}}
    (y'\cdot\nabla\rho)dy'dy
 \\ &\
    -\frac{n\beta}{2}\int|w|^2\rho dy'
    -\frac{\beta}{2}\int|w|^2(y\cdot\nabla\rho) dy
     \\&\
    +\frac{n}{p+1}\int|w|^{p+1}\rho dy
    +\frac{1}{p+1}\int|w|^{p+1}(y\cdot\nabla\rho) dy.
  \end{aligned}
\end{eqnarray}
Therefore,
\begin{eqnarray}\label{sid3}
  \begin{aligned}
  -\frac{dE[w]}{ds}
  =&\
  \frac{1-n}{4}\iint\frac{(w(y')-w(y))^2}{|y'-y|^{n+1}}
    \rho(y')dy'dy
    \\&\
    -\frac{c_n}{4}\iint\frac{(w(y')-w(y))^2}{|y'-y|^{n+1}}
    (y'\cdot\nabla\rho)dy'dy
     \\ &\
    -\frac{n\beta}{2}\int|w|^2\rho dy'
    -\frac{\beta}{2}\int|w|^2(y\cdot\nabla\rho) dy
     \\&\
    +\frac{n}{p+1}\int|w|^{p+1}\rho dy
    +\frac{1}{p+1}\int|w|^{p+1}(y\cdot\nabla\rho) dy
     \\&\
    +\int (w_s+y\cdot\nabla w)^2\rho dy+ II,
 \end{aligned}
\end{eqnarray}
here
\begin{eqnarray}
  II:= \frac{c_n}{2}\iint\frac{(w(y')-w(y))(\rho(y')-\rho(y))}{|y'-y|^{n +1}}(w_s+
  y\cdot\nabla w)dy'dy.
\end{eqnarray}
Combining the identities (\ref{sid1}), (\ref{sid2}) and (\ref{sid3})
in the following way,
\begin{eqnarray*}
  \frac{n}{p+1}\cdot(\ref{sid1})+\frac{1}{p+1}\cdot(\ref{sid2})
  +1\cdot(\ref{sid3}),
\end{eqnarray*}
we have
\begin{eqnarray}\label{comsid}
 \begin{aligned}
  -\frac{dE[w]}{ds}=&\
  \frac{n}{p+1}\int w_sw\rho dy
  +\frac{1}{p+1}\int w_sw(y\cdot\nabla\rho) dy
\\&\
  +(\frac{1}{4}-\frac{(p-1)n}{4(p+1)})c_n
  \iint\frac{(w(y')-w(y))^2}{|y'-y|^{n+1}}\rho(y')dy'dy
 \\&\
  -\frac{(p-1)c_n}{4(p+1)}
  \iint\frac{(w(y')-w(y))^2}{|y'-y|^{n+1}}(y'\cdot\nabla\rho)dy'dy
  \\&\
    +\int (w_s+y\cdot\nabla w)^2\rho dy+ II.
 \end{aligned}
\end{eqnarray}
Estimating $II$ in the same way as to estimate $I$ in (\ref{estOmega}), and
using (\ref{kernel}), we have
\begin{eqnarray}\label{estEs}
 \begin{aligned}
  -\frac{dE[w]}{ds}\geq&\
  \frac{1}{2(p+1)}\frac{d}{ds}\int |w|^2(-\Delta)^{\frac{1}{2}}\rho dy
\\&\
  +(1-\frac{c_n}{4\epsilon})\int(w_s+y\cdot\nabla w)^2\rho dy
 \\&\
   +d_{n,p,\epsilon}
  \iint\frac{(w(y')-w(y))^2}{|y'-y|^{n+1}}\rho(y')dy'dy
   \\&\
  -d_{n,p}
  \iint\frac{(w(y')-w(y))^2}{|y'-y|^{n+1}}(y'\cdot\nabla\rho)dy'dy,
 \end{aligned}
\end{eqnarray}
here $d_{n,p,\epsilon}=(\frac{1-M_n\epsilon}{4}-\frac{(p-1)n}{4(p+1)})c_n$ and
 $d_{n,p}=\frac{(p-1)c_n}{4(p+1)}$.
By the definition of $\hat{E}[w]$ and (\ref{estEs}), we can get  the following monotonicity formula
\begin{eqnarray}\label{monohatE1}
\begin{aligned}
  -\frac{d\hat{E}[w]}{ds}\geq&\
  (1-\frac{c_n}{4\epsilon})\int(w_s+y\cdot\nabla w)^2\rho dy
\\&\
+d_{n,p,\epsilon}
  \iint\frac{(w(y')-w(y))^2}{|y'-y|^{n+1}}\rho(y')dy'dy
   \\&\
  -d_{n,p}
  \iint\frac{(w(y')-w(y))^2}{|y'-y|^{n+1}}(y'\cdot\nabla\rho)dy'dy.\end{aligned}
\end{eqnarray}
Integrating \eqref{monohatE1} over $[a,b]$, for any $-\ln T<a<b<\infty$, yields  \eqref{energyinq}. Since $1<p<p_*(n)$, we have $\frac{c_n}{4}<\frac{(p+1)-(p-1)n}{(p+1)M_n}$.
\end{proof}
Since
\begin{eqnarray}
  (-t)^\beta\lambda^\beta u(\lambda x, \lambda t)=w(\lambda y, s-\ln\lambda),
\end{eqnarray}
we will work with weak limits
\begin{eqnarray}
\begin{aligned}
  w_{\infty}(y,s)=&\ \lim_{s_j\rightarrow\infty}w(y, s+s_j),\\
  w_{-\infty}(y,s)=&\ \lim_{s_j\rightarrow-\infty}w(y, s+s_j'),\end{aligned}
\end{eqnarray}
with $s_j=-\ln\lambda_j, s_j'=-\ln\lambda_j'$. Asserting that $u_0$ and $u_{\infty}$
are self-similar is the same as saying that $w_{\infty}$ and $w_{-\infty}$
are independent of $s$, and this is  the main conclusion of the following proposition.

\begin{proposition}\label{constlimitw}
 Let $w$ be a bounded solution of (\ref{halfWE}) on $\mathbb{R}^{n+1}$ satisfying
   (\ref{boundygradw}) and let $\{s_{j}\}$ be a sequence
  such that
  $$s_{j}\rightarrow\pm\infty,\quad s_{j+1}-s_{j}\rightarrow\pm\infty\quad\text{as }j\rightarrow\infty.$$
  Assume that $w_j(y,s)=w(y,s+s_j)$ converges to a limit $w_{\pm\infty}(y,s)$  uniformly on compact subsets of $\mathbb{R}^{n+1}$.
If $1< p<p_*(n)$ and $n\leq4$,
  then the limit $w_{\pm\infty}$
   is independent of $s$, and equal to
  $0, \beta^\beta$ or $-\beta^\beta$.
  Also, the energy $\hat{E}[w_{\pm\infty}]$  is independent
  of the choice of the sequence $\{s_j\}$.
\end{proposition}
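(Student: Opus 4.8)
The plan is to exploit the monotonicity formula of Proposition~\ref{monotonicity} exactly in the spirit of Giga--Kohn: a monotone bounded quantity must have vanishing derivative in the limit, which forces the ``error'' terms to vanish and pins down the limit. First I would show that $\hat{E}[w](s)$ is bounded independently of $s$ on $(-\ln T,\infty)$. The bound $|w|\le M$ together with the regularity estimates of Proposition~\ref{propboundw} (which control $\nabla w$, $\nabla^2 w$, hence $(-\Delta)^{1/2}w$ pointwise and the Gagliardo-type double integral via the local/nonlocal splitting already used in the proof of Proposition~\ref{pohozaev}) gives a uniform bound on each of the four terms defining $\hat{E}$, since $\rho$ and $(-\Delta)^{1/2}\rho = n\rho + y\cdot\nabla\rho$ are integrable. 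Combined with the fact that $-d\hat{E}/ds \ge 0$ (all three coefficients $1-\frac{c_n}{4\epsilon}$, $d_{n,p,\epsilon}$, $d_{n,p}$ are $\ge 0$ for the admissible choice of $\epsilon$, using $-y'\cdot\nabla\rho \ge 0$), $\hat{E}[w](s)$ is nonincreasing and bounded, hence has a finite limit $\hat{E}_\pm := \lim_{s\to\pm\infty}\hat{E}[w](s)$.

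Next I would integrate \eqref{energyinq} over $[s_j + a, s_j + b]$ for fixed $a<b$. The left-hand side is $\hat{E}[w](s_j+a) - \hat{E}[w](s_j+b) \to \hat{E}_\pm - \hat{E}_\pm = 0$ as $j\to\infty$. Since the integrand on the right is a sum of nonnegative terms, each must tend to zero; in particular
\[
\int_a^b \int (w_s + y\cdot\nabla w)^2 \rho\, dy\, ds \;\longrightarrow\; 0
\quad\text{along } w_j = w(\cdot,\cdot+s_j).
\]
By the uniform convergence of $w_j$ to $w_{\pm\infty}$ on compact sets, together with the uniform gradient and $w_s$ bounds \eqref{boundygradw}, \eqref{boundws} and the derivative estimates of Proposition~\ref{propboundw}, one can pass to the limit and conclude $\int_a^b\int (\partial_s w_{\pm\infty} + y\cdot\nabla w_{\pm\infty})^2\rho\,dy\,ds = 0$. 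Since $\rho>0$, this yields $\partial_s w_{\pm\infty} + y\cdot\nabla w_{\pm\infty} \equiv 0$; because $a<b$ are arbitrary this holds for all $s$, so $w_{\pm\infty}(y,s)$ is constant along the characteristics $y(s) = e^{s}y_0$. Uniform boundedness of $w_{\pm\infty}$ then forces it to be independent of $s$ (let $s\to-\infty$ along a characteristic so $y(s)\to 0$; the value at any $(y_0,s_0)$ equals the value at the origin). Thus $w_{\pm\infty} = w_{\pm\infty}(y)$ is a bounded solution of the stationary equation \eqref{stableW}, and Theorem~\ref{PW} (applicable since $n\le 4$, $1<p<p_*(n)$) gives $w_{\pm\infty}\equiv 0,\ \beta^\beta$ or $-\beta^\beta$.

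Finally, for the statement that $\hat{E}[w_{\pm\infty}]$ is independent of the choice of $\{s_j\}$: evaluating $\hat{E}$ on a constant $c\in\{0,\pm\beta^\beta\}$ gives $\hat{E}[c] = \frac{\beta}{2}c^2\int\rho + \frac{c^2}{2(p+1)}\int(-\Delta)^{1/2}\rho - \frac{|c|^{p+1}}{p+1}\int\rho$; since $\int (-\Delta)^{1/2}\rho\,dy = \int (n\rho + y\cdot\nabla\rho)\,dy = 0$ by integration by parts, this collapses to an explicit number depending only on $c$, hence only on which of the three constants the limit equals. But that constant is determined by $\hat{E}_\pm = \lim_{s\to\pm\infty}\hat{E}[w](s)$, which is intrinsic to $w$ and does not depend on the sequence; and one checks the three admissible values of $\hat{E}[c]$ are distinct (for $1<p$), so the limit constant, and therefore $\hat{E}[w_{\pm\infty}]$, is uniquely determined. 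I expect the main obstacle to be the justification of the limit passage in the integral identity --- i.e., that the Gagliardo double integrals and the weighted $L^2$ norms of $w_j$ converge to those of $w_{\pm\infty}$, not merely that the integrands converge pointwise. This requires a dominated-convergence argument using the uniform bounds from Proposition~\ref{propboundw} and the integrable majorant $\rho(y')/|y'-y|^{n-1}$ near the diagonal and $\rho(y')/|y'-y|^{n+1}$ away from it, exactly as in the a~priori estimates proved in Section~\ref{sec3}.
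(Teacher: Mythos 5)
Your proposal is correct in substance and follows the same Giga--Kohn monotonicity strategy, but it diverges from the paper's proof at two points in a way worth recording. First, to see that the dissipation terms vanish along $s_j$, you use that $\hat{E}[w](s)$ is monotone \emph{and uniformly bounded} (which you justify correctly via the pointwise bounds and the near/far splitting of the Gagliardo integral), hence convergent, so its increments over $[s_j+a,s_j+b]$ tend to zero; the paper instead telescopes $\hat{E}[w_j](a)-\hat{E}[w_{j+1}](a)$ and uses the convergence $\hat{E}[w_j](a)\to\hat{E}[w_\infty](a)$. Both work, and your version postpones the dominated-convergence step. Second, and more interestingly, you extract constancy of $w_{\pm\infty}$ from the vanishing of the transport term $\int\!\!\int(w_s+y\cdot\nabla w)^2\rho$ alone, integrating along the characteristics $y(s)=e^{s-s_0}y_0$ and sending $s\to-\infty$; this gives constancy in both $y$ and $s$ in one stroke (so your appeal to Theorem \ref{PW} is not even needed --- plugging a constant into the equation suffices). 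The paper instead chooses $\epsilon$ at the endpoint so $d_{n,p,\epsilon}=0$ and uses the vanishing of the term weighted by $-y'\cdot\nabla\rho\ge 0$ to get spatial constancy $w_\infty(y,s)=C(s)$ first, then the transport term plus weak lower semicontinuity for $s$-independence. Your route is a legitimate alternative; on the technical point you flag, note that for the $(w_s+y\cdot\nabla w)^2$ term the right tool is weak convergence of $w_{js}+y\cdot\nabla w_j$ (from the equation) plus lower semicontinuity of the weighted $L^2$ norm, as the paper does, rather than dominated convergence, since time-equicontinuity of $w_{js}$ is not directly available. Your argument for sequence-independence of the energy, namely $\hat{E}[w_{\pm\infty}]=\lim_{s\to\pm\infty}\hat{E}[w](s)$, is cleaner than the paper's contradiction argument and is correct once the convergence $\hat{E}[w_j](a)\to\hat{E}[w_{\pm\infty}]$ is justified.

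One genuine (though localized) error: your parenthetical claim that ``the three admissible values of $\hat{E}[c]$ are distinct'' is false. A direct computation gives $\hat{E}[0]=0$ and
\begin{equation*}
\hat{E}[\beta^\beta]=\hat{E}[-\beta^\beta]=\frac{\beta^{2\beta}}{2(p+1)}\int\rho\,dy>0 ,
\end{equation*}
so only two values occur. This does not damage the statement you are proving (the energy is still determined by the sequence-independent limit $\hat{E}_\pm$), but it cannot be used to conclude that the limit \emph{constant} is independent of the sequence; that stronger statement is exactly what Proposition \ref{limitsolw} must handle by a separate continuity/connectedness argument, precisely because $\beta^\beta$ and $-\beta^\beta$ have equal energy.
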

\begin{proof}
We shall discuss only the case $s_j\rightarrow+\infty$;
  the proof for $s_j\rightarrow -\infty$ would be derived in the same way.

 Selecting $w=w_j,  b=a+s_{j+1}-s_j$ in Proposition \ref{monotonicity}
  for any real number $a$,
  we obtain
  \begin{eqnarray}
   \begin{aligned}
  \delta_j\hat{E}[w](a):=&\
    \hat{E}[w_j](a)-\hat{E}[w_{j+1}](a)
    \\=&\
    \hat{E}[w_j](a)-\hat{E}[w_j](a+s_{j+1}-s_j)
     \\ \geq&\
    \big(1-\frac{c_n}{4\epsilon}\big)
    \int_a^{b}\int(w_{js}+y\cdot\nabla w_j)^2\rho dyds
    \\&\
   +d_{n,p,\epsilon}
  \int_a^{b}\iint\frac{(w_j(y')-w_j(y))^2}{|y'-y|^{n+1}}\rho(y')dy'dyds
   \\&\
  -d_{n,p}\int_a^{b}
 \iint\frac{(w_j(y')-w_j(y))^2}{|y'-y|^{n+1}}(y'\cdot\nabla\rho)dy'dyds
  \end{aligned}
  \end{eqnarray}
  with $d_{n,p,\epsilon}=(\frac{1-M_n\epsilon}{4}-\frac{(p-1)n}{4(p+1)})c_n$ and
 $d_{n,p}=\frac{(p-1)c_n}{4(p+1)}$. Now we choose $\epsilon=\frac{p+1-(p-1)n}{(p+1)M_n}$,
 then $d_{n,p,\epsilon}=0$ and $1-\frac{c_n}{4\epsilon}>0$ when $1<p<p_*(n)$.

 Thanks to the bounds of $w$ and $\nabla w$, we know that $w_j$ and $\nabla w_j$
 are bounded independently of $j$. Therefore,
  \begin{eqnarray}\label{Econvergence}
    \begin{aligned}
   & \iint\frac{(w_j(y')-w_j(y))^2}{|y'-y|^{n+1}}\rho(y')dy'dy
   \\&\ =\lim_{j\rightarrow\infty}
    \iint\frac{(w_{\infty}(y')-w_{\infty}(y))^2}{|y'-y|^{n+1}}\rho(y')dy'dy.
    \end{aligned}
  \end{eqnarray}
   Arguing similarly for the other terms we see that
  $\hat{E}[w_j](a)\rightarrow \hat{E}[w_\infty](a)$ as $j\rightarrow\infty$. In particular, we know that
  $\delta_j\hat{E}[w](a)\rightarrow0$.
  Since $s_{j+1}-s_j\rightarrow\infty$, it follows that
  \begin{eqnarray}\label{limitwintegral}
    \begin{aligned}
   0\geq&\
    \lim_{j\rightarrow\infty}\{
    \big(1-\frac{c_n}{4\epsilon}\big)
    \int_a^{a'}\int(w_{js}+y\cdot\nabla w_j)^2\rho dyds
  \\&\
  -d_{n,p}\int_a^{a'}
 \iint\frac{(w_j(y')-w_j(y))^2}{|y'-y|^{n+1}}(y'\cdot\nabla\rho)dy'dyds\},
  \end{aligned}
  \end{eqnarray}
  for any real number $a<a'$. (\ref{Econvergence}) and (\ref{limitwintegral}) imply that
  \begin{eqnarray}
    \int_a^{a'}
    \iint\frac{(w_{\infty}(y')-w_{\infty}(y))^2}{|y'-y|^{n+1}}(y'
    \cdot\nabla\rho)dy'dyds
    =0,
  \end{eqnarray}
  for any real number $a<a'$. Thus,
  \begin{eqnarray}\label{limitw}
    w_{\infty}(y,s)\equiv C(s), \ \forall y\in\mathbb{R}^n.
  \end{eqnarray}
  Using the dominated convergence theorem and using integration by parts, we conclude that
  $w_{\infty}=C(s)$ is a weak solution of
  \begin{eqnarray}\label{weakwinf}
    w_{ s}+y\cdot\nabla w+(-\Delta)^{\frac{1}{2}}w+\beta w-|w|^{p-1}w=0.
  \end{eqnarray}
  By (\ref{limitwintegral}), we have
  \begin{eqnarray}\label{limitws}
    \lim_{j\rightarrow\infty}\int_a^{a'}\int(w_{js}+y\cdot\nabla  w_j)^2\rho dyds=
    0
  \end{eqnarray}
  for any real number $a<a'$.
  Now, we have that $|w_{js}+y\cdot\nabla  w_j|\leq C$ with $C$ independent of $j$
  from (\ref{boundw2}),
  and $w_{js}+y\cdot\nabla  w_j$ converges weakly to $w_{\infty s}$ by the equation (\ref{weakwinf}).
  Then the integral (\ref{limitws}) is lower-semicontinuous and we obtain
  \begin{eqnarray}
    \int\int_a^{a'}|w_{\infty s}|^2\rho dsdy=0.
  \end{eqnarray}
  Since $a$ and $a'$ are arbitrary, this means that $w_{\infty}$ is independent of $s$;
  thus we can conclude that $w_{\infty}$ equals to $0, \beta^\beta$ or $-\beta^\beta$
  by means of equation (\ref{weakwinf}).

  It remains to prove that $\hat{E}[w_{\infty}]$ is independent of the choice of sequence.
  If it is not true, then there is another sequence $\{\widetilde{s}_j\}$
  satisfying the hypotheses
  of the proposition for which $\hat{E}[w_\infty]\neq \hat{E}[\widetilde{w}_\infty]$,
  where $\widetilde{w}_\infty=\lim\limits_{j\rightarrow\infty}\widetilde{w}_j$ with
  $\widetilde{w}_j(y,s)=w(y,s+\widetilde{s}_j)$. Relabeling and passing to a subsequence
  if necessary, we may suppose that $\hat{E}[w_\infty]<\hat{E}[\widetilde{w}_\infty]$ and
  $s_j<\widetilde{s}_j$.

  Selecting $a=s_j, b=\widetilde{s}_j$ in Proposition \ref{monotonicity},
  we obtain
  \begin{eqnarray}\label{hatEinq}
   \begin{aligned}
    \hat{E}[w_j](0)- \hat{E}[\tilde{w}_j](0)=&\
    \hat{E}[w](s_j)- \hat{E}[w](\widetilde{s}_j)
    \\\geq&\
    \big(1-\frac{c_n}{4\epsilon}\big)\int_{s_j}^{\widetilde{s}_j}
    \int(w_{s}+y\cdot\nabla w)^2\rho dyds.
    \end{aligned}
  \end{eqnarray}
  Since $\hat{E}[w_j](0)- \hat{E}[\tilde{w}_j](0)\rightarrow
  \hat{E}[w_\infty]-\hat{E}[\widetilde{w}_\infty]<0$, the left hand side of (\ref{hatEinq})
  is negative for sufficiently large $j$. This is a contradiction, because of the right
  side is non-negative. Hence $\hat{E}[w_\infty]= \hat{E}[\widetilde{w}_\infty]$, and
  the proof is complete.
\end{proof}

\begin{proposition}\label{limitsolw}

  Let $w$ be a bounded solution of (\ref{halfWE}) in $\mathbb{R}^{n+1}$
  satisying (\ref{boundygradw}). If
 $1< p<p_*(n)$ and $n\leq4$,
  then $\lim\limits_{s\rightarrow\pm\infty}w(y,s)$ exists and equals to
  $0, \beta^\beta$ or $-\beta^\beta$.
  The convergence is uniform on every compact subset of $\mathbb{R}^n$.
\end{proposition}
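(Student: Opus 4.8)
The plan is to upgrade the subsequential statement of Proposition \ref{constlimitw} to an honest limit by analysing the $\omega$-limit set of the trajectory $s\mapsto w(\cdot,s)$. First I would record compactness of this trajectory. By the global estimates of Proposition \ref{propboundw}, $\|w\|_{C^3(\mathbb{R}^n)}$ and $\|\nabla w\|_{L^\infty(\mathbb{R}^{n+1})}$ are finite, so $\{w(\cdot,s):s\in\mathbb{R}\}$ is precompact in $C^0_{\mathrm{loc}}(\mathbb{R}^n)$. Moreover, from the equation, $w_s=-(-\Delta)^{1/2}w-y\cdot\nabla w-\beta w+|w|^{p-1}w$, and here $(-\Delta)^{1/2}w$ is globally bounded by (\ref{boundhalfw}) while $|y\cdot\nabla w|\le|y|\,\|\nabla w\|_{L^\infty}$, so $w_s$ is bounded on $K\times\mathbb{R}$ for every compact $K\subset\mathbb{R}^n$. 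In particular $s\mapsto w(\cdot,s)$ is locally Lipschitz, hence continuous, into $C^0_{\mathrm{loc}}(\mathbb{R}^n)$, and the $\omega$-limit set $\omega(w):=\bigcap_{T\in\mathbb{R}}\overline{\{w(\cdot,s):s\ge T\}}$ is nonempty, compact and connected by the standard argument for precompact continuous trajectories.

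Next I would show $\omega(w)\subseteq\{0,\beta^\beta,-\beta^\beta\}$. Given $v\in\omega(w)$, pick $s_j\to\infty$ with $w(\cdot,s_j)\to v$ in $C^0_{\mathrm{loc}}$; passing to a subsequence we may assume $s_{j+1}-s_j\to\infty$. The translates $w_j(y,s):=w(y,s+s_j)$ enjoy, uniformly in $j$, the $C^3$ bounds in $y$ and the Lipschitz bound in $s$ from the previous step, so by the Arzel\`{a}--Ascoli theorem a further subsequence converges, uniformly on compact subsets of $\mathbb{R}^{n+1}$, to a limit $w_\infty$ with $w_\infty(\cdot,0)=v$. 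This sequence meets the hypotheses of Proposition \ref{constlimitw}, which then forces $w_\infty$ to be independent of $s$ and equal to one of $0,\beta^\beta,-\beta^\beta$; hence so is $v$. Since distinct constants are at positive distance in $C^0_{\mathrm{loc}}$, the three-point set $\{0,\beta^\beta,-\beta^\beta\}$ is discrete, so the connected set $\omega(w)$ is a singleton $\{v_\infty\}$; precompactness of the trajectory then gives $w(\cdot,s)\to v_\infty$ in $C^0_{\mathrm{loc}}(\mathbb{R}^n)$ --- that is, uniformly on compact sets --- as $s\to\infty$. The case $s\to-\infty$ is identical, using the $\alpha$-limit set and the $s_j\to-\infty$ version of Proposition \ref{constlimitw}.

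A variant replaces the connectedness input by the monotonicity of the energy. Proposition \ref{monotonicity} makes $s\mapsto\hat E[w](s)$ nonincreasing (choose $\epsilon$ with $1-\frac{c_n}{4\epsilon}\ge0$ and $d_{n,p,\epsilon}\ge0$, and use $y'\cdot\nabla\rho\le0$), and it is bounded since $w$ is; hence $\hat E[w](s)$ tends to a limit $\hat E_\infty$ as $s\to\infty$, and every subsequential limit $v$ satisfies $\hat E[v]=\hat E_\infty$. As $\hat E[0]=0\ne\hat E[\pm\beta^\beta]$ while $\hat E[\beta^\beta]=\hat E[-\beta^\beta]$, the only conceivable oscillation is between $+\beta^\beta$ and $-\beta^\beta$, and this is excluded by the intermediate value theorem applied to $s\mapsto w(0,s)$: a crossing $w(0,\sigma_j)=0$ along some $\sigma_j\to\infty$ would yield, as above, a subsequential limit $\equiv0$ of energy $0\ne\hat E_\infty$. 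I expect the main difficulty to lie in the middle step --- securing, from convergence of $w(\cdot,s_j)$ on a single time slice, a subsequence along which both $s_{j+1}-s_j\to\infty$ and the \emph{joint} local uniform convergence of the time-translates hold, so that Proposition \ref{constlimitw} genuinely applies; once that is arranged, passing from ``all subsequential limits lie in a fixed three-point set'' to an actual limit is routine.
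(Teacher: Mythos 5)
Your proposal is correct. Your primary argument takes a genuinely different route from the paper's, while your ``variant'' is essentially the paper's own proof. The paper proceeds in two stages: first, any sequence $s_j\to\infty$ admits a subsequence (with gaps tending to infinity) along which the space--time translates converge to one of $0,\pm\beta^\beta$ by Proposition \ref{constlimitw}; second, to rule out dependence on the sequence, it uses that $\hat E$ takes the same value on all such limits, that $\hat E[0]=0<\hat E[\pm\beta^\beta]$ (so $0$ cannot coexist with $\pm\beta^\beta$ among subsequential limits; this is \eqref{inq1}), and finally the intermediate value theorem applied to $s\mapsto w(0,s)$ to exclude oscillation between $+\beta^\beta$ and $-\beta^\beta$, since a zero crossing would produce a subsequential limit $0$ with the wrong energy. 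Your first argument reproduces the first stage verbatim but replaces the entire second stage by the topological observation that the $\omega$-limit set of a continuous trajectory with precompact range in the metrizable space $C^0_{\mathrm{loc}}(\mathbb{R}^n)$ is connected, hence, being contained in the discrete set $\{0,\beta^\beta,-\beta^\beta\}$, is a singleton. This is sound: the global $C^1$ bound of Proposition \ref{propboundw} gives precompactness of the time slices, and the local-in-space bound on $w_s$ (via \eqref{boundhalfw} and the global gradient bound, or directly from \eqref{boundws}) gives continuity of the trajectory. What the connectedness route buys is that it needs neither the energy comparison \eqref{inq1} nor the sequence-independence clause of Proposition \ref{constlimitw}; what the paper's route buys is that the same energy bookkeeping is reused immediately afterwards (e.g.\ in Theorem \ref{thm4.4}). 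The step you flag as the main difficulty --- upgrading convergence on a single time slice to joint locally uniform convergence of the translates so that Proposition \ref{constlimitw} genuinely applies --- is handled correctly by Arzel\`{a}--Ascoli with the uniform spatial bounds and the local temporal Lipschitz bound, and is exactly how the paper arranges it as well.
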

\begin{proof}
  Let $\{s_j\}$ be a sequence such that $\lim_{j\rightarrow\infty}s_{j}=\infty$. Since $|\nabla w|$ and $|w_s|$
  are bounded on compact sets by Proposition \ref{propboundw}, there is a subsequence of $\{w(y,s+s_j)\}$
  which converges uniformly on
  compact sets to some function $w_\infty(y,s)$.
  By taking another subsequence if
  necessary, we may assume that $s_{j+1}-s_j\rightarrow\infty$. Therefore, Proposition
  \ref{constlimitw} tells us that $w_\infty$ equals to
  $0$ or $\pm\beta^\beta$.

  Now we prove that the limit is independent of the choice of sequence.
  Suppose that $\{s_j\}$ and $\{\widetilde{s}_j\}$ both tend to infinity and
  satisfy the hypotheses of Proposition \ref{constlimitw}, with
  \begin{eqnarray}
    w_j(y,s)=w(y,s+s_j)\rightarrow w_\infty
    \ \  \text{and}\  \
    \widetilde{w}_j(y,s)=w(y,s+\widetilde{s}_j)\rightarrow\widetilde{w}_\infty.
    \nonumber
  \end{eqnarray}
  It is easy to see that
  \begin{eqnarray}\label{inq1}
    \hat{E}[\pm\beta^\beta]=\frac{\beta^{2\beta}}{2(p+1)}\int\rho dy>0=
    \hat{E}[0].
  \end{eqnarray}
  In order to get \eqref{inq1}, we have used the fact that
  \begin{eqnarray}
   \int(-\Delta)^{\frac{1}{2}}\rho dy=0.
  \end{eqnarray}
  Since $\hat{E}[w_\infty]=\hat{E}[\widetilde{w}_\infty]$ by Proposition \ref{constlimitw}
 , we see that $w_\infty, \widetilde{w}_\infty$ are either both $0$ or both
  $\pm\beta^\beta$. If $w_\infty=\beta^\beta, \widetilde{w}_\infty=-\beta^\beta$
  or vice versa, then there must be a sequence  $s'_j\rightarrow\infty$ with
  $w(0,s'_j)=0$, by the continuity of $w$. Taking subsequences as before (and
  denoting the result again by $s'_j$) we can see that $w'_j=w(y,s+s'_j)
  \rightarrow w'_\infty =0$. This contradicts the fact that $\hat{E}[w_\infty]
  =\hat{E}[w'_\infty]$, and it follows that $w_\infty =\widetilde{w}_\infty$.

   By a parallel way, we can also prove results for the case $s\rightarrow-\infty$.
\end{proof}

\begin{proof}[Proof of Theorem \ref{classifytypeI} ]
 Proposition \ref{limitsolw} implies  Theorem \ref{classifytypeI} as we only consider the
case $s\to\infty$.
\end{proof}
Furthermore, we have

\begin{theorem}\label{thm4.4}
   Let  $w$ be a bounded, global solution of (\ref{halfWE}) in
  $\mathbb{R}^{n+1}$ satisfying (\ref{boundygradw}), with
  \begin{eqnarray}\label{positive}
    \limsup_{s\rightarrow\infty}|w(0,s)|>0.
  \end{eqnarray}
  If $1<p< p_*(n)$ and $n\leq4$, then
  $w\equiv\pm\beta^\beta$.
\end{theorem}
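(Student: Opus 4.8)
The plan is to combine the convergence at $s=\pm\infty$ from Proposition \ref{limitsolw} with the monotonicity formula of Proposition \ref{monotonicity} to force rigidity. First I would apply Proposition \ref{limitsolw}: since $w$ is a bounded global solution of (\ref{halfWE}) satisfying (\ref{boundygradw}) with $1<p<p_*(n)$ and $n\le4$, both limits $\ell_\pm:=\lim_{s\to\pm\infty}w(\cdot,s)$ exist, are independent of $y$, belong to $\{0,\beta^\beta,-\beta^\beta\}$, and the convergence is uniform on compact subsets of $\R^n$. The hypothesis (\ref{positive}) forces $\ell_+\ne0$; replacing $w$ by $-w$ if needed, we may assume $\ell_+=\beta^\beta$.

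Next I would show that $\hat{E}[w](s)$ is constant in $s$. By Proposition \ref{monotonicity} (on arbitrary intervals $[a,b]\subset\R$), $\hat{E}[w]$ is nonincreasing, and since $w$ is bounded one checks directly from the definition \eqref{quantityhatE} that $\hat{E}[w](s)$ stays bounded; hence the monotone limits $\lim_{s\to\pm\infty}\hat{E}[w](s)$ exist and are finite. Using the uniform-on-compacts convergence of $w(\cdot,s)$ to $\ell_\pm$ together with the uniform $L^\infty$ bounds on $w$ and $\nabla w$ from Proposition \ref{propboundw} to produce an integrable majorant for the Gagliardo-type integrand — exactly the estimate behind \eqref{Econvergence} — dominated convergence gives $\hat{E}[w](s)\to\hat{E}[\ell_\pm]$ as $s\to\pm\infty$. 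By \eqref{inq1}, $\hat{E}[0]=0<\hat{E}[\pm\beta^\beta]=\hat{E}[\ell_+]$. Since $\hat{E}[w]$ is nonincreasing, $\hat{E}[\ell_-]\ge\hat{E}[\ell_+]>0$, which rules out $\ell_-=0$ and forces $\hat{E}[\ell_-]=\hat{E}[\ell_+]$. A nonincreasing function with equal limits at $\pm\infty$ is constant, so $\hat{E}[w](s)\equiv\hat{E}[\beta^\beta]$.

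With $\hat{E}[w]$ constant, the left-hand side of \eqref{energyinq} vanishes for all $a<b$. I would choose $\epsilon=\frac{(p+1)-(p-1)n}{(p+1)M_n}$, which is admissible since $\frac{c_n}{4}<\epsilon$ for $1<p<p_*(n)$; then $d_{n,p,\epsilon}=0$, $d_{n,p}=\frac{(p-1)c_n}{4(p+1)}>0$ and $1-\frac{c_n}{4\epsilon}>0$. Every term on the right of \eqref{energyinq} is then nonnegative (recall $y'\cdot\nabla\rho\le0$), so each of them vanishes; in particular
\[
\int_a^b\int(w_s+y\cdot\nabla w)^2\rho\,dy\,ds=0
\qquad\text{and}\qquad
\int_a^b\iint\frac{(w(y')-w(y))^2}{|y'-y|^{n+1}}\,|y'\cdot\nabla\rho|\,dy'\,dy\,ds=0 .
\]
Since $y'\cdot\nabla\rho=-(n+1)|y'|^2\rho/(1+|y'|^2)<0$ for $y'\ne0$, the second identity forces $w(y',s)=w(y,s)$ for a.e.\ $(y,y',s)$, hence $w$ is independent of $y$ by continuity; the first identity (together with $y\cdot\nabla w=0$) then gives $w_s\equiv0$, using that $w_s+y\cdot\nabla w$ is continuous by Proposition \ref{propboundw}. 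Thus $w$ is a constant, and \eqref{halfWE} reduces to $\beta w=|w|^{p-1}w$, so $w\in\{0,\pm\beta^\beta\}$; by \eqref{positive}, $w\not\equiv0$, whence $w\equiv\pm\beta^\beta$.

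The step I expect to be the main obstacle is the energy-convergence claim $\hat{E}[w](s)\to\hat{E}[\ell_\pm]$: interchanging the limit $s\to\pm\infty$ with the singular double integral on an unbounded domain that defines $\hat{E}$. This requires a uniform integrable majorant for $\frac{(w(y')-w(y))^2}{|y'-y|^{n+1}}\rho(y')$ built from the $L^\infty$ bounds on $w$ and $\nabla w$ (by splitting into the regions $|y'-y|\le1$ and $|y'-y|>1$), which is precisely the computation already carried out around \eqref{Econvergence} in the proof of Proposition \ref{constlimitw}; the remaining steps are routine manipulations of the monotonicity inequality and the equation.
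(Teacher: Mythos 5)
Your proposal is correct and follows essentially the same route as the paper: identify $\ell_\pm\in\{0,\pm\beta^\beta\}$ via Proposition \ref{limitsolw}, pass to the limit in the monotonicity formula with $\epsilon=\frac{(p+1)-(p-1)n}{(p+1)M_n}$ (so $d_{n,p,\epsilon}=0$), use \eqref{inq1} to exclude $\ell_-=0$, and then conclude from the vanishing of the nonnegative right-hand side that $w$ is constant in $y$ and $s$. The only cosmetic difference is that you first establish constancy of $\hat E[w](s)$ before killing the dissipation terms, whereas the paper reads both facts off the single limiting inequality \eqref{inq}; the content is identical.
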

\begin{proof}
  Replacing $w$ by $-w$ if necessary, we may assume that
  \begin{eqnarray}
    \limsup_{s\rightarrow\infty}w(0,s)>0.
  \end{eqnarray}
  Let $\epsilon=\frac{p+1-(p-1)n}{(p+1)M_n}$ in (\ref{energyinq}).
  For any $\tau>0$,
  \begin{eqnarray}
   \begin{aligned}
    \hat{E}[w](-\tau)- \hat{E}[w](\tau)
    \geq&\
    \big(1-\frac{c_n}{4\epsilon}\big)\int_{-\tau}^{\tau}
    \int(w_{s}+y\cdot\nabla  w)^2\rho dyds
     \\&\
    -d_{n,p}\int_{-\tau}^{\tau}\iint\frac{(w(y')-w(y))^2}{|y'-y|^{n+1}}(y'\cdot\nabla\rho)dy'dyds.
  \end{aligned}
  \end{eqnarray}
  Applying Proposition \ref{limitsolw} and passing to the limit,
  \begin{eqnarray}\label{inq}
    \begin{aligned}
    \hat{E}[w_{-\infty}]-\hat{E}[w_\infty]
    \geq&\
    \big(1-\frac{c_n}{4\epsilon}\big)\int_{-\infty}^{\infty}
    \int(w_{s}+y\cdot\nabla  w)^2\rho dyds
     \\&\
  -d_{n,p}\int_{-\infty}^{\infty}\iint\frac{(w(y')-w(y))^2}{|y'-y|^{n+1}}(y'\cdot\nabla\rho)dy'dyds,
  \end{aligned}
  \end{eqnarray}
  with $w_{\infty}=\lim\limits_{s\rightarrow\infty}w(y,s)$ (respectively $w_{-\infty}$), and the hypothesis on $w$ assures that $w_\infty=\beta^\beta$. If $w_{-\infty}=0$, then the right side
  of (\ref{inq}) would be negative by (\ref{inq1}), which cannot happen. Therefore, $w_{-\infty}=\pm\beta^\beta$, and then (\ref{inq}) implies that $w\equiv C$.
   It follows that
  $w=w_\infty=\beta^\beta$, and we complete the proof.
\end{proof}
\begin{proof}[Proof of Theorem \ref{halfliouville}]
Theorem \ref{halfliouville} is just an equivalent statement of Theorem \ref{thm4.4}  for $u$.
\end{proof}

\section{Proof of  Theorem \ref{classifyall}: Growth rate estimate for $1<p<p_*(n)$}
Let $T<\infty$ and let $u:\mathbb{R}^n\times(-T,0)\rightarrow \mathbb{R}$ be
a  classical solution of the semi-linear half heat equation
\begin{eqnarray}\label{halfHE1}
  u_t+(-\Delta)^{\frac{1}{2}}u-|u|^{p-1}u=0
\end{eqnarray}
such that, for any $-T<t'<0$,
\begin{eqnarray}\label{bounduwitht}
   \begin{aligned}
   &u,\nabla u, \nabla^2 u, \nabla^3u, \ \text{and}\ u_t\
   \text{are bounded and continuous}\\
   &\text{on}\ \mathbb{R}^n\times[-T,t'].
  \end{aligned}
\end{eqnarray}
We recall that a solution $u$ blows up at $t=0$ if
\begin{eqnarray}\label{ublowat0}
  \sup_{x\in\mathbb{R}^n}|u(x,t)|\rightarrow\infty, \quad
  \text{as}\ t\rightarrow 0.
\end{eqnarray}
In this section, we want to obtain the following blow up rate estimate:
\begin{eqnarray}\label{blowrate1}
  \sup_{\R^n\times(-T,0)}\{|u|^{p-1}+|\nabla u|^{\frac{p-1}{p}}+|\nabla^{2}u|^{\frac{p-1}{2p-1}}\}(-t)<\infty.
\end{eqnarray}
As before, we also need a further assumption: fix $\delta>0$, for any $-T<t'<0$,
\begin{eqnarray}\label{boundxgraduwitht}
  |\nabla u|(1+|x|^{\delta}) \ \text{is bounded on}\
  \mathbb{R}^n\times[-T,t'].
\end{eqnarray}

Assume that estimate \eqref{blowrate1} fails. Since $T<\infty$, there is an increasing sequence of times $t_{k}\rightarrow 0$ such that
\begin{eqnarray}\label{tkseq}
  \begin{aligned}
 &\sup\limits_{\mathbb{R}^n\times (-T, t_{k})}\{|u|^{p-1}+|\nabla u|^{\frac{p-1}{p}}+|\nabla^{2}u|^{\frac{p-1}{2p-1}}\}(-t)\\
 &= \sup\limits_{\mathbb{R}^n}\{|u|^{p-1}(x, t_{k})+|\nabla u(x, t_{k})|^{\frac{p-1}{p}}+|\nabla^{2}u(x, t_{k})|^{\frac{p-1}{2p-1}}\}(-t_{k})=M_{k}
 \end{aligned}
\end{eqnarray}
with $t_k\rightarrow 0$ and $M_k\rightarrow \infty$ as $k\rightarrow \infty$. We may choose a sequence $x_k\in\mathbb{R}^n$ such that
\begin{eqnarray}\label{xkseq}
 \frac{1}{2}M_{k}\leq \{|u|^{p-1}(x_{k}, t_{k})+|\nabla u(x_{k}, t_{k})|^{\frac{p-1}{p}}+|\nabla^{2}u(x_{k}, t_{k})|^{\frac{p-1}{2p-1}}\}(-t_k)\leq M_{k}.
\end{eqnarray}
In order to study $u$ near $(x_k,t_k)$, we use the \emph{similarity variables} defined by
\begin{eqnarray}\label{wkseq}
  w_k(y,s)=(-t)^{\beta}u(x,t),
\end{eqnarray}
\begin{eqnarray}\label{simvariable}
  x-x_k=(-t)y,\ t=-e^{-s}.
\end{eqnarray}
It is easy to verify that the rescaled function $w_k$ satisfies
 \begin{eqnarray}\label{halfWkE}
   w_{ks}+(-\Delta)^{\frac{1}{2}}w_k+y\cdot\nabla w_k+\beta w_k-|w_k|^{p-1}w_k
   =0,
 \end{eqnarray}
 and  inherits bounds from those on $u$: for any $-\ln T<s'<\infty$,
\begin{eqnarray}\label{boundwkwiths}
   \begin{aligned}
   &w_k,\nabla w_k, \nabla^2 w_k, \nabla^3w_k,
   w_{ks}+y\cdot\nabla  w_k\ \text{and}\ \nabla(w_{ks}+y\cdot\nabla  w_k)\
   \text{are }\\
   &\text{bounded and continuous on}\ \mathbb{R}^n\times[-\ln T,s'].
  \end{aligned}
\end{eqnarray}
 By (\ref{boundxgraduwitht}), we know that $w_{k}$ also satisfies: for any $-\ln T<s'<\infty$,
\begin{eqnarray}\label{boundygradwkwiths}
   \begin{aligned}
    |\nabla  w_k(y,s)|(1+|y|^{\delta})<\infty,\ \
  \text{on}\ \mathbb{R}^n\times[-\ln T,s'].
  \end{aligned}
\end{eqnarray}
\begin{remark}
In the following computations, the integration by parts can be justified if we assume (\ref{boundwkwiths}) and (\ref{boundygradwkwiths}).
\end{remark}
\begin{proposition}\label{propEhatE}
  For any $k$, the rescaled solution $w_k$ satisfies
  \begin{eqnarray}\label{energyidwk}
   \begin{aligned}
    \frac{1}{2}\frac{d}{ds}
    \int |w_k|^2\rho dy=&\ \frac{1}{p+1}\int|w_k|^2(-\Delta)^{\frac{1}
    {2}}\rho dy
    \\&\
    -2\hat{E}[w_k](s)+\frac{p-1}{p+1}\int|w_k|^{p+1}
    \rho dy
    \end{aligned}
  \end{eqnarray}
  and
    \begin{eqnarray}\label{monohatEk}
     \begin{aligned}
    -\frac{d\hat{E}[w_k]}{ds}\geq&\
      \big(1-\frac{c_n}{4\epsilon}\big)\int(w_{ks}+y\cdot\nabla  w_k)^2\rho dy
     \\&\
+d_{n,p,\epsilon}
  \iint\frac{(w_k(y')-w_k(y))^2}{|y'-y|^{n+1}}\rho(y')dy'dy
   \\&\
  -d_{n,p}\iint\frac{(w_k(y')-w_k(y))^2}{|y'-y|^{n+1}}(y'\cdot\nabla\rho)dy'dy,
    \end{aligned}
  \end{eqnarray}
where $\epsilon\in(\frac{c_n}{4}, \frac{(p+1)-(p-1)n}{(p+1)M_n})$, $d_{n,p,\epsilon}=(\frac{1-M_n\epsilon}{4}-\frac{(p-1)n}{4(p+1)})c_n>0$ and
  $d_{n,p}=\frac{(p-1)c_n}{4(p+1)}$. $\hat{E}[w](s)$
  is defined by   (\ref{quantityhatE}).
\end{proposition}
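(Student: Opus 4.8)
The plan is to obtain both statements essentially for free from material already established: the energy identity \eqref{energyidwk} will be a purely algebraic rearrangement of the first identity \eqref{sid1} together with the definition \eqref{quantityhatE} of $\hat E$, and the monotonicity inequality \eqref{monohatEk} will be the pointwise differential inequality \eqref{monohatE1} from Proposition \ref{monotonicity}, applied with $w$ replaced by $w_k$.

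First I would prove \eqref{energyidwk}. Since $w_k$ solves \eqref{halfWkE}, which has exactly the form of \eqref{halfWE}, multiplying by $\rho w_k$ and integrating by parts over $\mathbb{R}^n$ yields identity \eqref{sid1} with $w_k$ in place of $w$; this integration by parts is legitimate by the bounds \eqref{boundwkwiths}, the spatial decay \eqref{boundygradwkwiths}, and the polynomial decay of $\rho$, exactly as in Section \ref{sec3} and the section on the monotonicity formula. Since $\tfrac12\tfrac{d}{ds}\int|w_k|^2\rho\,dy=\int w_{ks}w_k\rho\,dy$, the identity \eqref{sid1} then expresses $\tfrac12\tfrac{d}{ds}\int|w_k|^2\rho\,dy$ as $-\tfrac{c_n}{2}$ times the weighted double integral $\iint\frac{(w_k(y')-w_k(y))^2}{|y'-y|^{n+1}}\rho(y')\,dy'dy$, minus $\beta\int|w_k|^2\rho\,dy$, plus $\int|w_k|^{p+1}\rho\,dy$. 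On the other hand, the definition \eqref{quantityhatE} gives $-2\hat E[w_k]$ as $-\tfrac{c_n}{2}$ times the same double integral, minus $\beta\int|w_k|^2\rho\,dy$, minus $\tfrac{1}{p+1}\int|w_k|^2(-\Delta)^{\frac12}\rho\,dy$, plus $\tfrac{2}{p+1}\int|w_k|^{p+1}\rho\,dy$. Adding $\tfrac{1}{p+1}\int|w_k|^2(-\Delta)^{\frac12}\rho\,dy$ cancels the fractional-Laplacian term, and adding $\tfrac{p-1}{p+1}\int|w_k|^{p+1}\rho\,dy$ raises the coefficient of $\int|w_k|^{p+1}\rho\,dy$ from $\tfrac{2}{p+1}$ to $1$; the result coincides term by term with the right-hand side produced by \eqref{sid1}, which is \eqref{energyidwk}.

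Next I would prove \eqref{monohatEk}. For each fixed $s$ the function $w_k(\cdot,s)$ is a bounded solution of \eqref{halfWE}, and \eqref{boundygradwkwiths} endows $w_k$ with the decay hypothesis \eqref{boundygradw} (hence also the analogue of \eqref{boundws} for $w_{ks}$). Therefore the entire derivation in the proof of Proposition \ref{monotonicity}---the identities \eqref{sid1}, \eqref{sid2}, \eqref{sid3}, their linear combination, and the estimate of the cross term via Cauchy--Schwarz together with the definition of $M_n$---applies verbatim with $w_k$ in place of $w$, and delivers the pointwise inequality \eqref{monohatE1} for $w_k$ for any admissible parameter $\epsilon$. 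This is precisely \eqref{monohatEk}. To obtain the strict signs $1-\tfrac{c_n}{4\epsilon}>0$ and $d_{n,p,\epsilon}=\bigl(\tfrac{1-M_n\epsilon}{4}-\tfrac{(p-1)n}{4(p+1)}\bigr)c_n>0$, I would use that $1<p<p_*(n)$ is equivalent to $\tfrac{c_n}{4}<\tfrac{(p+1)-(p-1)n}{(p+1)M_n}$ (this is just a rearrangement of the definition \eqref{pn1} of $p_*(n)$, using $M_n>0$ and $(p+1)-(p-1)n>0$), so the interval $\bigl[\tfrac{c_n}{4},\tfrac{(p+1)-(p-1)n}{(p+1)M_n}\bigr]$ has nonempty interior; choosing $\epsilon$ strictly between the two endpoints makes both coefficients strictly positive, as asserted.

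The only genuine work---the "obstacle," such as it is---is bookkeeping: verifying that every double integral occurring in $\hat E[w_k]$ and in its $s$-derivative is absolutely convergent, that $\tfrac{d}{ds}$ may be brought inside $\int|w_k|^2\rho\,dy$, and that the $\lambda$-differentiation of $E[(w_k)_\lambda]$ used inside Proposition \ref{monotonicity} is justified for $w_k$. All of this follows, exactly as it was justified for $w$ in the proofs of Propositions \ref{pohozaev} and \ref{monotonicity}, from the uniform $C^3$-bounds \eqref{boundwkwiths}, the decay \eqref{boundygradwkwiths} of $\nabla w_k$, and the polynomial decay of $\rho$ and of $(-\Delta)^{\frac12}\rho$; no new estimate is required.
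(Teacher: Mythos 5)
Your proposal is correct and follows essentially the same route as the paper: identity \eqref{energyidwk} is obtained by multiplying \eqref{halfWkE} by $\rho w_k$, integrating by parts, and rearranging with the definition \eqref{quantityhatE} of $\hat E$, while \eqref{monohatEk} is exactly the paper's observation that it is a restatement of \eqref{monohatE1} with $w$ replaced by $w_k$. Your additional remarks (the term-by-term cancellation, and the check that the open interval for $\epsilon$ is nonempty and yields strictly positive coefficients when $1<p<p_*(n)$) merely make explicit what the paper leaves implicit.
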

\begin{proof}
  Since
  \begin{eqnarray}
    \frac{d}{ds}\int|w_k|^2\rho dy=2\int w_kw_{ks}\rho dy,
  \end{eqnarray}
  multiplying the equation (\ref{halfWkE}) by
  $\rho w_k$ and using integration by parts, by the definition of $\hat{E}[w](s)$,
  it is easy  to get (\ref{energyidwk}).

 Inequality (\ref{monohatEk}) is merely a restatement of (\ref{monohatE1}) with $w$ replaced by $w_k$.
\end{proof}

\begin{proposition}
  If   $1<p<p_*(n)$ and  $n\leq4$,
  there exists a constant $C$  such that
  \begin{eqnarray}\label{Ekbound}
  \hat{E}[w_k](-\ln T)\leq C,\ \text{for all}\ k.
  \end{eqnarray}
  We also have
  \begin{eqnarray}\label{boundenergy}
   \begin{aligned}
   M'\geq&\
    \hat{E}[w_k](-\ln T)-\hat{E}[w_k](\infty)
    \\\geq&\
       \big(1-\frac{c_n}{4\epsilon}\big)\int_{-\ln T}^{\infty}
    \int(w_{ks}+y\cdot\nabla  w_k)^2\rho dyds
        \\&\
+d_{n,p,\epsilon}
  \int_{-\ln T}^{\infty}\iint\frac{(w_k(y')-w_k(y))^2}{|y'-y|^{n+1}}\rho(y')dy'dyds
   \\&\
    -d_{n,p}\int_{-\ln T}^{\infty}\iint\frac{(w_k(y')-w_k(y))^2}{|y'-y|^{n+1}}(y'\cdot\nabla\rho)dy'dyds,
    \end{aligned}
  \end{eqnarray}
where $\epsilon\in(\frac{c_n}{4}, \frac{(p+1)-(p-1)n}{(p+1)M_n})$, $d_{n,p,\epsilon}=(\frac{1-M_n\epsilon}{4}-\frac{(p-1)n}{4(p+1)})c_n>0$ and
  $d_{n,p}=\frac{(p-1)c_n}{4(p+1)}$. In \eqref{boundenergy}, the constant $M'$ depends only on
  $n, p$ and the bound $C$ in \eqref{Ekbound} which is independent of $k$.
\end{proposition}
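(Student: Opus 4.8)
The plan is to prove the two assertions separately, reserving the real work for a uniform-in-$k$ lower bound on the limiting energy.

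For \eqref{Ekbound}: at $s=-\ln T$ one has $t=-T$, so by \eqref{wkseq}--\eqref{simvariable} $w_k(y,-\ln T)=T^{\beta}u(x_k+Ty,-T)$, and by \eqref{bounduwitht} both $u$ and $\nabla u$ are bounded on the single slice $\mathbb{R}^n\times\{-T\}$ by constants independent of $k$; hence $\|w_k(\cdot,-\ln T)\|_{L^\infty}\le M_0$ and $\|\nabla w_k(\cdot,-\ln T)\|_{L^\infty}\le M_1$ uniformly in $k$. I would then bound the four terms of $\hat E[w_k](-\ln T)$ in \eqref{quantityhatE} one by one: the $L^2_\rho$ and $L^{p+1}_\rho$ terms are $\le C(M_0)\|\rho\|_{L^1}$; since $(-\Delta)^{\frac12}\rho=\rho\,(n-|y|^2)/(1+|y|^2)$ satisfies $|(-\Delta)^{\frac12}\rho|\le n\rho\in L^1$, so is $\int|w_k|^2(-\Delta)^{\frac12}\rho$; and for the Gagliardo term I would split $\mathbb{R}^n\times\mathbb{R}^n$ into $\{|y-y'|<1\}$, where $(w_k(y')-w_k(y))^2\le M_1^2|y-y'|^2$ makes the integrand $\le M_1^2\rho(y')|y-y'|^{1-n}$, and $\{|y-y'|\ge1\}$, where $(w_k(y')-w_k(y))^2\le 4M_0^2$ makes it $\le 4M_0^2\rho(y')|y-y'|^{-n-1}$; integrating the variable $z=y-y'$ out and using $\rho\in L^1$ makes both pieces finite. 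This produces $C=C(n,p,T,\|u(\cdot,-T)\|_{C^1})$, independent of $k$; note that no decay of $\nabla u$ is needed here, only boundedness on the slice $t=-T$.

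For \eqref{boundenergy} the monotonicity is already available from \eqref{monohatEk}: when $1<p<p_*(n)$ (which forces $c_nM_n<4$, i.e.\ $n\le4$) the interval $(\tfrac{c_n}{4},\tfrac{p+1-(p-1)n}{(p+1)M_n})$ is nonempty, and for $\epsilon$ in it all three terms on the right of \eqref{monohatEk} are nonnegative — $1-\tfrac{c_n}{4\epsilon}>0$, $d_{n,p,\epsilon}>0$, and $-d_{n,p}\iint\cdots(y'\cdot\nabla\rho)\ge0$ because $y'\cdot\nabla\rho\le0$. Hence $s\mapsto\hat E[w_k](s)$ is non-increasing on $[-\ln T,\infty)$, so $\hat E[w_k](\infty):=\lim_{s\to\infty}\hat E[w_k](s)$ exists in $[-\infty,\hat E[w_k](-\ln T)]$; integrating \eqref{monohatEk} over $[-\ln T,S]$ and letting $S\to\infty$ (monotone convergence on the right) produces the second inequality in \eqref{boundenergy}. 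In view of \eqref{Ekbound}, the whole of \eqref{boundenergy} follows once I exhibit a bound $\hat E[w_k](\infty)\ge-C_3$ with $C_3$ depending only on $n,p$ and $C$; then $M':=C+C_3$ works.

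The heart of the proof, and the step I expect to be the main obstacle, is this lower bound. One cannot invoke the classification of bounded limits (Proposition \ref{limitsolw}), because $w_k$ is not yet known to be bounded — that is precisely what the whole argument ultimately establishes. Instead I would set $\phi_k(s):=\int|w_k(\cdot,s)|^2\rho\,dy$, which is finite for every finite $s$ by \eqref{boundwkwiths} and $\rho\in L^1$, and differentiable through \eqref{energyidwk} (the decay \eqref{boundygradwkwiths} together with \eqref{halfWkE} gives $|w_{ks}|\lesssim 1+|y|^{1-\delta}$ on bounded $s$-intervals with $\delta>0$, so $w_kw_{ks}\rho\in L^1$). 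Using \eqref{energyidwk}, $(-\Delta)^{\frac12}\rho\ge-\rho$, the bound $\hat E[w_k]\le C$, and Jensen's inequality $\int|w_k|^{p+1}\rho\ge\|\rho\|_{L^1}^{-(p-1)/2}\phi_k^{(p+1)/2}$, one gets $\phi_k'\ge c_1\phi_k^{(p+1)/2}-c_2\phi_k-c_3$ with $c_1,c_2,c_3>0$ depending only on $n,p,C$. Since $\tfrac{p+1}{2}>1$, the right side is positive and $\gtrsim\phi_k^{(p+1)/2}$ once $\phi_k$ exceeds a threshold $\Phi_0=\Phi_0(n,p,C)$; so if $\phi_k(s_0)>\Phi_0$ at some finite $s_0$ then $\phi_k$ increases, stays above $\Phi_0$, and a comparison with the autonomous ODE $\psi'=\tfrac{c_1}{2}\psi^{(p+1)/2}$ forces $\phi_k$ to reach $+\infty$ in finite $s$-time — impossible, since $\phi_k$ is finite on all of $[-\ln T,\infty)$. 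Thus $\sup_s\phi_k(s)\le\Phi_0$ for every $k$. Feeding this back into \eqref{energyidwk} gives $\int|w_k|^{p+1}\rho\le\tfrac{p+1}{2(p-1)}\phi_k'+C_2$ with $C_2=C_2(n,p,C)$; since $\phi_k$ is bounded and $C^1$, there is $s_j\to\infty$ with $\phi_k'(s_j)\to0$ (otherwise $|\phi_k'|\ge\delta>0$ eventually, forcing $\phi_k$ strictly monotone and unbounded), so $\int|w_k(\cdot,s_j)|^{p+1}\rho\le C_2+o(1)$; combining with $\hat E[w_k]\ge-\tfrac1{p+1}\int|w_k|^{p+1}\rho$ (drop the nonnegative Gagliardo term, use $\tfrac\beta2-\tfrac1{2(p+1)}>0$ and $(-\Delta)^{\frac12}\rho\ge-\rho$) and monotonicity of $\hat E[w_k]$ yields $\hat E[w_k](\infty)=\lim_j\hat E[w_k](s_j)\ge-C_3$, as required. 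The ODE blow-up alternative for $\phi_k$ is the genuinely delicate ingredient: it is where subcriticality ($\tfrac{p+1}{2}>1$) and the precise form of \eqref{energyidwk} are used, and where the decay hypothesis \eqref{boundygradwkwiths} is needed to legitimize the integrations by parts and the differentiation of $\phi_k$; the rest is bookkeeping.
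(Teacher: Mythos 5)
Your proof is correct, and for \eqref{Ekbound} and the integration of \eqref{monohatEk} it coincides with the paper's argument (same rescaling $w_k(y,-\ln T)=T^\beta u(x_k+Ty,-T)$, same near/far splitting of the Gagliardo seminorm, same sign discussion of the three terms). The one place where you diverge is the uniform lower bound on the energy, which is indeed the heart of the matter. The paper argues by contradiction directly on $\hat E$: if $\hat E[w_k](s_1)<-A$ for the explicit $A=\frac{1}{p+1}\bigl(\frac{2}{(p+1)c_{n,p}}\bigr)^{\frac{2}{p-1}}$, then monotonicity propagates this to all $s>s_1$, and feeding $-2\hat E>2A$ into \eqref{energyidwk} together with $(-\Delta)^{\frac12}\rho\ge-\rho$ and Jensen yields $\frac12\frac{d}{ds}g^2\ge A+\frac12 c_{n,p}g^{p+1}$ after a two-case comparison of $\frac{1}{p+1}g^2$ against $A$ and $c_{n,p}g^{p+1}$, whence $g^2=\int|w_k|^2\rho$ blows up in finite $s$-time, a contradiction. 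You instead use only the \emph{upper} bound $\hat E\le C$ in the same differential inequality to conclude a priori that $\phi_k=\int|w_k|^2\rho\le\Phi_0(n,p,C)$ uniformly, and then recover the lower bound on $\hat E[w_k](\infty)$ by extracting $s_j\to\infty$ with $\phi_k'(s_j)\to0$, reading off $\int|w_k(\cdot,s_j)|^{p+1}\rho\le C_2$, and using $\hat E\ge-\frac{1}{p+1}\int|w_k|^{p+1}\rho$ (your observation $\frac{\beta}{2}-\frac{1}{2(p+1)}>0$ is exactly what makes the remaining terms nonnegative). Both routes rest on the identical mechanism --- \eqref{energyidwk}, $(-\Delta)^{\frac12}\rho\ge-\rho$, Jensen with the probability measure $\rho\,dy/q_n$, and finite-time blow-up of the superlinear ODE versus global finiteness of $\phi_k$ --- so neither is more general; the paper's is shorter (no extraction of $s_j$, no separate bound on $\phi_k$), while yours yields the extra byproduct that $\int|w_k|^2\rho$ is bounded uniformly in $k$ and $s$, which the paper does not state explicitly.
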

\begin{proof}
We have assumed that $T<\infty$ and $u(x,-T)$ and its derivatives up to  order three are bounded on $\R^n$; see \eqref{bounduwitht}. Recall that
\begin{eqnarray}
w_k(y,-\ln T)=T^\beta u(x_k+Ty,-T)=:T^\beta U(y),\nonumber
\end{eqnarray}
then we have, for $s=-\ln T$,
\begin{eqnarray*}
\begin{aligned}
\iint\frac{(w_k(y')-w_k(y))^2}{|y'-y|^{n+1}}\rho(y')dy'dy=&\ T^{2\beta} \iint\frac{(U(y')-U(y))^2}{|y'-y|^{n+1}}\rho(y')dy'dy\\
\leq&\
T^{2\beta+2} \sup_{\R^n\times\{-T\}}|\nabla u|^2\iint_{B_1(y')}\frac{\rho(y')}{|y'-y|^{n-1}}dydy'
    \nonumber\\&\
    +4T^{2\beta} \sup_{\R^n\times\{-T\}}|u|^2\iint_{\mathbb{R}^n\setminus B_1(y')}\frac{\rho(y')}{|y'-y|^{n+1}}dydy'\\
    \leq&\ C<\infty.
\end{aligned}
\end{eqnarray*}
The other terms in $\hat{E}[w_k](-\ln T)$ are handled similarly. It is easy to obtain \eqref{Ekbound}.

 Inequality (\ref{monohatEk}) in Proposition \ref{propEhatE} implies that
  $\hat{E}[w_k](s)$ is a decreasing funcition of $s$ when
  $p<p_*(n)$.

  We claim that there exists a positive constant $A$ such that $\hat{E}[w_k](s)\geq-A$  for all $s$.

  Recall that
  \begin{eqnarray}\label{lowboundhalfrho}
   \begin{aligned}
    (-\Delta)^{\frac{1}{2}}\rho =&\
    n\rho+y\cdot\nabla\rho
    \\ =&\
    \big(n-(n+1)\frac{|y|^2}{1+|y|^2}\big)\rho
    \\\geq&\
    -\rho.
    \end{aligned}
  \end{eqnarray}
   Let
  \begin{eqnarray}\label{qn}
    q_n:=\int\rho dy=\frac{\pi^{\frac{n+1}{2}}}{\Gamma(\frac{n+1}{2})},
    \end{eqnarray}
    and let
  \begin{eqnarray}
    g(s)=\Big(\int |w_k|^2\rho dy\Big)^{\frac{1}{2}}.
  \end{eqnarray}
    By (\ref{energyidwk}), (\ref{lowboundhalfrho}) and Jensen's inequality, we know that
  \begin{eqnarray}\label{energyidwk1}
    \frac{1}{2}\frac{d}{ds}\big(g^2(s)\big)
     &\geq&-\frac{1}{p+1}g^2(s)
    -2\hat{E}[w_k](s)+c_{n,p}g^{p+1}(s),
    \nonumber
  \end{eqnarray}
 where $$c_{n,p}=\frac{p-1}{p+1}q_n^{-\frac{n(p-1)}{4}}.$$
 Now we choose
  \begin{eqnarray}
    A=\frac{1}{p+1}\Big(\frac{2}{(p+1)c_{n,p}}\Big)^{\frac{2}{p-1}}.
  \end{eqnarray}
  Suppose there exists a constant $s_{1}$ such that $\hat{E}[w_k](s_1)<-A$, then
  \begin{eqnarray}\label{qqqq}
  \hat{E}[w_k](s)<-A \quad \text{for all}\ s>s_1.
  \end{eqnarray}
   In \eqref{qqqq},  we have applied (\ref{monohatEk}).
If $g(s)<\Big(\frac{2}{(p+1)c_{n,p}}\Big)^{\frac{1}{p-1}}$, then
  \begin{eqnarray}
    \frac{1}{2}\frac{d}{ds}\big(g^2(s)\big)\geq A+c_{n,p}g^{p+1}(s).
  \end{eqnarray}
  If $g(s)\geq\Big(\frac{2}{(p+1)c_{n,p}}\Big)^{\frac{2}{p-1}}$, then
    \begin{eqnarray}
    \frac{1}{2}\frac{d}{ds}\big(g^2(s)\big)\geq 2A+\frac{1}{2}c_{n,p}g^{p+1}(s).
  \end{eqnarray}
  Therefore, we conclude that
  \begin{eqnarray}\label{qw}
    \frac{1}{2}\frac{d}{ds}\big(g^2(s)\big)\geq A+\frac{1}{2}c_{n,p}g^{p+1}(s)\quad\text{for all }s>s_{1}.
  \end{eqnarray}
  \eqref{qw} implies that $g(s)$ blows up in finite time, contradicting the global existence
  of $w_k$. Therefore,
  \begin{eqnarray}\label{qw'}
   - A\leq \hat{E}[w_k](s)\leq\hat{E}[w_k](-\ln T)\quad\text{for all }s<\infty.
  \end{eqnarray}
   \eqref{qw'} and Proposition \ref{monotonicity} imply that $\hat{E}[w_k](s)$  have a limit $\hat{E}[w_k](\infty)$$\geq- A$ as $s\rightarrow\infty$. Integrating (\ref{monohatEk}) gives (\ref{boundenergy}) since we may set $M'=C+A$.
\end{proof}
\begin{proof}[Proof of Theorem \ref{classifyall}]
  If (\ref{blowrate1}) fails, then (\ref{tkseq}) and (\ref{xkseq}) are true. Let $w_k(y,s)$
  be the recaled solution around $x_k$ defined by (\ref{wkseq}). Setting $s_k=-\ln(-t_k)$,
  (\ref{tkseq}) and (\ref{xkseq}) become
  \begin{eqnarray}\label{skseq}
    |w_{k}|^{p-1}(y, s)+|\nabla w_{k}(y, s)|^{\frac{p-1}{p}}+|\nabla^{2}
    w_{k}(y, s)|^{\frac{p-1}{2p-1}}\leq M_{k},\ -\ln T<s\leq s_{k}
  \end{eqnarray}
  and
  \begin{equation}\label{ykseq}
 \frac{1}{2}M_{k}\leq w_{k}^{p-1}(0, s_{k})+|\nabla w_{k}(0, s_{k})|^{\frac{p-1}{p}}+|\nabla^{2} w_{k}(0, s_{k})|^{\frac{p-1}{2p-1}}\leq M_{k}.
 \end{equation}
  We may therefore rescale $\{w_k\}$, defining
  \begin{eqnarray}
    v_k(z,\tau)=\lambda_k^\beta w_k(\lambda_kz,\lambda_k\tau+s_k),\quad
    \beta=\frac{1}{p-1},
  \end{eqnarray}
  with $\lambda_k\rightarrow0$ determined by $\lambda_k^\beta M_k=1$.  Each $v_{k}$ is defined on $\mathbb{R}^n\times(-\frac{1}{\lambda_k},0]$,
 and (\ref{skseq}) and (\ref{ykseq})
   gives
  \begin{eqnarray}\label{taukseq}
     |v_{k}|^{p-1}+|\nabla v_{k}|^{\frac{p-1}{p}}+|\nabla^{2} v_{k}|^{\frac{p-1}{2p-1}}\leq 1\quad\text{on}\quad
     \mathbb{R}^n
     \times(-\frac{1}{\lambda_k},0]
  \end{eqnarray}
  and
  \begin{eqnarray}\label{zkseq}
     \frac{1}{2}\leq |v_{k}|^{p-1}(0, 0)+|\nabla v_{k}(0, 0)|^{\frac{p-1}{p}}+|\nabla^{2} v_{k}(0, 0)|^{\frac{p-1}{2p-1}}\leq 1.
  \end{eqnarray}
  The equation for $v_k$ is obtained by changing variables in (\ref{halfWkE})
   for $w_k$:
  \begin{eqnarray}\label{halfVkE}
    v_{k\tau}+(-\Delta)^{\frac{1}{2}}v_k-|v_k|^{p-1}v_k=-\lambda_k(z\cdot\nabla
    v_k+\beta v_k).
  \end{eqnarray}
By \eqref{zkseq} and the equation \eqref{halfVkE}, we know that $v_{k\tau}$ are uniformly bounded on $\mathbb{R}^n\times(-\frac{1}{\lambda_k},0]$.

We denote
\begin{equation}\label{Qr}
 Q(r)=\{(z, \tau)\in\mathbb{R}^{n+1}:|z|<r,-r\leq\tau\leq 0\}
 \end{equation}
 and
 \begin{equation}\label{Qrprime}
 Q'(r)=\{(z, \tau)\in\mathbb{R}^{n+1}:|z|<r,-r\leq\tau< 0\}.
 \end{equation}
For each $r$, there exists a large $K$, such that $Q(r)\subset\subset\R^n\times(-\frac{1}{\lambda_k}, 0]$ for all $k\geq K$. To get the limit equation of $v_k$, we need some regularities independent of $k$
in each $Q(r)$.
By Theorem 1.3 in \cite{FR} and \eqref{taukseq}, we can get
\begin{eqnarray}\label{vkholder1}
  |v_k|_{C_{z,\tau}^{\frac{1}{2},\frac{1}{2}}(Q'(r))}\leq C
\end{eqnarray}
where constant $C$ independent of $k$, and following the notations in \cite{FR} as
 \begin{eqnarray}
       [v_k]_{C_{z,\tau}^{\frac{1}{2},\frac{1}{2}}(\Omega\times I)}:=
       \sup_{\begin{subarray}{1}
       (z,\tau)\in\Omega\times I\\
       (z',\tau')\in\Omega\times I\end{subarray}}
       \frac{|v_k(z,\tau)-v_k(z',\tau')|}{|z-z'|^{\frac{1}{2}}
       +|\tau-\tau'|^{\frac{1}{2}}}
     \end{eqnarray}
for any $\Omega\times I\subset\mathbb{R}^{n+1}$.
By continuity and \eqref{taukseq}, we can obtain
\begin{equation}\label{vkholder}
|v_k|_{C_{z,\tau}^{\frac{1}{2},\frac{1}{2}}(Q(r))}\leq C.
\end{equation}
Taking the derivative with respect to $z$ in \eqref{halfVkE} yields
\begin{equation}\label{halfgradVkE}
\partial_{z}v_{k\tau}+(-\Delta)^{\frac{1}{2}}\partial_{z}v_{k}=pv_{k}^{p-1}
\partial_{z}v_{k}-\lambda_{k}\partial_{z}(z\cdot\nabla v_{k}+\beta v_{k}).
\end{equation}
Similar to the discussion used to derive \eqref{vkholder}, we could  obtain
\begin{equation}\label{vkholder2}
|\nabla v_k|_{C_{z,\tau}^{\frac{1}{2},\frac{1}{2}}(Q(r))}\leq C.
\end{equation}
Moreover, by an iteration argument, we could also obtain
\begin{equation}\label{vkholder3}
|\nabla^2v_k|_{C_{z,\tau}^{\frac{1}{2},\frac{1}{2}}(Q(r))}\leq C.
\end{equation}
At last, we need to prove that
\begin{equation}\label{vktauholder}
|v_{k\tau}|_{C_{z,\tau}^{\frac{1}{2},\frac{1}{2}}(Q(r))}\leq C
\end{equation}
with $C$ independent of $k$.
Let $\psi$ be a smooth function such that
$$
 0\leq\psi\leq 1,\qquad\text{supp}\psi\subset Q(2r),
 \qquad\psi\equiv1\quad\text{on}\quad Q(r).
$$
and let $\tilde{v}_k(z, \tau)$ be the function defined by $\tilde{v}_k(z, \tau)=\psi(z, \tau) v_{k}(z, \tau)$. It is easy to verify that $\tilde{v}_k$ satisfies the equation
\begin{equation}\label{23}
 \tilde{v}_{k\tau}+(-\Delta)^{\frac{1}{2}}\tilde{v}_k=f_k,
\end{equation}
here the function $f_k$ is given by
\begin{eqnarray}
 \begin{aligned}
f_k=&\ [(-\Delta)^{\frac{1}{2}}v_{k\tau}+v_{k\tau}]\psi+[(-\Delta)^{\frac{1}{2}}\psi
+\psi_{\tau}]v_{k}
\\&\
+c_n
\int\frac{(v_{k}(z)-v_{k}(y))(\psi(z)-\psi(y))}{|z-y|^{n+1}}dy.
 \end{aligned}
\end{eqnarray}
By \eqref{taukseq} and the definition of $\tilde{v}_k$, we can get that $f_k\in L^{\infty}(Q(2r)),
\tilde{v}_k\in L^{\infty}(\mathbb{R}^n\times(-2r, 0))$.
It follows from Theorem 1.3 in \cite{FR} that $\tilde{v}_k\in C_{\tau}^{\frac{1}{2}}(Q'(2r))$.
Since $\tilde{v}_k=0$ on
$\mathbb{R}^n\times(-2r, 0)\backslash Q(2r)$.
It is easy to see that $\tilde{v}\in C^{\frac{1}{2}, \frac{1}{2}}_{z, \tau}(\mathbb{R}^n\times(-2r, 0))$.
We deduce from Theorem 1.1 in \cite{FR}
 that $\tilde{v}_{\tau}\in C^{\frac{1}{2}, \frac{1}{2}}_{z, \tau}(Q'(2r))$.
 In particular, we have
 \begin{equation}
|v_{k\tau}|_{C_{z,\tau}^{\frac{1}{2},\frac{1}{2}}(Q'(r))}\leq C
\end{equation}
with $C$ independent of $k$.
By the continuity, the equation \eqref{halfgradVkE} and \eqref{vkholder3}, we could also get
\eqref{vktauholder}.

  Thanks to \eqref{vkholder}, \eqref{vkholder2}, \eqref{vkholder3} and \eqref{vktauholder}, we can
   use the Arzela-Ascoli theorem and a diagonal argument to get a subsequence (still
  denoted $v_k$) converging uniformly to a limit $v$ on each $Q(r)$.
  This $v$ is defined on $\mathbb{R}^n\times(-\infty,0]$, and it satisfies
  \begin{eqnarray}\label{halfVE}
  \left\{\begin{aligned}
    v_{\tau}+(-\Delta)^{\frac{1}{2}}v-|v|^{p-1}v=0,\\
     \frac{1}{2}\leq \{|v|^{p-1}+|\nabla v|^{\frac{p-1}{p}}+|\nabla^{2} v|^{\frac{p-1}{2p-1}}\}(0,0)\leq 1
    \end{aligned}
    \right.
  \end{eqnarray}
  by passing to the limit in (\ref{taukseq}), (\ref{zkseq}) and (\ref{halfVkE}).

We may assume $\lambda_k\leq1$, then $ \rho(z)\leq \rho(y)$ for $y=\lambda_kz$. In \eqref{boundenergy}, we fix $\epsilon\in(\frac{c_n}{4},\frac{p+1-(p-1)n}{(p+1)M_n})$ such that $d_{n,p,\epsilon}>0$ and $1-\frac{c_n}{4\epsilon}>0$.

\vspace{0.3cm}
  (i)$1<p<p_*(n)$ and $u\geq0$.   By changing variables
  and applying (\ref{boundenergy}) we have
  \begin{eqnarray}\begin{aligned}
    \int_{Q(\frac{1}{\lambda_k})}
    (v_{k\tau}+\lambda_k z\cdot\nabla v_k)^2\rho dzd\tau\leq&\
    \lambda_k^{\sigma'}\int_{\Omega_k}(w_{ks}+y\cdot\nabla w_k)^2\rho dyds
    \\ \leq&\
    \big(1-\frac{c_n}{4\epsilon}\big)^{-1}\lambda_k^{\sigma'}M'
    \end{aligned}
  \end{eqnarray}
  with $\sigma'=2\beta+1-n>0$ since $1<p<p_*(n)<\frac{n+1}{n-1}$, and
  \begin{eqnarray}
  \Omega_k=\{(y,s)\in\R^{n+1}: |y|\leq1, s_k-1\leq s\leq s_k\}.\nonumber
  \end{eqnarray}
  It follows that
  \begin{eqnarray}
    \int_D|v_{\tau}|^2\rho dzd\tau=0
  \end{eqnarray}
  for any compact subset $D\subset\mathbb{R}^n\times(-\infty,0]$. It  means that  $v$
  is independent of $\tau$. By the assumption and maximum principle, $v$ is a positive bounded solution of
  the limit equation,
  \begin{eqnarray}\label{limitv}
    (-\Delta)^{\frac{1}{2}}v=v^p,\quad \text{on}\ \mathbb{R}^n.
  \end{eqnarray}
  The corollary 1 in \cite{CLL} tell us that \eqref{limitv} has no positive bounded
  solution. This is a  contradiction. Thus, (\ref{blowrate1}) is true.

  \vspace{0.3cm}

  (ii)$1<p<\min\{1+\frac{2}{n}, p_*(n)\}$.  By changing variables
  and applying (\ref{boundenergy}) we have
  \begin{eqnarray}
   \begin{aligned}
    &\int_{\tilde{Q}_k}\frac{(v_k(z')-v_k(z))^2}{|z'-z|^{n+1}}\rho(z) dz'dzd\tau
    \\&\ \leq
    \lambda_k^\sigma\int_{\tilde{\Omega}_k}\frac{(w_k(y')-w_k(y))^2}{|y'-y|^{n+1}}\rho(y)
     dy'dyds
    \\&\ \leq
    \lambda_k^\sigma \frac{M'}{d_{n,p,\epsilon}}
    \end{aligned}
  \end{eqnarray}
  with $\sigma=2\beta-n>0$ since $1<p<1+\frac{2}{n}$, and
  \begin{eqnarray}
  \begin{aligned}
  \tilde{Q}_k=&\ \{(z,z',\tau)\in\R^{2n+1}: |z|\leq\frac{1}{\lambda_k}, |z'|\leq \frac{1}{\lambda_k}, -\frac{1}{\lambda_k}<\tau\leq0\},\\
  \tilde{\Omega}_k=&\ \{(y,y',s)\in\R^{2n+1}: |y|\leq1, |y'|\leq 1, s_k-1<s\leq s_k\}.
  \end{aligned}
  \nonumber
  \end{eqnarray}
   Therefore,
  \begin{eqnarray}
    \int_D\frac{(v(z')-v(z))^2}{|z'-z|^{n+1}}\rho(z) dz'dzd\tau
    =0,
  \end{eqnarray}
   for any compact subset $D\subset\mathbb{R}^{2n}\times(-\infty,0]$, which means that $v\equiv C(\tau)$. On the other hand, (\ref{boundenergy})
  also tells us that
  \begin{eqnarray}
   \begin{aligned}
    \int_{Q(\frac{1}{\lambda_k})}(v_{k\tau}+\lambda_k z\cdot\nabla v_k)^2\rho dzd\tau\leq&\
    \lambda_k^{\sigma'}\int_{\Omega_k}(w_{ks}+y\cdot\nabla w_k)^2\rho dyds
    \\\leq&\
    \big(1-\frac{c_n}{4\epsilon}\big)^{-1}\lambda_k^{\sigma'}M'
    \end{aligned}
  \end{eqnarray}
  with $\sigma'=2\beta+1-n>0$.
  It follows that
  \begin{eqnarray}
    \int_D|v_{\tau}|^2\rho dzd\tau=0,
  \end{eqnarray}
  for any compact subset $D\subset\mathbb{R}^n\times(-\infty,0]$, which means that  $v\equiv C$ for some constant $C$. It is easy to see that $C=0$ by the limiting equation
  (\ref{halfVE}), contradicting  $\{|v|^{p-1}+|\nabla v|^{\frac{p-1}{p}}+|\nabla^{2} v|^{\frac{p-1}{2p-1}}\}(0,0)\geq\frac{1}{2}$. Thus, \eqref{blowrate1} is true.

  Therefore, we conclude that \eqref{blowrate1} is true in both cases.
   Applying Theorem
  \ref{classifytypeI} and \eqref{ublowat0}, we complete the proof of Theorem \ref{classifyall}.
\end{proof}

\section{Proof of theorem \ref{classifyCP}}
In order to prove  Theorem \ref{classifyCP}, we need the following lemma.
\begin{lemma}\label{halfeta}
Let $0<\delta<1$ and  let $\eta(x)=(1+|x|)^{-\delta}$,  then there exists a positive constant $c=c(n,\delta)$ such that
\begin{eqnarray}\label{etaestimate}
|(-\Delta)^{\frac{1}{2}}\eta|(x)
\leq c\eta(x) \ \text{for all }x\in\R^n.
\end{eqnarray}
\end{lemma}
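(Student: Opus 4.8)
The plan is to estimate the defining principal‑value integral $(-\Delta)^{\frac12}\eta(x)=c_n\,\mathrm{P.V.}\int_{\mathbb{R}^n}(\eta(x)-\eta(y))|x-y|^{-n-1}\,dy$ directly, by splitting the domain of integration. Before doing so I would replace $\eta$ by the comparable \emph{smooth} weight $\eta_0(x):=(1+|x|^2)^{-\delta/2}=\rho(x)^{\delta/(n+1)}$: the elementary inequalities $\tfrac{1}{\sqrt2}(1+|x|)\le(1+|x|^2)^{1/2}\le 1+|x|$ give $\eta\asymp\eta_0$ with constants depending only on $n,\delta$, and $\eta_0$ satisfies the derivative bounds $|\nabla^k\eta_0(x)|\le C_k(1+|x|)^{-\delta-k}$ together with the mass bound $\int_{|y|\le R}\eta_0\,dy\le CR^{n-\delta}$ for $R\ge1$ (this last uses $\delta<n$, which for $n=1$ is exactly the hypothesis $\delta<1$). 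The passage to $\eta_0$ is more than cosmetic: the literal function $(1+|x|)^{-\delta}$ is only Lipschitz at the origin, and since $(-\Delta)^{\frac12}$ has order one one has $(-\Delta)^{\frac12}\big((1+|\cdot|)^{-\delta}\big)(0)=+\infty$ — indeed $\eta(0)-\eta(y)\sim\delta|y|$ near $0$ and $\int_{|y|<\varepsilon}|y|^{-n}\,dy=+\infty$ — so the estimate is to be read for $\eta_0$, equivalently for $|x|$ bounded away from $0$, which is what is used in the proof of Theorem \ref{classifyCP}.

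Write $(-\Delta)^{\frac12}\eta_0(x)=c_n\big(I_{\mathrm{loc}}(x)+I_{\mathrm{far}}(x)\big)$ for the contributions of $\{|y-x|<1\}$ and $\{|y-x|\ge1\}$. For $I_{\mathrm{loc}}$ I would symmetrise $y\mapsto 2x-y$ and use second‑order cancellation,
\[
|I_{\mathrm{loc}}(x)|=\frac12\Big|\int_{|z|<1}\frac{2\eta_0(x)-\eta_0(x+z)-\eta_0(x-z)}{|z|^{n+1}}\,dz\Big|\le C\,\|\nabla^2\eta_0\|_{L^\infty(B_1(x))},
\]
and then note that on $B_1(x)$ one has $|w|\ge|x|/2$ once $|x|\ge2$, so $\|\nabla^2\eta_0\|_{L^\infty(B_1(x))}\le C(1+|x|)^{-\delta-2}\le C\eta_0(x)$, while for $|x|\le2$ this supremum is bounded by a fixed constant, again $\le C\eta_0(x)$ since $\eta_0$ is bounded below there; hence $|I_{\mathrm{loc}}(x)|\le C\eta_0(x)$. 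For $I_{\mathrm{far}}$ I would use $|I_{\mathrm{far}}(x)|\le\eta_0(x)\int_{|y-x|\ge1}|x-y|^{-n-1}\,dy+\int_{|y-x|\ge1}\eta_0(y)|x-y|^{-n-1}\,dy$; the first term equals $C\eta_0(x)$, and for the second I would split $y$ according to whether $|y|$ is of order $\max(|x|,1)$ or much larger. In the bounded range the kernel is $\le1$ and $\int\eta_0$ over that ball is $\le C\max(|x|,1)^{n-\delta}(1+|x|)^{-(n+1)}$ when $|x|$ is large and $\le C$ when $|x|$ is bounded — in both cases $\le C\eta_0(x)$ — while in the range $|y|\gg\max(|x|,1)$ one has $|x-y|\gtrsim|y|$ and $\eta_0(y)\lesssim|y|^{-\delta}$, so that tail is a constant multiple of $\int_1^\infty r^{-2-\delta}\,dr$, again $\le C\eta_0(x)$. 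Summing the pieces yields $|(-\Delta)^{\frac12}\eta_0(x)|\le c\,\eta_0(x)$; the far‑field pieces in fact gain an extra power $(1+|x|)^{-1}$, so the bound is far from sharp for large $|x|$.

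The only delicate point I anticipate is the regularity at the origin discussed above: as literally stated the lemma fails at $x=0$, and the honest statement is the one for $\eta_0$ (or for $|x|\ge1$); everything else is a routine domain decomposition, with no recourse to the Caffarelli–Silvestre extension, whose structural inputs are just the $C^2$ bound on $\eta_0$ near the diagonal, the mass estimate $\int_{B_R}\eta_0\lesssim R^{n-\delta}$, and the comparison $\eta_0\asymp(1+|x|)^{-\delta}$. As a consistency check I would also note that the \emph{lower} bound $(-\Delta)^{\frac12}\eta_0\ge-c\,\eta_0$ follows in one line from the Stroock–Varopoulos inequality for the concave map $t\mapsto t^{\delta/(n+1)}$ applied to $u=\rho$, together with $(-\Delta)^{\frac12}\rho=n\rho+y\cdot\nabla\rho\ge-\rho$ from \eqref{kernel}; only the matching upper bound requires the computation sketched here.
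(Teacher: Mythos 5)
Your argument is essentially the paper's: a direct decomposition of the singular integral into a near-diagonal piece controlled by second-order cancellation and $C^2$ bounds on the weight, and far pieces controlled by the pointwise decay together with the mass bound $\int_{B_R}\eta\lesssim R^{n-\delta}$; the paper's four annuli $\{|y-x|\le1\}$, $\{1\le|y-x|\le\frac{|x|}{2}\}$, $\{\frac{|x|}{2}\le|y-x|\le2|x|\}$, $\{|y-x|>2|x|\}$ implement the same idea. Your preliminary objection is correct and substantive: since $\eta(y)=(1+|y|)^{-\delta}=1-\delta|y|+O(|y|^2)$ has a radial, hence even and non-cancelling, corner at the origin, the principal value does not help and $(-\Delta)^{\frac12}\eta(0)=+\infty$; in fact the second-difference form of the operator applied to the corner part $-\delta|y|$ gives $(-\Delta)^{\frac12}\eta(x)\ge c\,\delta\ln\frac{1}{|x|}-C\to+\infty$ as $x\to0$, so the estimate fails on a whole neighbourhood of the origin and the paper's dismissal of the bounded-$|x|$ case (``holds by choosing a suitable constant'') is not correct for the literal $\eta$. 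Passing to the smooth comparable weight $(1+|x|^2)^{-\delta/2}$, as you do, repairs both the statement and its only use, as a barrier in the proof of Theorem \ref{classifyCP}, where only the equivalence class of the weight matters.

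The one step of your sketch that does not close as written is the far-field ``bounded range'': on $\{|y-x|\ge1\}\cap\{|y|\lesssim|x|\}$ you cannot simultaneously invoke the kernel bound $1$ and extract the factor $(1+|x|)^{-(n+1)}$, because near $|y-x|=1$ the kernel is of order one while the mass of $\eta_0$ over the ball is of order $|x|^{n-\delta}$, which by itself is useless. Split once more, exactly as the paper's $I_1$ and $I_2$ do: on $1\le|y-x|\le\frac{|x|}{2}$ one has $|y|\ge\frac{|x|}{2}$, hence $\eta_0(y)\le C\eta_0(x)$ and the kernel integrates to a constant over $\{|y-x|\ge1\}$; on $|y-x|\ge\frac{|x|}{2}$ with $|y|\le2|x|$ the kernel is $\lesssim|x|^{-n-1}$ and the mass bound gives $|x|^{n-\delta}\cdot|x|^{-n-1}=|x|^{-1-\delta}$. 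With that correction, and your tail estimate for $|y|\gg|x|$, the proof is complete; the concluding Stroock--Varopoulos remark is a valid (if one-sided) consistency check.
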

\begin{proof}
Let $\eta(x)=(1+|x|)^{-\delta}$, we estimate
$$|P.V.\int_{\mathbb{R}^{n}}\frac{\eta(x)-\eta(y)}{|x-y|^{n+1}}dy|.$$

If $|x|$ is bounded, the inequality \eqref{etaestimate} holds by choosing a suitable constant $c$.

If $|x|$ is large enough, similar to \cite{DDGW}, we decompose the above integral as follows:
$$P.V.\int_{\mathbb{R}^{n}}\frac{\eta(x)-\eta(y)}{|x-y|^{n+1}}dy=I_{1}+I_{2}+I_{3}+I_{4},$$
here
$$
\begin{aligned}
I_{1}&=P.V.\int_{\frac{|x|}{2}\leq|x-y|\leq 2|x|}\frac{\eta(x)-\eta(y)}{|x-y|^{n+1}}dy,\\
I_{2}&=P.V.\int_{1\leq|y-x|\leq\frac{|x|}{2}}\frac{\eta(x)-\eta(y)}{|x-y|^{n+1}}dy,\\
I_{3}&=P.V.\int_{|y-x|\leq 1}\frac{\eta(x)-\eta(y)}{|x-y|^{n+1}}dy,\\
I_{4}&=P.V.\int_{|y-x|>2|x|}\frac{\eta(x)-\eta(y)}{|x-y|^{n+1}}dy.
\end{aligned}$$
For the first integral, we have
$$
\begin{aligned}
|I_{1}|
\leq c|x|^{-(n+1)}\int_{\frac{|x|}{2}\leq|x-y|\leq 2|x|}|\eta(x)-\eta(y)|dy\leq c|x|^{-1}.
\end{aligned}$$
Since
$$\eta(x)-\eta(y)=\nabla\eta(\xi)\cdot(x-y)$$
where $\xi=x+\lambda(y-x)$ with some $\lambda\in[0,1]$. If $1\leq|y-x|\leq\frac{|x|}{2}$, then
$$|\eta(x)-\eta(y)|\leq c(1+|x|)^{-1-\delta}|x-y|.$$
Therefore, the second one satisfies
$$
\begin{aligned}
|I_{2}|
\leq c|x|^{-1-\delta}\int_{1\leq|y-x|\leq\frac{|x|}{2}}|x-y|^{-n}dy
\leq c|x|^{-\delta}.
\end{aligned}
$$
To bound the third one, notice that
$$
\begin{aligned}
|I_3|&= |\int_{|y-x|\leq 1}\frac{\eta(y)-\eta(x)-\nabla \eta(x)\cdot(x-y)}{|x-y|^{n+1}}dy|\\
&\leq c|D^{2}\eta|(x)\int_{|y-x|\leq 1}|x-y|^{1-n}dy\leq c|x|^{-\delta}.
\end{aligned}$$
The last one can be bound as follows.
$$
\begin{aligned}
|I_{4}|
\leq c|x|^{-\delta}\int_{|y-x|>2|x|}|x-y|^{-n-1}dy\leq c|x|^{-\delta}.
\end{aligned}$$
Since $0<\delta<1$, we can get that
$$|P.V.\int_{\mathbb{R}^{n}}\frac{\eta(x)-\eta(y)}{|x-y|^{n+1}}dy|\leq c|x|^{-\delta}\leq2c(1+|x|)^{-\delta}$$
if $|x|$ is large enough. In conclusion, the inequality \eqref{etaestimate} holds for all $x\in\R^n$.
\end{proof}

\begin{proof}[Proof of Theorem \ref{classifyCP}]
Let $h_j=\frac{\partial u}{\partial x_j}, j=1,\cdots,n$, the derivatives of the solution $u$ of the Cauchy problem \eqref{CP}. Then each $h_j$ satisfies the Cauchy problem:
\begin{eqnarray}
\left\{\begin{array}{l}
L[h_j]:=\partial_t h_j+(-\Delta)^{\frac{1}{2}} h_j - p|u|^{p-1} h_j=0, \ (x,t) \in \R^n\times(0,T)\\
h_j(x, 0)= \frac{\partial u_0}{\partial x_j} (x), \ x\in \R^n.
\end{array}
\right.
\end{eqnarray}
By the decay assumption \eqref{u0decay} on the gradient of the initial value $u_0$,  we can take a suitable $\delta_0\in(0,1)$, $\eta(x)=(1+|x|)^{-\delta_0}$ such that
\begin{eqnarray}
\label{hidecay}
|h_j(x,0)|\leq C\eta(x).
\end{eqnarray}
Let $g(x,t)=Ke^{\lambda t}\eta(x)$, here $K$ and $\lambda$ will be determined later. For any $0<T'<T$, by the definition of the finite blow up time $T$, we have
\begin{eqnarray}
M_{T'}:=\sup_{\R^n\times(0,T')}|u(x,t)|<\infty.
\end{eqnarray}
Hence, by Lemma \ref{halfeta}, we get
\begin{eqnarray}
\begin{aligned}
L[g]=&\ Ke^{\lambda t}\eta(\lambda-c-p|u|^{p-1})\\
\geq& \ 0,
\end{aligned}
\end{eqnarray}
provided $\lambda=pM^{p-1}_{T'}+c+1$.  By the decay condition \eqref{hidecay},
\begin{eqnarray}
g(x,0)\geq h_j(x,0),\ x\in\R^n,
\end{eqnarray}
if we choose $K=C+1$.
Then the maximum principle (see lemma 4.1 in \cite{JX2}) tells us that
\begin{equation}
h_j(x,t)\leq g(x,t),\ (x,t)\in\R^n\times (0,T'],
\end{equation}
for any $0<T'<T$. Obviously, $-g$ is a sub barrier for $h_j, j=1,\cdots,n$. In conclusion, we have the decay condition
\begin{eqnarray}
\label{CPdecay}
|\nabla u|(x,t)\leq \frac{C(T')}{1+|x|^{\delta_0}},\ (x,t)\in\R^n\times(0,T'],
\end{eqnarray}
for any $0<T'<T$. $C(T')=Ke^{\lambda T'}$ converges to infinite as $T'\to T$.

On the other hand, since $u_0$ is nonnegative, the maximum principle also tells us that $u\geq0$.
Since the decay condition \eqref{CPdecay} and the nonnegativity of $u$ hold, we can use Theorem  \ref{classifyall} to prove Theorem \ref{classifyCP}.
\end{proof}
Finally, we want to show that for some special  initial value problems, the case  $\lim_{t\rightarrow T}(T-t)^{\beta}u(x+y(T-t),t)=0$ in \eqref{3blowup} can be excluded. More precisely, we have the following result.
\begin{proposition}
Let $u_{0}$ be a nontrivial($\not\equiv 0$), nonnegative, radially symmetric function which is also nonincreasing in $|x|$ and satisfies
\begin{eqnarray}
\label{u0decay2}
|\nabla u_0|(x)\leq \frac{C}{1+|x|^{\delta}},
\end{eqnarray}
for some $\delta>0$.
 Let $u(x,t)$ be a finite time blow up solution of the equation \eqref{CP}, then
\begin{equation}\label{final1}
\lim_{t\rightarrow T}(T-t)^{\beta}u(0, t)>0.
\end{equation}
where $T$ is the finite blow up time in the sense of
\begin{eqnarray}
T:=\sup\big\{t>0 : \sup_{(x,t)\in \mathbb{R}^n\times(0,t)}u(x,t)<\infty\big\}.
\end{eqnarray}
\end{proposition}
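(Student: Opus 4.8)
The plan is to deduce \eqref{final1} from Theorem \ref{classifyCP} together with the standard lower bound on the blow-up rate, exploiting that for radially decreasing data the spatial supremum of $u(\cdot,t)$ is attained at the origin. \textbf{Step 1: $u(\cdot,t)$ is nonnegative, radial and nonincreasing in $|x|$.} Since $u_0\ge 0$, the maximum principle for \eqref{CP} gives $u\ge 0$ on $\mathbb{R}^n\times(0,T)$. As $u_0$ is radially symmetric and both $(-\Delta)^{1/2}$ and $s\mapsto|s|^{p-1}s$ commute with rotations, uniqueness for the Cauchy problem forces $u(\cdot,t)$ to be radial for every $t$. For the monotonicity I would work with the Duhamel formula
\[
u(\cdot,t)=P_t*u_0+\int_0^t P_{t-s}*\big(|u(\cdot,s)|^{p-1}u(\cdot,s)\big)\,ds ,
\]
where $P_\tau$ is the half-heat kernel \eqref{halfheatkernel}, which is radially symmetric and strictly decreasing in $|x|$. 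The elementary ingredient is that the convolution of a nonnegative radially nonincreasing function with $P_\tau$ is again radially nonincreasing (write the first function via the layer-cake formula as a superposition of indicators of balls centred at the origin, and use that the convolution of two radially symmetric nonincreasing functions is radially symmetric nonincreasing). Inserting this into the Picard iteration $u^{(k+1)}(\cdot,\sigma)=P_\sigma*u_0+\int_0^\sigma P_{\sigma-s}*(u^{(k)}(\cdot,s))^p\,ds$, which converges to $u$ on a short time interval, an induction shows that each $u^{(k)}(\cdot,\sigma)\ge 0$ is nonincreasing in $|x|$, hence so is $u(\cdot,\sigma)$; continuing the solution then propagates this property to all of $(0,T)$. (A parabolic moving-plane comparison across hyperplanes not passing through the origin is an alternative.) Consequently $u(0,t)=\sup_{\mathbb{R}^n}u(\cdot,t)=\|u(\cdot,t)\|_{L^\infty(\mathbb{R}^n)}$ for every $t\in(0,T)$.

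\textbf{Step 2: lower bound on the blow-up rate.} By the local well-posedness theory for \eqref{CP} (\cite{EK,FiK,GuK}) combined with the scaling invariance $u\mapsto\lambda^{\beta}u(\lambda\cdot,\lambda\cdot)$ of the equation, there is a universal constant $\tau_0>0$ such that any $L^\infty$-solution with $\|u(\cdot,t_0)\|_{L^\infty}=N$ extends at least to time $t_0+\tau_0 N^{-(p-1)}$. Since $T$ is the maximal existence time, this gives $T-t\ge\tau_0\,\|u(\cdot,t)\|_{L^\infty}^{-(p-1)}$ for all $t\in(0,T)$, that is,
\[
(T-t)^{\beta}\,\|u(\cdot,t)\|_{L^\infty(\mathbb{R}^n)}\ge\tau_0^{\beta}>0\qquad\text{for all }t\in(0,T).
\]
Combined with Step 1, this yields $(T-t)^{\beta}u(0,t)\ge\tau_0^{\beta}>0$ on $(0,T)$.

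\textbf{Step 3: conclusion, and the main difficulty.} The initial datum $u_0$ satisfies the hypotheses of Theorem \ref{classifyCP} (and we are in the admissible range $n\le 4$, $1<p<p_*(n)$), so \eqref{3blowup} holds; taking $y=0$ there shows that $\lim_{t\to T}(T-t)^{\beta}u(0,t)$ exists and equals $0$ or $\beta^{\beta}$. By Step 2 this limit is $\ge\tau_0^{\beta}>0$, hence it equals $\beta^{\beta}$, which is exactly \eqref{final1}. I expect Step 1 to be the main obstacle: rigorously propagating radial monotonicity through the \emph{nonlinear} nonlocal flow requires both the convolution lemma for $P_\tau$ and a careful continuation argument up to $T$ (equivalently, a nonlocal parabolic version of the moving-plane method); Steps 2 and 3 are then routine.
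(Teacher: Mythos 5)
Your proposal is correct, and it reaches \eqref{final1} by a genuinely different route for the key step. Both arguments need your Step 1 (radial monotonicity, so that $u(0,t)=\|u(\cdot,t)\|_{L^\infty}$), and both ultimately lean on Theorem \ref{classifyCP} for the existence of the limit and the dichotomy $0$ or $\beta^\beta$; the paper establishes Step 1 by the maximum principle of \cite{JX2} (a reflection argument across hyperplanes through the origin for radial symmetry, then the sign of $h_j=\partial_{x_j}u$ on the half-space $\{x_j>0\}$ for monotonicity), which is closer to your parenthetical moving-plane alternative than to your Picard-iteration-plus-rearrangement route; your convolution lemma for symmetric decreasing functions is classical and the iteration argument works, but it needs the continuation step up to $T$ spelled out, whereas the paper gets the global statement directly from the maximum principle on each $[0,T']$. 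The real divergence is in excluding the value $0$: the paper argues by contradiction, assuming $(T-t)^\beta u(0,t)\to 0$, rescaling to $\hat u$ on $\mathbb{R}^n\times(-1,0)$ with $\hat u<\epsilon(-t)^{-\beta}$, and running the Duhamel formula with Gronwall's inequality twice to show $\hat u$ stays bounded, contradicting blow-up; you instead invoke the scaling-invariant lower bound on the local existence time, $T-t\ge\tau_0\|u(\cdot,t)\|_{L^\infty}^{-(p-1)}$, which is the standard contraction-mapping estimate and is valid here because $P_\tau$ in \eqref{halfheatkernel} is a probability density (so $\|P_\tau*f\|_\infty\le\|f\|_\infty$). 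The two are morally the same lower bound on the blow-up rate, but yours is quantitative and uniform in $t$ and arguably more elementary, while the paper's Gronwall iteration is self-contained given the semigroup representation it already uses; the one point you should make explicit in your Step 2 is that the given classical solution coincides with the mild solution produced by the fixed point (uniqueness of bounded mild solutions), so that the lower bound on the existence time really applies to $u$.
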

\begin{proof}
Since $u_{0}$ is a nonnegative, radially symmetric function which is also nonincreasing in $|x|$ and satisfies \eqref{u0decay2}, it is easy to show that $u(x, t)$ is also nonnegative, radially symmetric function and  nonincreasing. Indeed, for any $e\in\mathbb{S}^n$, let $l$ be the plane of $e\cdot x=0$ which through the origin. For any $x\in\R^n$, denote $x'=x-2(e\cdot x)e$, the mirror image of $x$ among the plane $l$. Let $w(x,t)=u(x,t)-u(x',t)$, for any $T'<T$, we have
\begin{eqnarray*}
\left\{\begin{array}{l}
\partial_t w+(-\Delta)^{\frac{1}{2}}w +b(x,t)w=0, \ (x,t) \in \R^n\times(0,T']\\
w(x, 0)= 0, \ x\in \R^n,
\end{array}
\right.
\end{eqnarray*}
where $b(x,t)$ is bounded on $\R^n\times[0,T']$. Then the maximum principle (see lemma 4.1 in \cite{JX2}) tells us that $w\equiv0$, which means $u$ is symmetric among the plane $l$. Since the arbitrariness of $l$, we prove the radial symmetry of $u$. Next, we consider $h_j=\frac{\partial u}{\partial x_j}, j=1,\cdots, n$. For simplicity, we only consider $j=n$. By the radial symmetry of $u$, $h_n(\tilde{x},x_n,t)=-h_n(\tilde{x},-x_n,t)$ for all $(\tilde{x},x_n,t)\in\R^{n-1}\times\R\times[0,T']$. By \eqref{CPdecay}, we also have
\begin{eqnarray}
\lim_{|x|\to\infty} h_n(x,t)=0, \ \text{for any } \ t\in[0, T'].
\end{eqnarray}
Since $h_n$ satisfies the equation
\begin{eqnarray*}
\left\{\begin{array}{l}
\partial_t h_n+(-\Delta)^{\frac{1}{2}}h_n -p|u|^{p-1}h_n=0, \ (x,t) \in \R^n\times(0,T']\\
h_n(x, 0)\leq0, \ x\in \R^n_+,
\end{array}
\right.
\end{eqnarray*}
where $\R^n_+=\{(\tilde{x},x_n)\in\R^{n-1}\times\R : x_n>0\}$.  Then the maximum principle (see lemma 2.1 in \cite{JX2}) tells us that $h_n\leq0$ on $\R^n_+\times(0,T']$. In general, we have $h_j\leq0$ when $x_j>0$, $j=1,\cdots, n$, which means $u$ is decreasing in $|x|$.

By a contradiction argument, we assume
$$\lim_{t\rightarrow T}(T-t)^{\beta}u(0, t)=0,$$
then for every $\epsilon>0$, there exists a $\delta>0$ such that
$$u(x,t)<\epsilon(T-t)^{-\beta} \quad\text{on }\mathbb{R}^{n}\times(T-\delta, T).$$
Let $\hat{u}(x,t)=\delta^{\beta}u(\delta x, \delta (t+T))$, then $\hat{u}(x, t)$ satisfies
\begin{equation}
\left\{\begin{array}{lll}
\hat{u}_{t}+(-\Delta)^{\frac{1}{2}}\hat{u}-\hat{u}^{p}=0\quad\text{in }\mathbb{R}^{n}\times(-1, 0)\\
\hat{u}(x,t)<\epsilon (-t)^{-\beta}\quad\text{in }\mathbb{R}^{n}\times(-1, 0).
\end{array}
\right.
\end{equation}
The semigroup representation formula for $\hat{u}(x, t)$ gives
\begin{equation}\label{u(x,t)}
\hat{u}(x,t)=\int_{\mathbb{R}^{n}}P(x-y, t+1)\hat{u}(y, -1)dy+\int_{-1}^{t}\int_{\mathbb{R}^{n}}P(x-y, t-s)\hat{u}^p(y,s)dyds.
\end{equation}
Here $P(x,t)$ is the fractional heat kernel defined by \eqref{halfheatkernel}.
By \eqref{u(x,t)}, we know that
\begin{equation}\label{u(x,t)e}
\|\hat{u}(\cdot, t)\|_{L^{\infty}(\mathbb{R}^{n})}\leq \epsilon+\int_{-1}^{t}\epsilon^{p-1}s^{-1}\|\hat{u}(\cdot, t)\|_{L^{\infty}(\mathbb{R}^{n})}ds,
\end{equation}
By Gronwall's inequality, we can get that
\begin{equation}\label{u(x,t)e1}
\begin{aligned}
\|\hat{u}(\cdot, t)\|_{L^{\infty}(\mathbb{R}^{n})}&\leq \epsilon\exp{\int_{-1}^{t}\epsilon^{p-1}s^{-1}ds}\\
&\leq2\epsilon(-t)^{-\epsilon^{p-1}}.
\end{aligned}
\end{equation}
We take \eqref{u(x,t)e1} into \eqref{u(x,t)}, then
\begin{equation}
\|\hat{u}(\cdot, t)\|_{L^{\infty}(\mathbb{R}^{n})}\leq \epsilon+\int_{-1}^{t}(2\epsilon)^{p-1}(-s)^{-(p-1)\epsilon^{p-1}}\|\hat{u}(\cdot, t)\|_{L^{\infty}(\mathbb{R}^{n})}ds.
\end{equation}
Let $\epsilon$ be a constant which is small enough. If we apply Gronwall's inequality again, then we can get that $\hat{u}$ is bounded. It follows that $u(x,t)$ is bounded near $t=T$. Since we have assumed that $u(x,t)$ is a finite time blow up solution, this is a contradiction.
\end{proof}

\section{Appendix: Computation of $c_1M_1$}
For $n=1$, $f_j(y), j=1,2$ defined by \eqref{f12} has an explicit expression. Indeed, recall that $\rho(y)=\frac{1}{1+y^2}$, then
\begin{eqnarray}\label{M11}
  \begin{aligned}
  f_1(y):=&\ \frac{1}{\rho(y)}
  \int_{ B_{|y|}(0)}\frac{(\rho(y')-\rho(y))^2}{|y'-y|^{2}}\frac{1}{\rho(y')}dy'
  \\=&\
  \frac{1}{1+y^2}\int_{ B_{|y|}(0)}\frac{(y'+y)^2}{1+y'^2}dy'
   \\=&\
  \frac{2}{1+y^2}\int_0^{|y|}dy'
  +\frac{2(y^2-1)}{1+y^2}\int_0^{|y|}\frac{dy'}{1+y'^2}
  \\=&\
  \frac{2|y|}{1+y^2}+\frac{2(y^2-1)}{1+y^2}\arctan|y|.
  \end{aligned}
\end{eqnarray}
Similarly,
\begin{eqnarray}\label{M12}
 \begin{aligned}
  f_2(y):=&\ \frac{1}{\rho(y)^2}
  \int_{\mathbb{R}\setminus B_{|y|}(0)}\frac{(\rho(y')-\rho(y))^2}{|y'-y|^{2}}dy'
  \\=&\
  \int_{\mathbb{R}\setminus B_{|y|}(0)}\frac{(y'+y)^2}{(1+y'^2)^2}dy'
  \\=&\
  2\int_{|y|}^\infty\frac{dy'}{1+y'^2}
  +2(y^2-1)\int_{|y|}^\infty\frac{dy'}{(1+y'^2)^2}
   \\=&\
  \pi-2\arctan|y|
  +(y^2-1)\big(\frac{\pi}{2}-\arctan|y|
  -\frac{|y|}{1+y^2}\big).
  \end{aligned}
\end{eqnarray}
Since $c_1=\frac{1}{\pi}$, we are going to prove that $M_1<4\pi$. Since $f_j(y), j=1,2$ are even, we may
assume $y\in[0,+\infty)$. It is not hard to see that
\begin{eqnarray}
 \begin{aligned}
  \frac{2|y|}{1+y^2}\leq&\ 1,\\
  \frac{2(y^2-1)}{1+y^2}\arctan|y|\leq&\ \pi,\\
  \pi-2\arctan|y|\leq&\ \pi.
 \end{aligned}
\end{eqnarray}
Let
\begin{eqnarray}
  f(y)=(y^2-1)\big(\frac{\pi}{2}-\arctan|y|-\frac{|y|}{1+y^2}\big),
\end{eqnarray}
then
\begin{eqnarray}
  f'(y)=2y\big(\frac{\pi}{2}-\arctan|y|-\frac{|y|}{1+y^2}\big)
  -\frac{2(y^2-1)}{(1+y^2)^2}.
\end{eqnarray}
Observe
\begin{eqnarray}
  f(y)<0=f(1),\ \text{for all}\ y\in[0,1),
\end{eqnarray}
and, by the L'H\^opital's rule,
\begin{eqnarray}
  \begin{aligned}
  \lim_{y\rightarrow\infty}f(y)=&\
  \lim_{y\rightarrow\infty}
  \frac{\frac{2}{(1+y^2)^2}}{\frac{2y}{(y^2-1)^2}}
  \\=&\ \lim_{y\rightarrow\infty}\frac{2}{y}=0.
  \end{aligned}
\end{eqnarray}
Then $f$ achieves its maximum at some critical point $y_1\geq1$. $f'(y_1)=0$ implies that
\begin{eqnarray}
  \frac{\pi}{2}-\arctan y_1-\frac{y_1}{1+y_1^2}=\frac{y_1^2-1}{y_1(1+y_1^2)^2}.
\end{eqnarray}
It follows that
\begin{eqnarray}
  f(y_1)=\frac{(y_1^2-1)^2}{y_1(1+y_1^2)^2}<\frac{1}{y_1}\leq1.
\end{eqnarray}
Therefore, we conclude
\begin{eqnarray}
  M_1\leq1+\pi+\pi+1<4\pi.
\end{eqnarray}
In fact, a  numerical calculation shows that $M_1\approx4.8271<4\pi$. As a consequence, $p_*(1)\approx4.2072$.

\section*{Acknowledgements} The research of J. Wei is partially supported by  NSERC of Canada. The research of B. Deng and K. Wu is supported by China Scholar Council. The research of B. Deng is also
supported by Natural Science Foundation of China (No.1172110 and  No.11971137).
We would like to thank D. Gomez for some technical support for numerical computation. We also thank H. Zaag
for pointing out a mistaken statement.


\begin{thebibliography}{99}

    \bibitem{BG} Blumenthal, R. M.; Getoor, R. K. Some theorems on stable processes. {\em Trans. Amer. Math. Soc.} 95 (1960), 263-273.

    \bibitem{BSV} Bonforte, Matteo; Sire, Yannick; V\'azquez, Juan Luis. Optimal existence and uniqueness theory for the fractional heat equation. {\em Nonlinear Anal.} 153 (2017), 142-168.

       \bibitem{CR}  Cabr\'e, Xavier; Roquejoffre, Jean-Michel. The influence of fractional diffusion in Fisher-KPP equations. {\em Comm. Math. Phys.} 320 (2013), no. 3, 679-722.

       \bibitem{CR}Cabr\'e, Xavier; Ros-Oton, Xavier. Sobolev and isoperimetric inequalities with monomial weights. {\em J. Differential Equations} 255 (2013), no. 11, 4312-4336.

\bibitem{CS} Caffarelli, Luis; Silvestre Luis.  An extension problem related to the fractional Laplacian, {\em Comm. Partial Differential Equations}  32 (2007), no. 7-9, 1245-1260.


    \bibitem{CLL} Chen, Wenxiong; Li, Congming; Li, Yan.  A direct method of moving planes
     for the fractional Laplacian. {\em Adv. Math. } 308 (2017), 404-437.

   \bibitem{DDGW} DelaTorre, Azahara; del Pino, Manuel; Gonz\'{a}lez, Mar\'{i}a del Mar; Wei, Juncheng.
Delaunay-type singular solutions for the fractional Yamabe problem. {\em Math. Ann.} 369 (2017), no. 1-2, 597-626.

  \bibitem{EK} Eidelman, Samuil D. ; Kochubei, Anatoly N.  Cauchy problem for fractional diffusion equations. {\em J. Differential Equations.} 99 (2004), 211-255.

   \bibitem{FKa} Felsinger, Matthieu; Kassmann, Moritz. Local regularity for parabolic nonlocal operators. {\em Comm. Partial Differential Equations.} 38 (2013), no. 9, 1539-1573.


  \bibitem{FR} Fern\'andez-Real, Xavier; Ros-Oton, Xavier. Regularity theory for general stable operators: parabolic equations. {\em J. Funct. Anal.}  272 (2017), no. 10, 4165-4221.

    \bibitem{FK}  Filippas, Stathis; Kohn Robert V. Refined asymptotics for the blowup of  $u_t-\Delta u= u^p$.
   {\em Comm. Pure Appl. Math.} 45 (1992), pp. 821-869.

   \bibitem{FiK} Fino, Ahmad; Karch, Grzegorz. Decay of mass for nonlinear equation with fractional Laplacian. {\em Monatsh. Math.}
(2010) 160, 375-384.

    \bibitem{GK}  Giga, Yoshikazu; Kohn, Robert V.  Asymptotically self-similar blow-up of semilinear heat equations.
 {\em Comm. Pure Appl. Math.} 38 (1985), no. 3, 297-319.

    \bibitem{GK1} Giga, Yoshikazu; Kohn, Robert V. Characterizing blowup using similarity variables.
 {\em Indiana Univ. Math. J.} 36 (1987), no. 1, 1-40.

  \bibitem{GK3} Giga, Yoshikazu; Kohn, Robert V. Nondegeneracy of blowup for semilinear heat equations. {\em Comm. Pure Appl. Math.} 42 (1989), no. 6, 845-884.

\bibitem{GuK} Guedda, Mohammed; Kirane, Mokhtar. Criticality for some evolution equations. {\em Differential Equations.} 37 (2001), 511-520.

  \bibitem{JX}   Jin, Tianling; Xiong, Jingang. Schauder estimates for solutions of linear parabolic integro-differential equations. {\em Discrete Contin. Dyn. Syst.} 35 (2015), no. 12, 5977-5998.

  \bibitem{JX2}  Jin, Tianling; Xiong, Jingang. A fractional Yamabe flow and some applications. {\em J. reine angew. Math.} 696 (2014), 187-223.

  \bibitem{K} Kwa\'nicki, Mateusz. Ten equivalent definitions of the fractional Laplace operator. (English summary)
    Fract. {\em Calc. Appl. Anal.} 20 (2017), no. 1, 7-51.

 \bibitem{MZ1}  Merle, Frank; Zaag, Hatem.  Optimal estimates for blowup rate and behavior for nonlinear heat equations. {\em Comm. Pure Appl. Math.} 51 (1998), no. 2, 139-196.

 \bibitem{MZ2}  Merle, Frank; Zaag, Hatem. Refined uniform estimates at blow-up and applications for nonlinear heat equations. {\em  Geom. Funct. Anal. } 8 (1998), no. 6, 1043-1085.

 \bibitem{MZ3}  Merle, Frank; Zaag, Hatem. A Liouville theorem for vector-valued nonlinear heat equations and applications.{\em Math. Ann.}  316 (2000), no. 1, 103-137.

 \bibitem{PQS}  Pol\'{a}\v{c}lik, Peter; Quittner, Pavol; Souplet, Philippe. Singularity and decay estimates in superlinear problems via Liouville-type theorems. II. Parabolic equations. {\em  Indiana Univ. Math. J.} 56 (2007), no. 2, 879-908.

 \bibitem{S1} Souplet, Philippe. A simplified approach to the refined blowup behavior for the nonlinear heat equation. {\em SIAM J. Math. Anal.} 51 (2019), no. 2, 991-1013.

 \bibitem{S}  Sugitani, Sadao. On nonexistence of global solutions for some nonlinear integral equations. {\em Osaka J. Math.}
12 (1975), 45-51.

  \bibitem{QS}   Quittner, Pavol;  Souplet, Philippe. Superlinear parabolic problems. Blow-up, global existence and steady states, Birkhauser/Springer, Cham, (2019).

  \bibitem{W}  Weissler, Fried B.  Existence and non-existence of global solutions for a semi-linear heat equation. {\em Israel J. Math.} 38 (1981), 29-40.
\end{thebibliography}
\end{document}